\documentclass[11pt]{amsart} 
\usepackage{amsmath}
\usepackage[mathscr]{eucal}

\hoffset=-2.3cm
\textwidth=16.5cm

\makeatletter
\newtheorem*{rep@theorem}{\rep@title}
\newcommand{\newreptheorem}[2]{%
\newenvironment{rep#1}[1]{
 \def\rep@title{#2 \ref{##1}}%
 \begin{rep@theorem}}%
 {\end{rep@theorem}}}
\makeatother

\newfam\calfam

\font\calten=eusm10
\font\calseven=eusm7
\font\calfive=eusm5
\textfont\calfam=\calten
\scriptfont\calfam=\calseven
\scriptscriptfont\calfam=\calfive
\def\Cal{\fam=\calfam}

\newtheorem{thm}{Theorem}[section]
\newtheorem{lem}[thm]{Lemma}
\newtheorem{Lem}[thm]{Lemma}
\newtheorem{prop}[thm]{Proposition}
\newtheorem{cor}[thm]{Corollary}
\theoremstyle{remark}
\newtheorem*{rem}{Remark}



\addtocounter{section}{0}
\newreptheorem{thm}{Theorem}










\newcommand{\nmid}{~|\!\!\!^{_{_{\;/}}}~}

\newcommand{\B}{\mathit{b}{\Cal O}}
\renewcommand{\j}{j}
\renewcommand{\P}{\frak p}

\newcommand{\q}{q}

\renewcommand{\u}{u}

\begin{document}

\title[Bounded generation of SL$_2(\Cal O$$_S$)]{Bounded generation of
SL$_2$ over rings of $S$-integers with infinitely many units}

\author[A.~Morgan]{Aleksander V. Morgan}

\author[A.~Rapinchuk]{Andrei S. Rapinchuk}

\author[B.~Sury]{Balasubramanian Sury}

\begin{abstract}
Let $\mathscr{O}$ be the ring of $S$-integers in a number field $k$. We prove that if the group of units $\mathscr{O}^{\times}$ is infinite then every matrix in $\Gamma = \mathrm{SL}_2(\mathscr{O})$ is a product of at most 9 elementary matrices. This essentially completes a long line of research in this direction. As a consequence, we obtain a new proof of the fact that $\Gamma$ is boundedly generated as an abstract group that uses
only standard results from algebraic number theory.
\end{abstract}

\address{Department of Mathematics, University of Virginia,
Charlottesville, VA 22904-4137, USA}

\email{avo2t@eservices.virginia.edu}

\email{asr3x@virginia.edu}

\address{Stat-Math Unit, Indian Statistical Institute,
8th Mile Mysore Road, Bangalore 560059, India}

\email{surybang@gmail.com}
\maketitle

\begin{center}
{\it\large To Alex Lubotzky on his 60th birthday}
\end{center}

\section{Introduction}
Let $k$ be a number field.  Given a finite subset $S$
of the set $V^k$ of valuations of $k$
containing the set $V_{\infty}^k$ of archimedian valuations, we let
$\mathscr{O}_{k , S}$ denote the ring of $S$-integers in $k$, i.e.
$$
\mathscr{O}_{k , S} = \{ a \in k^{\times} \: \vert \: v(a) \geq 0 \
\ \text{for all} \ \ v \in V^k \setminus S \} \cup \{ 0 \}.
$$
As usual, for any commutative ring $R$, we let $\mathrm{SL}_2(R)$ denote the group of unimodular
2 $\times$ 2-matrices over $R$ and refer to the matrices $$
E_{12}(a) = \left( \begin{array}{cc} 1 & a \\
0 & 1 \end{array} \right) \ \ \text{and} \ \ E_{21}(b) = \left(
\begin{array}{cc} 1 & 0 \\ b & 1 \end{array} \right) \ \ \in
\mathrm{SL}_2(R) \ \ \ (a , b \in R) $$
as \emph{elementary} (over $R$).

It was established in \cite{Vas} (see also \cite{Liehl-1}) that if the ring of $S$-integers
$\Cal O = \Cal O$$_{k , S}$ has infinitely many units, the group $\Gamma={\rm SL}_2(\mathscr{O})$
is generated by elementary matrices.  The goal of this paper is to prove that in this case
$\Gamma$ is actually \emph{boundedly} generated by elementaries. More precisely, we prove the following.

\begin{thm}\label{T:Main}
Let $\Cal O=\Cal O$$_{k,S}$ be the ring of $S$-integers in a number field $k$, and assume that
the group of units $\Cal O^\times$ is infinite.  Then every matrix in $\rm SL_2(\Cal O)$ is a
product of at most 9 elementary matrices.
\end{thm}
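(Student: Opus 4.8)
The plan is to reduce everything to elementary row operations — left multiplications by $E_{12}(t)$ and $E_{21}(t)$, which effect $R_1 \mapsto R_1 + tR_2$ and $R_2 \mapsto R_2 + tR_1$ — and to show that every $A = \left(\begin{smallmatrix} a & b \\ c & d\end{smallmatrix}\right) \in \mathrm{SL}_2(\mathcal{O})$ can be carried to an elementary matrix in at most $8$ such operations; since the inverse of an elementary matrix is elementary, this presents $A$ as a product of at most $9$. Two explicit identities do most of the bookkeeping. First, the Weyl‑element identity $E_{12}(u)E_{21}(-u^{-1})E_{12}(u) = \left(\begin{smallmatrix} 0 & u \\ -u^{-1} & 0\end{smallmatrix}\right)$, valid for $u \in \mathcal{O}^{\times}$, exhibits $\operatorname{diag}(u,u^{-1})$ as a product of $5$ elementaries. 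Second — and this is the key trick for keeping the count low — a matrix with a unit in its $(2,1)$‑entry is a product of only $3$ elementaries: one operation turns the $(1,1)$‑entry into $1$, a second clears the $(2,1)$‑entry, and the determinant condition forces what remains to be $E_{12}(\ast)$. With these in hand, the cases $c = 0$ (so $A$ is upper triangular with a unit on the diagonal, hence a product of $6$ elementaries) and $c \in \mathcal{O}^{\times}$ (a product of $3$) already fit the budget, so from now on I would assume $c$ is a nonzero non‑unit; since $\det A = 1$, this forces $a\mathcal{O} + c\mathcal{O} = \mathcal{O}$.

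The heart of the argument is a two‑step reduction of the first column $(a,c)^{\mathsf{T}}$. First I would invoke Dirichlet's theorem on prime ideals in arithmetic progressions over the number field $k$ to choose $t \in \mathcal{O}$ so that $\pi := a + tc$ generates a prime ideal of $\mathcal{O}$ outside $S$; then $R_1 \mapsto R_1 + tR_2$ replaces $a$ by $\pi$ (one elementary spent) and leaves $c$ untouched. Passing to a prime is useful because $\mathcal{O}/\pi\mathcal{O}$ is a field, so $\bar c \in (\mathcal{O}/\pi\mathcal{O})^{\times}$, and I would want to arrange in addition that $\bar c$ lies in the image of the reduction map $\mathcal{O}^{\times} \to (\mathcal{O}/\pi\mathcal{O})^{\times}$. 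Granting that, pick $u \in \mathcal{O}^{\times}$ with $u \equiv c \pmod{\pi}$ and apply $R_2 \mapsto R_2 + \lambda R_1$ with $\lambda = (u - c)/\pi \in \mathcal{O}$ (a second elementary spent): the $(2,1)$‑entry becomes the unit $u$, and the $3$‑elementary identity finishes the job. In the favourable case this even gives a product of $5$ elementaries; the remaining slack up to $9$ is what is needed when the number theory yields only a weaker conclusion — for instance when $\bar c$ can be brought into $\operatorname{im}(\mathcal{O}^{\times})$ only after a further Dirichlet step replacing $c$ itself by a prime, or only up to a bounded correction that costs extra operations to clear. Tracking this case distinction is routine once the existence statement is in hand.

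The main obstacle is precisely that existence statement: producing a prime $\pi \equiv a \pmod{c}$ such that the fixed non‑unit $c$ is a unit‑residue modulo $\pi$. My plan is to combine Dirichlet's theorem (supplying the congruence $\pi \equiv a \pmod{c}$) with an analysis of how $\mathcal{O}^{\times}$ reduces modulo primes. Because $\mathcal{O}^{\times}$ is infinite — equivalently $|S| \ge 2$, by Dirichlet's unit theorem — it contains a unit of infinite order, and one expects the primes modulo which a given non‑unit is a unit‑residue to form a set of positive density; I would prove this by a Chebotarev/Kummer‑theory computation in the fields $k\bigl(\zeta_{\ell},\sqrt[\ell]{\mathcal{O}^{\times}},\sqrt[\ell]{c}\,\bigr)$, the non‑unit hypothesis on $c$ being what prevents the density from collapsing. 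The genuine crux is that both conditions involve the prime divisors of $c$, so the relevant abelian and Kummer extensions need not be linearly disjoint; reconciling them — by enlarging the modulus, by passing through an auxiliary prime, or by settling for $\bar c \in \operatorname{im}(\mathcal{O}^{\times})\cdot\langle\bar\pi\rangle$ rather than $\bar c \in \operatorname{im}(\mathcal{O}^{\times})$ — is where the argument must be handled with real care, and it is this reconciliation that accounts for the gap between the naïve bound of $5$ and the stated bound of $9$.
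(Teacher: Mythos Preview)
Your outline correctly identifies the overall shape of the argument --- reduce one entry of the column to a prime $\pi$ by Dirichlet, then arrange that the other entry becomes a unit modulo $\pi$ --- and this is indeed the skeleton of the Cooke--Weinberger approach. But there is a genuine gap at the step you yourself flag as ``the main obstacle'': producing a prime $\pi$ in a prescribed arithmetic progression such that a \emph{given} non-unit $c$ lies in the image of $\mathcal{O}^{\times}$ modulo $\pi$.

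The difficulty is not the disjointness of a few Kummer extensions; it is that the condition ``$\bar c \in \langle \bar u\rangle$'' in the cyclic group $(\mathcal{O}/\pi)^{\times}$ unpacks into: for every prime $\ell$ dividing $\phi(\pi)$, the $\ell$-part of the order of $\bar c$ divides the $\ell$-part of the order of $\bar u$. This is a condition for \emph{each} prime $\ell$ that might divide $\phi(\pi)$, and there are infinitely many candidates for $\ell$ as $\pi$ varies. Controlling all of them simultaneously by Chebotarev in the towers $k(\zeta_{\ell},\sqrt[\ell]{u},\sqrt[\ell]{c})$ is exactly Artin's primitive-root problem in disguise: one needs an infinite sieve, and the known unconditional methods do not give positive density here. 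This is precisely why Cooke--Weinberger assumed GRH. Your proposed fixes (enlarging the modulus, an auxiliary prime, allowing $\langle\bar\pi\rangle$) do not address this point, because none of them bounds the set of relevant $\ell$.

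The paper gets around this by never attempting to make an arbitrary $c$ a unit-residue. Instead it spends three preparatory moves (Lemma~4.4, via the power-reciprocity law) to replace $c$ by a $\mu$-th power $q^{\mu}$ with $\mathfrak{q}$ a $\mathbb{Q}$-split prime; and it chooses $\pi$ (Theorem~3.7) so that, in addition to a Dirichlet-type congruence, (i) for a \emph{fixed finite} set $P$ of primes the $p$-part of $\phi(\mathfrak{p})/\mu$ divides the order of $\bar u$, and (ii) $\gcd(\phi(\mathfrak{p}),\lambda')=\lambda$, which forces $\phi(\mathfrak{p})/\lambda$ to be coprime to a second finite set $Q$. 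Conditions (i)--(ii) together reduce the ``$\bar c\in\langle\bar u\rangle$'' problem to finitely many primes $\ell$, so a single Chebotarev application (over a carefully built non-abelian extension, using Proposition~2.1 on abelian subextensions of radical extensions) suffices. The use of $\mathbb{Q}$-split primes is essential to keep $(\mathcal{O}/\mathfrak{p}^n)^{\times}$ cyclic and to make the level argument (Lemma~3.2) work. These ingredients --- the reciprocity preprocessing, the restriction to $\mathbb{Q}$-split primes, and the finite-$\ell$ reduction in Theorem~3.7 --- are the substance of the proof and are absent from your sketch.
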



The quest to validate the property that every element of $\mathrm{SL}_2(\Cal O)$ is a product of
a bounded number of elementary matrices has a considerable history.  First, G.~Cooke and
P.~J.~Weinberger \cite{CW} established it (with the same bound as in Theorem 1.1) assuming
the truth of a suitable form of the Generalized Riemann Hypothesis, which still remains unproven.
Later, it was shown in \cite{LM} (see also \cite{M}) by analytic tools
that the argument can be made unconditional if $| S | \geq \max(5 , 2[k : \Bbb Q]-3)$.
On the other hand, B.~Liehl \cite{Liehl-2} proved the result by algebraic methods for some special
fields $k$.  The first unconditional proof in full generality was given by D.~Carter, G.~Keller
and E.~Paige in an unpublished preprint; their argument was streamlined and made available to the
public by D.~W.~Morris \cite{DWM}.  This argument is based on model theory and provides no
explicit bound on the number of elementaries required; besides, it uses difficult results from
additive number theory.

In \cite{Vs}, M.~Vsemirnov proved Theorem 1.1 for $\mathscr{O} = \mathbb{Z}[1/p]$ using
the results of D.~R.~Heath-Brown \cite{H-B} on Artin's Primitive Root Conjecture (thus,
in a broad sense, this proof develops the initial approach of Cooke and Weinberger \cite{CW});
his bound on the number of elementaries required is $\leq 5$.  Subsequently, the third-named
author re-worked the argument from \cite{Vs} to avoid the use of \cite{H-B} in an unpublished
note.  These notes were the beginning of the work of the first two authors that has eventually
led to a proof of Theorem 1.1 in the general case.  It should be noted that our proof uses only
standard results from number theory such as Artin reciprocity and Chebotarev's Density Theorem,
and is relatively short and constructive with an explicit
bound which is independent of the field $k$ and the set $S$.  This, in particular, implies that
Theorem 1.1 remains valid for any {\it infinite} $S$.

The problem of bounded generation (particularly by elementaries) has been considered for
$S$-arithmetic subgroups of algebraic groups other than $\mathrm{SL}_2$.  A few years after
\cite{CW}, Carter and Keller \cite{CK} showed that ${\rm SL}_n(\mathscr{O})$ for $n \geq 3$
is boundedly generated by elementaries for any ring $\Cal O$ of algebraic integers (see \cite{T}
for other Chevalley groups of rank $>1$, and \cite{ER} for isotropic, but nonsplit (or
quasi-split), orthogonal groups).  The upper bound on the number of factors required to write
every matrix in $\rm SL$$_n(\Cal O)$ as a product of elementaries given in \cite{CK} is
$\frac{1}{2}(3n^2-n)+68\Delta-1$, where $\Delta$ is the number of prime divisors of the
discriminant of $k$; in particular, this estimate depends on the field $k$.  Using our
Theorem \ref{T:Main}, one shows in all cases where the group of units $\Cal O^\times$ is infinite,
this estimate can be improved to $\frac{1}{2}(3n^2-n)+4$, hence made independent of $k$ --
see Corollary \ref{T:Ma}.
The situation not covered by this result are when $\Cal O$ is either $\Bbb Z$ or the ring of
integers in an imaginary quadratic field -- see below.  The former case was treated in \cite{Z}
with an estimate $\frac{1}{2}(3n^2-n)+36$, so only in the case of imaginary quadratic fields
the question of the existence of a bound on the number of elementaries independent of the $k$
remains open.

From a more general perspective, Theorem \ref{T:Main} should be viewed as a contribution
to the sustained effort aimed at proving that all higher rank lattices are boundedly generated
as abstract groups.  We recall that a group $\Gamma$ is said to have {\it bounded generation}
(BG) if there exist elements $\gamma_1, \ldots , \gamma_d \in \Gamma$ such that $$
\Gamma = \langle \gamma_1 \rangle \cdots \langle \gamma_d \rangle,
$$
where $\langle \gamma_i \rangle$ denotes the cyclic subgroup generated by $\gamma_i$.
The interest in this property stems from the fact that while being purely combinatorial in nature,
it is known to have a number of far-reaching consequences for the structure and representations
of a group, particularly if the latter is $S$-arithmetic.  For example, under one additional
(necessary) technical assumption, (BG) implies the rigidity of completely reducible complex
representations of $\Gamma$ (known as $SS$-rigidity) -- see \cite{R1}, \cite[Appendix A]{PR-book}.
Furthermore, if $\Gamma$ is an $S$-arithmetic subgroup of an absolutely simple simply connected
algebraic group $G$ over a number field $k$, then assuming the truth of the Margulis-Platonov
conjecture for the group $G(k)$ of $k$-rational points (cf.~\cite[\S 9.1]{PR-book}),
(BG) implies the {\it congruence subgroup property} (i.e. the finiteness of the corresponding
congruence kernel -- see \cite{Lub}, \cite{PR}).  For applications of (BG) to the
Margulis-Zimmer conjecture, see \cite{ShalWill}.  Given these and other implications of (BG),
we would like to point out the following consequence of Theorem \ref{T:Main}.

\vskip2mm

\begin{cor}\label{c1}
Let $\mathscr{O} = \mathscr{O}_{k , S}$ be the ring of $S$-integers,
in a number field $k$.  If the group of units $\mathscr{O}^{\times}$ is infinite, then the group
$\Gamma = \mathrm{SL}_2(\mathscr{O})$ has bounded generation.
\end{cor}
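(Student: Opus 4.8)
The plan is to derive bounded generation of $\Gamma = \mathrm{SL}_2(\mathscr{O})$ directly from Theorem \ref{T:Main} together with the hypothesis that $\mathscr{O}^{\times}$ is infinite. By Theorem \ref{T:Main}, every matrix $g \in \Gamma$ can be written as $g = E_{i_1 j_1}(a_1) \cdots E_{i_m j_m}(a_m)$ with $m \leq 9$, where each factor is an elementary matrix $E_{12}(a)$ or $E_{21}(b)$ with $a, b \in \mathscr{O}$. Thus it suffices to show that the set of all elementary matrices is contained in a product of boundedly many cyclic subgroups of $\Gamma$; concatenating nine such expressions then exhibits $\Gamma$ itself as a product of boundedly many cyclic subgroups, which is exactly property (BG).

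The key step is therefore to bound the ``cyclic width'' of the two elementary subgroups $U^{+} = \{ E_{12}(a) : a \in \mathscr{O} \}$ and $U^{-} = \{ E_{21}(b) : b \in \mathscr{O} \}$, each of which is isomorphic to the additive group $(\mathscr{O}, +)$. First I would recall that, since $\mathscr{O}$ is a finitely generated $\mathbb{Z}$-module (a standard fact about rings of $S$-integers), the additive group $(\mathscr{O}, +)$ is finitely generated, say $\mathscr{O} = \mathbb{Z}\omega_1 + \cdots + \mathbb{Z}\omega_r$. Then every $E_{12}(a)$ is a product of at most $r$ elements from the cyclic subgroups $\langle E_{12}(\omega_1) \rangle, \ldots, \langle E_{12}(\omega_r) \rangle$, because $E_{12}(n_1\omega_1 + \cdots + n_r\omega_r) = E_{12}(\omega_1)^{n_1} \cdots E_{12}(\omega_r)^{n_r}$. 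The same applies to $U^{-}$. Hence $U^{+}$ and $U^{-}$ are each a product of $r$ cyclic subgroups, and consequently every product of $9$ elementary matrices lies in a product of $9r$ cyclic subgroups of $\Gamma$.

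Putting this together: with $\gamma_1, \ldots, \gamma_{9r}$ an appropriate list consisting of $9$ consecutive blocks, each block being $E_{12}(\omega_1), \ldots, E_{12}(\omega_r)$ or $E_{21}(\omega_1), \ldots, E_{21}(\omega_r)$ as dictated by the pattern of the longest word, one checks that $\Gamma = \langle \gamma_1 \rangle \cdots \langle \gamma_{9r} \rangle$; more carefully, since a product of $\leq 9$ elementaries may alternate between $U^{+}$ and $U^{-}$ in any of finitely many patterns, one takes the union of the finitely many lists obtained from all sign patterns of length $9$, which still gives a fixed finite list of generators and a fixed bound on the number of cyclic factors. This establishes (BG). I do not anticipate any genuine obstacle here: the only inputs are Theorem \ref{T:Main} and the elementary observation that $(\mathscr{O},+)$ is finitely generated, so the mild bookkeeping over sign patterns is the only thing requiring a little care. (Note in particular that the hypothesis $|\mathscr{O}^{\times}| = \infty$ enters only through Theorem \ref{T:Main}, which is where it is really needed.)
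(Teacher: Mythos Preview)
Your argument has a genuine gap: the claim that $(\mathscr{O},+)$ is a finitely generated $\mathbb{Z}$-module is false whenever $S$ properly contains $V_\infty^k$. For example, $\mathbb{Z}[1/p]$ is not finitely generated as an abelian group, since any finite generating set would have bounded $p$-power denominators. What is true is that $\mathscr{O}$ is a finitely generated $\mathbb{Z}$-\emph{algebra}, but this does not bound the number of cyclic factors needed for $U^\pm$ using elementary matrices alone. In fact the paper notes (in the Remark following the proof) that when $S \neq V_\infty^k$ there is \emph{no} factorization $\Gamma = \langle\gamma_1\rangle\cdots\langle\gamma_d\rangle$ with all $\gamma_i$ unipotent, which is precisely what your construction would produce; so it is the approach, not merely its justification, that fails in this case.

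The paper's fix is to introduce a semisimple element. One chooses $t \in \mathscr{O}^\times$ with $v(t)=h_k$ for each $v\in S\setminus V_\infty^k$, so that $\mathscr{O}=\mathscr{O}_k[1/t]$, and sets $h=\left(\begin{smallmatrix} t & 0\\ 0 & t^{-1}\end{smallmatrix}\right)\in\Gamma$. Conjugation by powers of $h$ rescales the off-diagonal entry by powers of $t^{\pm 2}$, which clears denominators and yields $U^\pm \subset \langle h\rangle\, e_\pm(\mathscr{O}_k)\,\langle h\rangle$. Now $\mathscr{O}_k$ \emph{is} a free $\mathbb{Z}$-module of rank $n=[k:\mathbb{Q}]$, so $e_\pm(\mathscr{O}_k)$ is a product of $n$ cyclic subgroups exactly as in your argument, and $U^\pm$ lands inside a product of $n+2$ cyclic subgroups. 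Combined with the fixed alternating pattern $U^-U^+\cdots U^-$ (nine factors) coming from Theorem~\ref{T:Main}, this gives the explicit bound $9n+10$. In short, your argument is essentially the paper's in the special case $S=V_\infty^k$; the general case requires this additional diagonal trick.
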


We note that combining this fact with the results of \cite{Lub}, \cite{PR},
one obtains an alternative proof of the centrality of the congruence kernel for
$\mathrm{SL}_2(\mathscr{O})$ (provided that $\mathscr{O}^{\times}$ is infinite),
originally established by J.-P.~Serre \cite{Serre}.  We also note that (BG) of $\rm SL_2(\Cal O)$
is needed to prove (BG) for some other groups -- cf.~\cite{T} and \cite{ER}.

Next, it should be pointed out  that the assumption that the unit group $\mathscr{O}^{\times}$
is infinite is {\it necessary} for the bounded generation of $\mathrm{SL}_2(\mathscr{O})$,
hence cannot be omitted.  Indeed, it follows from Dirichlet's Unit Theorem \cite[\S 2.18]{CF} that
$\mathscr{O}^{\times}$ is finite only when $|S| = 1$ which happens precisely when $S$ is the set
of archimedian valuations in the following two cases:

1) $k = \Bbb Q$ and $\mathscr{O} = \Bbb Z$.  In this case, the group $\mathrm{SL}_2(\Bbb Z)$ is
generated by the elementaries, but has a nonabelian free subgroup of finite index, which prevents
it from having bounded generation.

2) $k = \Bbb Q(\sqrt{-d})$ for some square-free integer $d\geq 1$, and $\mathscr{O}_d$ is
the ring of algebraic integers integers in $k$.
According to \cite{GS}, the group $\Gamma=\mathrm{SL}_2(\mathscr{O}_d)$ has a
finite index subgroup that admits an epimorphism onto a nonabelian
free group, hence again cannot possibly be boundedly generated.
Moreover, P.~M.~Cohn \cite{C} shows that if $d \notin \{1, 2, 3, 7, 11\}$ then $\Gamma$
is not even generated by elementary matrices.

The structure of the paper is the following. In \S 2 we prove an algebraic result about abelian
subextensions of radical extensions of general field -- see Proposition \ref{P:PPP2}.
This statement, which may be of independent interest, is used in the paper to prove
Theorem \ref{T:Key}.  This theorem is one of the number-theoretic results needed in the proof of
Theorem \ref{T:Main}, and it is established in \S 3 along with some other facts from
algebraic number theory.  One of the key notions in the paper is that of $\mathbb{Q}$-split prime:
we say that a prime $\mathfrak{p}$ of a number field $k$ is $\Bbb Q$-split if it is non-dyadic and
its local degree over the corresponding rational prime is 1.  In \S 3,
we establish some relevant for us properties of such primes (see \S 3.1) and
prove for them in \S 3.2 the following (known - see the remark in \S \ref{S:ANT}) refinement of Dirichlet's Theorem from \cite{BMS}.

\vskip2mm

\begin{repthm}{dit}
Let $\mathscr{O}$ be the ring of $S$-integers in a number field $k$ for some finite
$S \subset V^k$ containing $V_{\infty}^k$. If nonzero $a, b \in \mathscr{O}$ are
relatively prime (i.e., $a\mathscr{O} + b\mathscr{O} = \mathscr{O}$) then there exist
infinitely many principal $\Bbb Q$-split prime ideals $\mathfrak{p}$ of $\mathscr{O}$
with a generator $\pi$ such that $\pi \equiv a$ $(\mathrm{mod}\: b\mathscr{O})$ and
$\pi > 0$ in all real completions of $k$.
\end{repthm}

\vskip2mm

\noindent Subsection  3.3 is devoted to the statement and proof of Theorem \ref{T:Key},
which is another key number-theoretic result needed in the proof of Theorem 1.1.
In \S 4, we prove Theorem \ref{T:Main} and Corollary \ref{c1}.  Finally, in \S5
we correct the faulty example from [Vs] of a matrix in $\mathrm{SL}_2(\Bbb Z[1/p])$,
where $p$ is a prime $\equiv 1(\mathrm{mod}\: 29)$, that is not a product of four elementary
matrices -- see Proposition 5.1, confirming thereby that the bound of 5 in [Vs] is optimal.

\vskip2mm

\noindent {\bf Notations and conventions.} For a field $k$, we let $k^{\mathrm{ab}}$ denote
the maximal abelian extension of $k$.  Furthermore, $\mu(k)$ will denote the group of
all roots of unity in $k$; if $\mu(k)$ is finite, we let $\mu$ denote its order.  For $n \geq 1$
prime to $\mathrm{char}\: k$, we let $\zeta_n$ denote a primitive $n$-th root of unity.

In this paper, with the exception of \S2, the field $k$ will be a field of algebraic numbers
(i.e., a finite extension of $\mathbb{Q}$), in which case $\mu(k)$ is automatically finite.
We let $\Cal O$$_k$ denote the ring of algebraic integers in $k$.  Furthermore, we let
$V^k$ denote the set of (the equivalence classes of) nontrivial valuations of $k$, and
let $V^k_{\infty}$ and $V^k_{f}$ denote the subsets of archimedean and nonarchimedean valuations,
respectively.  For any $v \in V^k$, we let $k_v$ denote the corresponding completion;
if $v \in V^k_f$ then $\mathscr{O}_v$ will denote the valuation ring in $k_v$ with
the valuation ideal $\hat{\mathfrak{p}}_v$ and the group of units $U_v = \mathscr{O}_v^{\times}$.

Throughout the paper, $S$ will denote a fixed finite subset of $V^k$ containing $V^k_{\infty}$,
and $\mathscr{O} = \mathscr{O}_{k , S}$ the corresponding ring of $S$-integers (see above).
Then the nonzero prime ideals of $\mathscr{O}$ are in a natural bijective correspondence
with the valuations in $V^k \setminus S$.  So, for a nonzero prime ideal
$\mathfrak{p} \subset \mathscr{O}$ we let $v_{\mathfrak{p}} \in V^k \setminus S$ denote
the corresponding valuation, and conversely, for a valuation $v \in V^k \setminus S$ we let
$\mathfrak{p}_v \subset \mathscr{O}$ denote the corresponding prime ideal (note that
$\mathfrak{p}_v = \mathscr{O} \cap \hat{\mathfrak{p}}_v$).  Generalizing
Euler's $\varphi$-function, for a nonzero ideal $\mathfrak{a}$ of $\mathscr{O}$, we set $$
\phi(\mathfrak{a}) = \vert (\mathscr{O}/\mathfrak{a})^{\times} \vert.
$$
For simplicity of notation, for an element $a\in\Cal O$, $\phi(a)$ will
always mean $\phi(a\Cal O)$.  Finally, for $a \in k^\times$, we let $V(a)=\{\, v \in V^k_f\ \vert \ v(a) \neq 0 \, \}$.

Given a prime number $p$, one can write any integer $n$ in the form $n=p^e\cdot m$,
for some non-negative integer $e$, where $p\nmid m$.  We then call $p^e$ the
\emph{$p$-primary component} of $n$.

\section{Abelian subextensions of radical extensions.}
\renewcommand{\r}{p}
In this section, $k$ is an arbitrary field.  For a prime $\r\neq$ char $k$, we let $\mu(k)_\r$
denote the subgroup of $\mu(k)$, consisting of elements satisfying $x^{\r^d} = 1$ for some
$d\geq 0$.  If this subgroup is finite, we set $\lambda(k)_\r$ to be the non-negative integer
satisfying $\vert \mu(k)_\r \vert = \r^{\lambda(k)_\r}$; otherwise, set $\lambda(k)_\r=\infty$.
Clearly if $\mu(k)$ is finite, then $\mu = \prod_\r \r^{\lambda(k)_\r}$.
For $a\in k^\times$, we write $\sqrt[n]{a}$ to denote an arbitrary root of the polynomial $x^n-a$.

The goal of this section is to prove the following.

\begin{prop}\label{P:PPP2}
Let $n \geq 1$ be an integer prime to $\rm char$ $k$, and let $u \in k^{\times}$ be such that
$u \notin \mu(k)_\r {k^{\times}}^\r$ for all $\r$ $|$ $n$.  Then the polynomial $x^n-u$ is
irreducible over $k$, and for $t=\sqrt[n]{\u}$ we have
$$
k(t)\cap k^{\mathrm{ab}} = k(t^m)\ \ \text{where} \ \
m=\frac{n}{\displaystyle\prod_{\r\mid n}{\rm gcd}(n,\r^{\lambda(k)_\r})},
$$
with the convention that ${\rm gcd}(n, \r^\infty)$ is simply the $\r$-primary component of $n$.
\end{prop}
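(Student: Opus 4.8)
Here is how I would approach the proof.

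The plan is to deduce irreducibility from the classical Vahlen--Capelli criterion for binomials, and then to establish the two inclusions in $k(t)\cap k^{\mathrm{ab}}=k(t^m)$ separately: ``$\supseteq$'' by elementary Kummer theory, and ``$\subseteq$'' by reducing it to a statement about cyclic subextensions of the splitting field of $x^n-u$, together with a non-abelianness lemma. For irreducibility: by Vahlen--Capelli, $x^n-u$ is irreducible over $k$ provided $u\notin{k^{\times}}^{p}$ for every prime $p\mid n$ and, in case $4\mid n$, $u\notin -4\,{k^{\times}}^{4}$. The first condition holds since ${k^{\times}}^{p}\subseteq\mu(k)_p\,{k^{\times}}^{p}$. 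For the second, if $u=-4b^4$ with $b\in k^{\times}$ then $u=(-1)(2b^2)^2\in\mu(k)_2\,{k^{\times}}^{2}$ --- note $\mathrm{char}\,k\ne 2$ because $2\mid n$, so $-1\in\mu(k)_2$ --- contradicting the hypothesis. Hence $[k(t):k]=n$, and by the same criterion $x^{n/d}-u$ is irreducible for every $d\mid n$, so $[k(t):k(t^d)]=d$.

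For ``$\supseteq$'': writing $p^{e_p}$ for the $p$-primary component of $n$, one has $n/m=\prod_{p\mid n}p^{\min(e_p,\lambda(k)_p)}$, hence $\mu_{n/m}\subseteq k$, since $\min(e_p,\lambda(k)_p)\le\lambda(k)_p$ and $\zeta_{p^{\lambda(k)_p}}\in k$. As $(t^m)^{n/m}=u$, the field $k(t^m)$ is the splitting field of $x^{n/m}-u$ over $k$, and $\sigma\mapsto\sigma(t^m)/t^m$ is an injective homomorphism $\mathrm{Gal}(k(t^m)/k)\to\mu_{n/m}$; thus $k(t^m)/k$ is cyclic, in particular abelian. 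Now let $\mathcal{D}=\{\,d\mid n:\ k(t^d)/k\ \text{is abelian}\,\}$. It contains $m$; it is closed upward (if $d\mid d'$ then $k(t^{d'})\subseteq k(t^d)$); and it is closed under greatest common divisors, since $k(t^{\gcd(d_1,d_2)})=k(t^{d_1})\,k(t^{d_2})$ is a compositum of two abelian extensions of $k$. So $\mathcal{D}$ has a least element $d_0$, and $d_0\mid m$.

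For ``$\subseteq$'', I claim the maximal abelian subextension of $k(t)/k$ equals $k(t^{d_0})$. Pass to the splitting field $N=k(t,\zeta_n)$, put $G=\mathrm{Gal}(N/k)$, and for $d\mid n$ let $G_d\le G$ be the stabiliser of $t^d$, so that $k(t^d)=N^{G_d}$ and $k(t)\cap k^{\mathrm{ab}}=N^{G_0[G,G]}$. Because $x^n-u$ is irreducible, $G$ acts transitively on the $n$ roots $\zeta_n^j t$, so $\sigma\mapsto\sigma(t)/t$ maps $G$ onto $\mu_n$ and $G_d/G_0$ is cyclic of order $d$. Since $k(t^{d_0})/k$ is abelian we have $[G,G]\subseteq G_{d_0}$, so $G_0[G,G]$ lies between $G_0$ and $G_{d_0}$; were it a proper subgroup of $G_{d_0}$ it would be contained in the unique index-$p$ subgroup of the cyclic group $G_{d_0}/G_0$ for some prime $p\mid d_0$, which is exactly $G_{d_0/p}/G_0$, forcing $[G,G]\subseteq G_{d_0/p}$ and hence $k(t^{d_0/p})/k$ abelian --- contradicting the minimality of $d_0$. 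Therefore $k(t)\cap k^{\mathrm{ab}}=k(t^{d_0})$.

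It remains to show $d_0=m$; this is the crux of the argument and the only step using the full hypothesis. Suppose $d_0\ne m$ and choose a prime $p$ dividing $m/d_0$; then $d_0\mid m/p$, so $k(t^{m/p})/k$ is abelian, hence so is its subextension $k(\beta)/k$ with $\beta=\sqrt[p^{\lambda+1}]{u}$ and $\lambda:=\lambda(k)_p$ (which is finite, since $p\mid m$). Here $[k(\beta):k]=p^{\lambda+1}$, the degree-$p$ subextension $k(\sqrt[p]{u})/k$ is cyclic so $\zeta_p\in k$, and $F:=k(\zeta_{p^{\lambda+1}})$ is cyclic of degree $p$ over $k$ --- its Galois group embeds into the order-$p$ group $1+p^{\lambda}\mathbb{Z}/p^{\lambda+1}$ and is nontrivial because $\zeta_{p^{\lambda+1}}\notin k$. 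Over $F\supseteq\mu_{p^{\lambda+1}}$ the Kummer extension $F(\beta)/F$ has degree $p^{\lambda}$, which (using $\zeta_{p^{\lambda+1}}\in F$) forces $u=w^p$ for some $w\in F^{\times}$; since $u\in k$ we get $\tau(w)/w\in\mu_p$ for a generator $\tau$ of $\mathrm{Gal}(F/k)$, and replacing $w$ by $w\,\zeta_{p^{\lambda+1}}^{-j}$ for a suitable $j$ modulo $p$ produces an element of $k$, whence $u=w^p\in{k^{\times}}^{p}\cdot\mu(k)_p$ --- contradicting the hypothesis. Thus $d_0=m$ and $k(t)\cap k^{\mathrm{ab}}=k(t^m)$, as required. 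The only delicate point is the structure of $(\mathbb{Z}/p^{\lambda+1})^{\times}$ when $p=2$, but $\mathrm{Gal}(k(\zeta_{p^{\lambda+1}})/k)$ is still cyclic of order $p$ there (as $\lambda\ge 1$), so the descent is uniform in $p$.
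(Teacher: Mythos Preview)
Your argument has a genuine gap in the step where you claim $k(t)\cap k^{\mathrm{ab}}=k(t^{d_0})$. You assert that $G_{d_0}/G_0$ is ``the cyclic group'' of order $d_0$, but for this quotient even to be a group one needs $G_0\trianglelefteq G_{d_0}$, which is equivalent to $k(t)/k(t^{d_0})$ being Galois, i.e.\ to $\zeta_{d_0}\in k(t)$. Nothing in the hypotheses guarantees this. For a concrete failure take $k=\mathbb{Q}$, $u=2$, $n=3$: here $\lambda(k)_3=0$, so $m=3$, and indeed $d_0=3$; but $G\cong S_3$, $G_0$ is a non-normal subgroup of order~$2$, and $G_{d_0}/G_0$ is merely a three-element coset space with no group structure. (In this example the conclusion $G_0[G,G]=G_{d_0}$ happens to hold, but your justification for it does not.) The map $\sigma\mapsto\sigma(t)/t$ is only a $1$-cocycle, not a homomorphism, so the bijection $G_{d_0}/G_0\to\mu_{d_0}$ it induces carries no group structure, and there is no reason a normal subgroup $H$ with $G_0\subseteq H\subsetneq G_{d_0}$ should coincide with some $G_{d_0/p}$.

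What is missing is exactly the fact that the maximal abelian subextension of $k(t)/k$ has the form $k(t^d)$ for some $d\mid n$. The paper obtains this by first treating the prime-power case $n=p^a$ and proving (Lemma~2.5) that \emph{every} intermediate field of $k(\sqrt[p^a]{u})/k$ equals some $k(\sqrt[p^j]{u})$; the proof uses the norm $N_{k(t)/\ell}(t)=\zeta_0\,t^{p^j}$ together with the key fact $\mu\bigl(k(\sqrt[p^a]{u})\bigr)_p=\mu(k)_p$ (Lemma~2.3 and Corollary~2.4). With that lemma available your remaining steps would go through --- in particular your Kummer-descent argument showing $d_0=m$ is correct and rather elegant --- but as written the crucial containment $k(t)\cap k^{\mathrm{ab}}\subseteq k(t^{m})$ is not established. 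Your irreducibility argument and the inclusion $k(t^m)\subseteq k(t)\cap k^{\mathrm{ab}}$ are fine.
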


\medskip

We first treat the case $n = \r^d$ where $\r$ is a prime.

\begin{prop}\label{P:PPP1}
Let $p$ be a prime number $\neq$ {\rm char} $k$, and let $u\in
k^\times\setminus\mu(k)_p(k^\times)^p$.  Fix an integer $d \geq 1$, set $t = \sqrt[\r^d]{u}$. Then
$$
k(t)\cap k^{\mathrm{ab}} = k(t^{\r^{\gamma}}) \ \ \text{where} \ \ \gamma=\max(0,d-\lambda(k)_\r).
$$
\end{prop}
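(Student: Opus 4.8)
The plan is to analyze the extension $k \subset k(t)$ with $t = \sqrt[p^d]{u}$ by decomposing it through the tower $k \subset k(\zeta_{p^e}) \subset k(\zeta_{p^e}, t)$ for a suitable $e$, where $\lambda := \lambda(k)_p$ and $e$ is chosen large enough that $\mu_{p^\infty}(k(\zeta_{p^e}))$ stabilizes. First I would establish that $x^{p^d} - u$ is irreducible over $k$; this is the classical criterion of Vahlen–Capelli, which under our hypothesis $u \notin \mu(k)_p (k^\times)^p$ says precisely that $x^{p^d} - u$ is irreducible (the condition $-4u \in (k^\times)^4$ that arises when $p = 2$ is subsumed, since it would force $u \in -4(k^\times)^4 \subseteq \mu(k)_2(k^\times)^2$). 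So $[k(t):k] = p^d$.

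Next I would set $L = k(t) \cap k^{\mathrm{ab}}$ and show $L = k(t^{p^\gamma})$ with $\gamma = \max(0, d - \lambda)$. The inclusion $\supseteq$ is the easier half: $k(t^{p^\gamma})$ is generated by a root of $x^{p^{\min(d,\lambda)}} - u$, and one checks that this is an abelian extension of $k$. Indeed, adjoining $\zeta_{p^{\min(d,\lambda)}}$ does nothing new because $\mu(k)$ already contains all $p$-power roots of unity of order up to $p^\lambda$ (and when $\lambda \le d$ there is nothing beyond that to worry about in $k(t^{p^\gamma})$), so $k(t^{p^\gamma})/k$ is a Kummer extension of exponent dividing $p^\lambda$, hence abelian — I would verify the Galois action is via a character into $\mu(k)$. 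For the reverse inclusion $\subseteq$, the main point is that the Galois closure of $k(t)/k$ is $M := k(\zeta_{p^e}, t)$ for $e$ large, and I would compute $\mathrm{Gal}(M/k)$ explicitly as a subgroup of the affine group $\mu_{p^d} \rtimes (\mathbb{Z}/p^d)^\times$ acting on the roots $\zeta_{p^d}^i t$. Its commutator subgroup fixes exactly $k(t^{p^\gamma})$: an element acting by $t \mapsto \zeta t$ with $\zeta$ of large $p$-power order cannot lie in the commutator subgroup modulo the "abelian part" unless its order is at most $p^\lambda$, because conjugation by the cyclotomic part multiplies such $\zeta$ by units, and the subgroup of $\mu_{p^d}$ generated by all $\zeta^{(c-1)}$ for $c \in (\mathbb{Z}/p^d)^\times \cap (\text{image})$ has index exactly $p^{\min(d,\lambda)}$. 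Then $L = M^{[\mathrm{Gal}(M/k), \mathrm{Gal}(M/k)] \cap \mathrm{Gal}(M/k(t))}$ works out to $k(t^{p^\gamma})$.

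The main obstacle I anticipate is the careful bookkeeping for $p = 2$, where $\mu(k)_2$ interacts badly with the non-cyclic structure of $(\mathbb{Z}/2^d)^\times$, and where the irreducibility criterion and the structure of $k(\zeta_{2^e})/k$ both have genuine exceptions. I would handle this by treating $\lambda = 1$ (i.e. $i \notin k$) separately from $\lambda \ge 2$, and in the former case using that $k(\zeta_{2^e})/k$ can be non-abelian, so that the "affine group" picture must be replaced by the full semidirect product with $(\mathbb{Z}/2^d)^\times$ acting through its actual image in $\mathrm{Gal}(k(\zeta_{2^d})/k)$; the computation of which $c$ occur is then the crux. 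The odd-$p$ case is cleaner because $(\mathbb{Z}/p^d)^\times$ is cyclic and the relevant subgroup generated by the $\zeta^{c-1}$ is easy to pin down by a $p$-adic valuation argument on $c - 1$.
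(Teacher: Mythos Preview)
Your approach via the Galois closure and commutator subgroups can be made to work, but it takes a longer route than the paper, and there are two slips to fix. First, the formula $L = M^{N \cap H}$ (with $N = [\mathrm{Gal}(M/k),\mathrm{Gal}(M/k)]$ and $H = \mathrm{Gal}(M/k(t))$) is backwards: under the Galois correspondence an intersection of fields corresponds to the \emph{join} of subgroups, so you want $M^{\langle N, H\rangle}$; the fixed field of $N \cap H$ is the compositum $k(t)\cdot M^N \supseteq k(t)$, which cannot be the proper subfield $k(t^{p^\gamma})$. (Relatedly, the claim that ``its commutator subgroup fixes exactly $k(t^{p^\gamma})$'' is off: $M^N$ also contains $k(\zeta_{p^e})$.) Second, ``$k(\zeta_{2^e})/k$ can be non-abelian'' is false --- cyclotomic extensions are always abelian; you presumably mean non-cyclic.

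The paper's argument is substantially simpler and, notably, handles all primes $p$ uniformly with no $p=2$ casework. It rests on two facts established by a short induction on $d$ (Lemma~\ref{L:LLL1}, Corollary~\ref{C:CCC1}, Lemma~\ref{L:LLL2}): (i) every field between $k$ and $k(t)$ is one of the $k(t^{p^j})$; and (ii) $\mu(k(t))_p = \mu(k)_p$, i.e.\ adjoining $t$ introduces no new $p$-power roots of unity. Given these, the proof is immediate: $L := k(t)\cap k^{\mathrm{ab}}$ must equal some $k(t^{p^j})$; since $L/k$ is Galois it contains the conjugate $\zeta_{p^{d-j}}t^{p^j}$, hence $\zeta_{p^{d-j}}$, and (ii) forces $d-j \le \lambda$. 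Fact (ii), proved by a one-line norm argument, is precisely the hidden ingredient your affine-group calculation would need in order to determine the size of $\mathrm{Gal}(M/k(\zeta_{p^e}))$ inside the semidirect product, so the paper's route isolates the essential point and dispenses with the structural analysis of $(\mathbb{Z}/p^d)^\times$ altogether.
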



We begin with the following lemma.
\begin{lem}\label{L:LLL1}
Let $p$ be a prime number $\neq$ {\rm char} $k$, and let $u\in k^\times\setminus\mu(k)_\r(k^\times)^\r$.
Set $k_1=k(\sqrt[\r]{u})$. Then

\medskip
{\rm \ (i)} $[k_1:k]=\r$;

{\rm \ (ii)} $\mu(k_1)_\r=\mu(k)_\r$

\vskip2mm

{\rm (iii)} None of the $\sqrt[\r]{u}$ are in $\mu(k_1)_\r(k_1^\times)^\r$.
\end{lem}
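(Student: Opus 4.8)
The three parts are tightly linked, so I would prove them in the order (i) $\Rightarrow$ (ii) $\Rightarrow$ (iii), with (i) being the crux. For part (i): the hypothesis $u \notin \mu(k)_p (k^\times)^p$ says in particular that $u \notin (k^\times)^p$ (take the trivial root of unity), and since $p$ is prime this is exactly the classical irreducibility criterion for $x^p - u$ (the standard fact that $x^p - a$ is irreducible over a field $k$ iff $a \notin (k^\times)^p$, valid when $p \neq \mathrm{char}\, k$; one can cite Lang's \emph{Algebra}). Hence $[k_1 : k] = \deg(x^p - u) = p$. I would note that the stronger hypothesis involving $\mu(k)_p$ is not needed for (i) — only $u \notin (k^\times)^p$ — but it is needed for (ii) and (iii), so I would flag that.

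For part (ii): clearly $\mu(k)_p \subseteq \mu(k_1)_p$, so I must show the reverse. Suppose $\mu(k_1)_p \supsetneq \mu(k)_p$. Since both are cyclic $p$-groups (finite or not — but if $\mu(k_1)_p$ were infinite it would contain all $p$-power roots of unity, and I can handle that case too), strict containment forces $k_1$ to contain a primitive $p$-th root of unity $\zeta_p$ that is not in $k$ when $\lambda(k)_p = 0$; more generally $k_1$ contains $\zeta_{p^{e+1}}$ where $p^e = |\mu(k)_p|$. Then $k(\zeta_{p^{e+1}})$ is a nontrivial subextension of $k_1/k$, hence equals $k_1$ by (i) (degree $p$, so no intermediate fields). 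But $k(\zeta_{p^{e+1}})/k$ is abelian (a subextension of a cyclotomic extension), whereas I claim $k_1/k$ is not abelian — or at least, I can derive a contradiction with $u \notin \mu(k)_p(k^\times)^p$. The cleanest route: if $k_1 = k(\zeta_{p^{e+1}})$ is Galois over $k$ with $\sqrt[p]{u} \in k_1$, then all conjugates $\zeta_p^i \sqrt[p]{u}$ lie in $k_1$, so $\zeta_p \in k_1$; comparing with the norm or using Kummer theory on $k(\zeta_p)$, one shows $u \in \mu(k)_p(k^\times)^p$, contradiction. I would carry this out via the explicit statement that $x^p - u$ has a root in an abelian extension only under the stated obstruction.

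For part (iii): suppose some root $r = \sqrt[p]{u}$ (so $r \in k_1$, $r^p = u$) lay in $\mu(k_1)_p (k_1^\times)^p$. Write $r = \omega s^p$ with $\omega \in \mu(k_1)_p$ and $s \in k_1^\times$. By (ii), $\omega \in \mu(k)_p$, so $\omega^p \in \mu(k)_p$ as well (indeed $\omega$ has order dividing $p^e$). Raising to the $p$-th power: $u = r^p = \omega^p s^{p^2}$. I want to descend this to a relation contradicting $u \notin \mu(k)_p(k^\times)^p$; the obstacle is that $s \in k_1$, not $k$, so I need to push the equation $u = \omega^p \cdot (s^p)^p$ down using the norm map $N = N_{k_1/k}$ or a trace-like argument, exploiting $[k_1:k] = p$ and the fact that $u, \omega^p \in k$. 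Taking norms, $u^p = N(u) = N(\omega^p) N(s^p)^p = \omega^{p^2} N(s^p)^p$ (using that $\omega^p \in k$ so its norm is its $p$-th power, and $N(s^p) = N(s)^p$), which rearranges to $u^p \in \mu(k)_p (k^\times)^{p}$ — not quite what I want, since I get a relation on $u^p$, not $u$. To fix this I would instead argue directly at the level of $k_1 = k(r)$: write $s$ as a polynomial in $r$ of degree $< p$ over $k$ and compare the relation $r = \omega s^p$ degree-by-degree, or — more elegantly — observe that $k_1/k$ being of prime degree $p$ with $u \notin (k^\times)^p$ means $k_1 = k(r)$ and the minimal polynomial is $x^p - u$, so any $k$-automorphism of the Galois closure sends $r \mapsto \zeta_p^j r$; if $r = \omega s^p$ with $\omega \in \mu(k)_p \subseteq k$, apply such an automorphism to get $\zeta_p^j r = \omega \sigma(s)^p$, divide, and conclude $\zeta_p^j = (\sigma(s)/s)^p$, forcing $\zeta_p \in k_1$ — then finish as in (ii).

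The main obstacle is part (iii), specifically the descent from a Kummer-type factorization valid in $k_1$ back to the obstruction hypothesis living in $k$; the norm argument gives only a statement about $u^p$, so I expect the real work is in either (a) a careful degree comparison in the basis $1, r, \dots, r^{p-1}$, or (b) reducing to the assertion $\zeta_p \in k_1$ and re-running the argument of (ii). Everything else — irreducibility in (i), the "no intermediate fields" reduction, the cyclotomic-subextension observation in (ii) — is standard field theory that I would dispatch with brief citations to Lang's \emph{Algebra}.
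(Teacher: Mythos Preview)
Your plan for (i) and (ii) is essentially the paper's argument: cite Lang for irreducibility, then in (ii) assume $\zeta_{p^{e+1}}\in k_1$, use the prime degree $[k_1:k]=p$ together with $[k(\zeta_p):k]\le p-1$ to force $\zeta_p\in k$, and invoke Kummer theory over $k$ to conclude $u\in\mu(k)_p(k^\times)^p$. That part is fine.

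The gap is in (iii). You correctly reach for the norm $N=N_{k_1/k}$, but you apply it to the \emph{squared} relation $u=\omega^p s^{p^2}$ and then give up because you only get information about $u^p$. The paper applies $N$ directly to $r=\omega s^p$. The point you are missing is that $N(\sqrt[p]{u})=\pm u$: since $x^p-u$ is the minimal polynomial of $r$ over $k$, the norm of $r$ is $(-1)^p(-u)$, i.e.\ $u$ for $p$ odd and $-u$ for $p=2$. Thus
\[
\pm u \;=\; N(r)\;=\;N(\omega)\,N(s)^p,
\]
with $N(\omega)\in\mu(k)_p$ (norm of a $p$-power root of unity) and $N(s)^p\in(k^\times)^p$. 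For $p=2$ one absorbs the sign into $\mu(k)_2$. This gives the contradiction in one line; no basis comparison or passage to the Galois closure is needed. (Incidentally, your own computation can be rescued: from $u^p=(\omega^p N(s)^p)^p$ you get $u=\eta\cdot\omega^p N(s)^p$ with $\eta\in k$ a $p$-th root of unity, hence $\eta\in\mu(k)_p$, and you are done. You abandoned it one step too early.) Your proposed alternative via automorphisms of the Galois closure runs into trouble because $\sigma(s)/s$ lives in that closure, not in $k_1$, so the conclusion $\zeta_p\in k_1$ does not follow as stated.
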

\begin{proof}
(i) follows from \cite[Ch.~VI, Theorem 9.1]{La}, as $u\notin(k^\times)^\r$.

(ii): If $\lambda(k)_\r=\infty$, then there is nothing to prove.  Otherwise,
we need to show that for $\lambda=\lambda(k)_\r$, we have $\zeta_{\r^{\lambda+1}}\notin k_1$.
Assume the contrary.  Then, first, $\lambda > 0$.  Indeed, we have a tower of inclusions
$k\subseteq k(\zeta_\r)\subseteq k_1$.  Since $[k_1:k] = \r$ by (i), and $[k(\zeta_p):k]\leq p-1$,
we conclude that $[k(\zeta_p):k] = 1$, i.e.~$\zeta_p\in k$.

Now, since $\zeta_{\r^{\lambda+1}}\notin k$, we have
\begin{equation}\label{E:EEE1}
k_1=k(\zeta_{\r^{\lambda+1}})=k(\sqrt[\r]{\zeta_{\r^{\lambda}}}).
\end{equation}
But according to Kummer's theory (which applies because $\zeta_\r \in k$), the fact that
$k(\sqrt[\r]{a}) = k(\sqrt[\r]{b})$ for $a , b \in k^{\times}$ implies that the images of $a$ and
$b$ in $k^{\times}/(k^{\times})^p$ generate the same subgroup.  So, it follows from (\ref{E:EEE1})
that $u\zeta_\r^i\in(k^{\times})^\r$ for some $i$, and therefore $u\in\mu(k)_\r(k^{\times})^\r$,
contradicting our choice of $u$.

\vskip2mm

(iii): Assume the contrary, i.e.~some $p$-th root $\sqrt[p]{u}$ can be written in the form
$\sqrt[\r]{u} = \zeta a^\r$ for some $a \in k_1^{\times}$ and $\zeta\in\mu(k_1)_\r$.
Let $N = N_{k_1/k} \colon k_1^{\times} \to k^{\times}$ be the norm map. Then $$
N(\sqrt[\r]{u}) = N(\zeta) N(a)^\r.
$$
Clearly, $N(\zeta) \in \mu(k)_\r$, so $N(\sqrt[\r]{u}) \in \mu(k)_\r(k^{\times})^\r$. On the other
hand, $N(\sqrt[\r]{u}) = u$ for $\r$ odd, and $-u$ for $\r = 2$. In all cases, we obtain that
$u \in \mu(k)_\r(k^{\times})^\r$. A contradiction.
\end{proof}

\medskip

A simple induction now yields the following:
\begin{cor}\label{C:CCC1}
Let $p$ be a prime number $\neq$ {\rm char} $k$, and let
$u \in k^{\times} \setminus \mu(k)_\r(k^{\times})^\r$.  For a fixed integer $d\geq 1$, set
$k_d=k(\sqrt[\r^d]{u})$.  Then:

\medskip
{\rm\ \ (i)} $[k_d:k]$ = $\r^d$;
\vskip2mm
{\rm\ (ii)} $\mu(k_d)_\r=\mu(k)_\r$, hence $\lambda(k_d)_\r=\lambda(k)_\r$.
\end{cor}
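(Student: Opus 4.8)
The plan is to deduce the corollary from Lemma~\ref{L:LLL1} by a straightforward induction on $d$, proving both statements together. First I would fix one root $t$ of $x^{p^d}-u$ over $k$ and build the compatible tower it determines: for $0\le i\le d$ put $t_i=t^{p^{d-i}}$ and $L_i=k(t_i)$, so that $t_i^{p^i}=u$, $t_i^p=t_{i-1}$, $L_0=k$ and $L_d=k(t)$. The assertion to be established by induction on $i$ is that, for every $i$ with $0\le i\le d$,
\begin{equation*}
[L_i:k]=p^i,\qquad \mu(L_i)_p=\mu(k)_p,\qquad t_i\notin\mu(L_i)_p(L_i^\times)^p .
\end{equation*}
The case $i=0$ is immediate, the last clause being exactly the hypothesis $u\notin\mu(k)_p(k^\times)^p$.

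For the inductive step I would assume the three statements for some $i<d$ and apply Lemma~\ref{L:LLL1} with its base field $k$ replaced by $L_i$ and its element $u$ replaced by $t_i$: this is legitimate precisely because $t_i\notin\mu(L_i)_p(L_i^\times)^p$, and because $L_{i+1}=L_i(t_{i+1})=L_i(\sqrt[p]{t_i})$. Part~(i) of the lemma then gives $[L_{i+1}:L_i]=p$, hence $[L_{i+1}:k]=p^{i+1}$; part~(ii) gives $\mu(L_{i+1})_p=\mu(L_i)_p=\mu(k)_p$; and part~(iii) says that none of the $p$-th roots of $t_i$ lie in $\mu(L_{i+1})_p(L_{i+1}^\times)^p$, which in particular holds for $t_{i+1}$. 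This closes the induction, and taking $i=d$ produces $[L_d:k]=p^d$ and $\mu(L_d)_p=\mu(k)_p$.

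It remains to see that these conclusions do not depend on the chosen root. Since $t$ is a root of the degree-$p^d$ polynomial $x^{p^d}-u\in k[x]$ and $[k(t):k]=p^d$, that polynomial must be the minimal polynomial of $t$ over $k$; being monic, it is therefore irreducible over $k$. Hence every root of $x^{p^d}-u$ generates over $k$ a field $k$-isomorphic to $L_d$, so indeed $[k_d:k]=p^d$ and $\mu(k_d)_p=\mu(k)_p$ for the field $k_d=k(\sqrt[p^d]{u})$ formed from any root, and then $\lambda(k_d)_p=\lambda(k)_p$ follows from $|\mu(k_d)_p|=|\mu(k)_p|$.

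I do not anticipate a genuine obstacle: the argument is entirely formal once Lemma~\ref{L:LLL1} is available. The only points needing a little care are the choice of a single fixed root together with its tower of $p$-power roots (so that clause~(iii) of the lemma supplies precisely the hypothesis needed to invoke the lemma again at the next stage), and the concluding remark that irreducibility of $x^{p^d}-u$ makes $k_d$ well defined independently of the root used to form it.
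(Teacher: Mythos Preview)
Your proof is correct and is precisely the ``simple induction'' on Lemma~\ref{L:LLL1} that the paper itself invokes (without further details) to deduce the corollary. You have carefully written out the inductive hypothesis---carrying along the extra clause $t_i\notin\mu(L_i)_p(L_i^\times)^p$ so that the lemma applies again at the next step---and your closing remark on irreducibility handling the choice of root is a nice touch, but the approach is the same as the paper's.
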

Of course, assertion (i) is well-known and follows, for example, from \cite[Ch.~VI, \S 9]{La}.

\begin{lem}\label{L:LLL2}
Let $p$ be a prime number $\neq \mathrm{char}\: k$, and let
$u \in k^{\times} \setminus \mu(k)_\r(k^{\times})^\r$.  Fix an integer $d \geq 1$, and set
$t = \sqrt[p^d]{u}$ and $k_d = k(t)$.  Furthermore, for an integer $j$ between $0$ and $d$
define $\ell_j = k(t^{p^{d - j}}) \simeq k(\sqrt[p^j]{u})$.  Then any intermediate subfield
$k \subseteq \ell \subseteq k_d$ is of the form $\ell = \ell_j$ for some $j\in\{ 0,\ldots,d\}$.
\end{lem}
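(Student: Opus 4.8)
The plan is to show that the tower $k = \ell_0 \subseteq \ell_1 \subseteq \cdots \subseteq \ell_d = k_d$ exhausts all intermediate fields by a degree-counting argument combined with the structure of the Galois-theoretic picture at each stage. First I would record that by Corollary~\ref{C:CCC1}(i) we have $[k_d : k] = p^d$, and more precisely $[\ell_j : k] = p^j$ for each $j$, since $\ell_j \simeq k(\sqrt[p^j]{u})$ and $u \notin \mu(k)_p(k^\times)^p$. Thus the chain $\ell_0 \subset \ell_1 \subset \cdots \subset \ell_d$ is a maximal chain of subfields with $[\ell_{j+1}:\ell_j] = p$ at every step. Now let $\ell$ be an arbitrary intermediate field; then $[\ell : k] = p^i$ for some $0 \le i \le d$, and $[k_d : \ell] = p^{d-i}$. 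The goal is to force $\ell = \ell_i$.

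The key step is to analyze $k_d$ as an extension of $\ell$. I would apply Lemma~\ref{L:LLL1} and Corollary~\ref{C:CCC1} with the base field $k$ replaced by $\ell$: one needs to check that $u$ (or rather $t^{p^{d-i}}$, a $p^i$-th root of $u$, which lies in $\ell_i$ and hence the element $t^{p^{d-i}} \in \ell$ once we know $\ell \supseteq \ell_i$ — but this is what we are trying to prove, so care is needed) still fails to lie in $\mu(\ell)_p(\ell^\times)^p$. A cleaner route: since $[k_d:\ell] = p^{d-i}$ and $k_d = \ell(t)$ with $t^{p^d} = u$, the element $s := t^{p^i}$ satisfies $s^{p^{d-i}} = u$ and $k_d = \ell(s)$, so $[\ell(s):\ell] = p^{d-i}$ forces (by the converse direction in \cite[Ch.~VI, Theorem~9.1]{La}, or by Corollary~\ref{C:CCC1} read backwards) that $x^{p^{d-i}} - u$ is irreducible over $\ell$; in particular $\ell$ contains no $p$-th root of $u$ beyond what is needed, and one extracts that $t^{p^{d-i}} \in \ell$ — equivalently $\ell \supseteq k(t^{p^{d-i}}) = \ell_i$. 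Since both $\ell$ and $\ell_i$ have degree $p^i$ over $k$ and one contains the other, they coincide.

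The main obstacle I anticipate is the bookkeeping in the previous paragraph: establishing that $t^{p^{d-i}} \in \ell$ from the degree data alone. The point that makes it work is that $k_d/k$ restricted to the subgroup of roots $\{\zeta t : \zeta \in \mu_{p^d}\}$ is highly constrained — the irreducibility of $x^{p^d} - u$ over $k$ (Proposition~\ref{P:PPP1}'s hypothesis, via Lemma~\ref{L:LLL1}) means $t$ has exactly $p^d$ conjugates, and any subfield $\ell$ is the fixed field of the subgroup of $\mathrm{Aut}(k_d/k)$ (or of a suitable Galois closure) fixing $\ell$; tracking how such automorphisms act on $t$ forces $\ell$ to be generated by a power of $t$. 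I would phrase this either through an explicit induction on $d$ (peeling off $\ell_1 = k(\sqrt[p]{u})$ using Lemma~\ref{L:LLL1}(i),(ii),(iii), then applying the inductive hypothesis to $k_d/\ell_1$, whose relevant hypotheses are exactly guaranteed by parts (ii) and (iii) of that lemma), which I expect to be the smoothest presentation, or by passing to the Galois closure $k_d(\zeta_{p^d})$ and using Kummer theory there. The induction seems preferable since Lemma~\ref{L:LLL1} was evidently set up with precisely this propagation of hypotheses in mind.
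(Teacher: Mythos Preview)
Your proposal has a genuine gap: the crucial step --- producing an appropriate power of $t$ inside $\ell$ --- is never actually carried out. In the ``cleaner route'' you assert that $k_d = \ell(s)$ with $s = t^{p^i}$, but this is unjustified: $\ell(s) = \ell \cdot \ell_{d-i}$, and there is no a~priori reason this compositum should be all of $k_d$. (Indeed, showing it \emph{is} all of $k_d$ is essentially equivalent to what you are trying to prove.) Even granting that claim, the passage from ``$x^{p^{d-i}}-u$ is irreducible over $\ell$'' to ``$t^{p^{d-i}} \in \ell$'' is circular: irreducibility tells you $u$ has no low-degree roots in $\ell$, not that it has the specific root $t^{p^{d-i}}$. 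The inductive route has the same problem in disguise. Peeling off $\ell_1$ and classifying the subfields of $k_d/\ell_1$ does not by itself classify the subfields of $k_d/k$: you must still show that every intermediate $\ell \neq k$ contains $\ell_1$, and already for $d=2$ (where induction gives you nothing) this is exactly the heart of the matter.

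The missing idea, which the paper supplies, is a norm computation. Write $[k_d:\ell] = p^j$. Since every $k$-conjugate of $t$ has the form $\zeta t$ with $\zeta^{p^d}=1$, the norm $N_{k_d/\ell}(t)$ (equivalently, $\pm$ the constant term of the minimal polynomial of $t$ over $\ell$) equals $\zeta_0\, t^{p^j}$ for some $\zeta_0 \in \mu(k_d)_p$. Now Corollary~\ref{C:CCC1}(ii) --- which you correctly identified as relevant but never deployed --- gives $\mu(k_d)_p = \mu(k)_p \subseteq k \subseteq \ell$, so $t^{p^j} \in \ell$. Hence $\ell_{d-j} \subseteq \ell$, and comparing degrees gives $\ell = \ell_{d-j}$. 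This single line replaces both of your proposed mechanisms.
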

\begin{proof}
Given such an $\ell$, it follows from Corollary \ref{C:CCC1}(i) that $[k_d:\ell]=p^j$ for some
$0\leq j\leq d$.  Since any conjugate of $t$ is of the form $\zeta\cdot t$ where $\zeta^{\r^d}=1$,
we see that the norm $N_{k_d/\ell}(t)$ is of the form $\zeta_0 t^{\r^\j}$, where again
$\zeta_0^{\r^d}=1$.  Then $\zeta_0\in\mu(k_d)_\r$, and using Corollary \ref{C:CCC1}(ii),
we conclude that $\zeta_0\in k\subseteq\ell$.  So, $t^{\r^\j}\in\ell$, implying the inclusion
$\ell_{d-j} \subseteq \ell$.  Now, the fact that $[k_d : \ell_{d-j}] = p^j$ implies that
$\ell = \ell_{d-j}$, yielding our claim.
\end{proof}

\vskip5mm

\noindent {\it Proof of Proposition \ref{P:PPP1}.}
Set $\lambda = \lambda(k)_\r$.  Then for any $d \leq \lambda$ the extension $k(\sqrt[p^d]{u})/k$
is abelian, and our assertion is trivial.  So, we may assume that $\lambda < \infty$ and
$d > \lambda$.  It follows from Lemma \ref{L:LLL2} that $\ell := k(t) \cap k^{\mathrm{ab}}$ is
of the form $\ell_{d - j} = k(t^{p^j})$ for some $j \in \{ 0, \ldots , d \}$.  On the other hand,
$\ell_{d-j}/k$ is a Galois extension of degree $p^{d-j}$, so must contain the conjugate
$\zeta_{p^{d-j}} t^{p^{d-j}}$ of $t^{p^{d-j}}$, implying that $\zeta_{p^{d-j}} \in \ell_{d-j}$.
Since $\ell_{d-j} \simeq k(\sqrt[p^{d-j}]{u})$, we conclude from Corollary \ref{C:CCC1}(ii) that
$d - j \leq \lambda$, i.e.~$j \geq d - \lambda$.  This proves the inclusion
$\ell \subseteq k(t^{p^{\gamma}})$; the opposite inclusion is obvious.

\vskip5mm

\noindent {\it Proof of Proposition \ref{P:PPP2}.}
Let $n = p_1^{\alpha_1} \cdots p_s^{\alpha_s}$ be the prime factorization of $n$, and
for $i = 1, \ldots , s$ set $n_i = n/p_i^{\alpha_i}$.  Let $t = \sqrt[n]{u}$ and $t_i = t^{n_i}$
(so, $t_i$ is a $p_i^{\alpha_i}$-th root of $u$).  Using again \cite[Ch.~VI, Theorem 9.1]{La} we conclude
that $[k(t) : k] = n$, which implies that
\begin{equation}\label{i}
[k(t) : k(t_i)] = n_i \ \ \text{for all} \ \ i = 1, \ldots , r.
\end{equation}
Since for $K := k(t) \cap k^{\mathrm{ab}}$ the degree $[K : k]$ divides $n$, we can write
$K = K_1 \cdots K_s$ where $K_i$ is an abelian extension of $k$ of degree $p_i^{\beta_i}$
for some $\beta_i \leq \alpha_i$. Then the degree $[K_i(t_i) : k(t_i)]$ must be a power of $p_i$.
Comparing with (\ref{i}), we conclude that $K_i \subseteq k(t_i)$.  Applying Proposition \ref{P:PPP1}
with $d = \alpha_i$, we obtain the inclusion
\begin{equation}\label{pi}
K_i \subseteq k(t_i^{p_i^{\gamma_i}}) = k(t^{n_i p_i^{\gamma_i}}) \ \ \text{where} \ \
\gamma_i = \max(0 \, , \, \alpha_i - \lambda(k)_{\r_i}).
\end{equation}
It is easy to see that the g.c.d.~of the numbers $n_ip_i^{\gamma_i}$ for $i = 1, \ldots , s$ is
$$
m = \frac{n}{\displaystyle\prod_{\r\mid n}{\rm gcd}(n,\r^{\lambda(k)_\r})}.
$$
Furthermore, the subgroup of $k(t)^{\times}$ generated by
$t^{n_1 p_1^{\gamma_1}}, \ldots , t^{n_s p_s^{\gamma_s}}$ coincides with the cyclic subgroup
with generator $t^m$.  Then (\ref{pi}) yields the following inclusion
$$
K = K_1 \cdots K_s \subseteq k(t^m).
$$
Since the opposite inclusion is obvious, our claim follows. \hfill $\Box$

\vskip5mm

\begin{cor}\label{L:max-ab}
Assume that $\mu = \vert \mu(k)\vert<\infty$.  Let $P$ be a finite set of rational primes $\neq \mathrm{char}\: k$, and
define $$
\mu' = \mu \cdot \prod_{p \in P} p.
$$
Given $u \in k^{\times}$ such that $$
u \notin \mu(k)_\r(k^{\times})^\r \ \ \text{for all} \ \ \r\in P,
$$
for any abelian extension $F$ of $k$ the intersection $$
E := F \cap k\left( \sqrt[\mu']{u} , \zeta_{\mu'} \right)
$$
is contained in $k\left(\sqrt[\mu]{u} , \zeta_{\mu'}\right)$.
\end{cor}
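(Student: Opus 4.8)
The plan is to reduce the statement about the compositum $k\bigl(\sqrt[\mu']{u},\zeta_{\mu'}\bigr)$ to Proposition \ref{P:PPP2}, which identifies the maximal abelian subextension of a single radical extension $k(t)$ with $t=\sqrt[n]{u}$. First I would set $t=\sqrt[\mu']{u}$ and $L=k(t)$, so that $k\bigl(\sqrt[\mu']{u},\zeta_{\mu'}\bigr)=L(\zeta_{\mu'})$. Since $\zeta_{\mu'}$ generates an abelian extension of $k$, the field $L(\zeta_{\mu'})$ differs from $L$ only by an abelian piece; the key point is that any subfield of $L(\zeta_{\mu'})$ that is abelian over $k$ is contained in $\bigl(L\cap k^{\mathrm{ab}}\bigr)(\zeta_{\mu'})$. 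Indeed, if $E\subseteq L(\zeta_{\mu'})$ is abelian over $k$, then $E(\zeta_{\mu'})$ is abelian over $k(\zeta_{\mu'})$ (being a compositum of the abelian extensions $E$ and $k(\zeta_{\mu'})$ over the latter), hence $E(\zeta_{\mu'})\cap L$ is abelian over $k(\zeta_{\mu'})$... but I actually want abelianness over $k$, so instead I would argue: $E\subseteq L(\zeta_{\mu'})$ and $E/k$ abelian force $E\cdot k(\zeta_{\mu'})/k$ to be abelian, so $E\cdot k(\zeta_{\mu'})\subseteq L(\zeta_{\mu'})$ is an abelian-over-$k$ subextension; its intersection with $L$ is then an abelian-over-$k$ subextension of $L$, i.e.\ contained in $L\cap k^{\mathrm{ab}}$.

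Next I would apply Proposition \ref{P:PPP2} with $n=\mu'$ and this $u$ (the hypothesis $u\notin\mu(k)_\r(k^\times)^\r$ for $\r\in P$ is exactly what is needed — and for primes $\r\mid\mu$ but $\r\notin P$ one checks the hypothesis is automatic since such $\r$ divide $\mu=|\mu(k)|$, but actually we need it for \emph{all} $\r\mid\mu'$; here I must be careful, so the honest route is to note $\mu'=\mu\cdot\prod_{p\in P}p$ and that the relevant hypothesis in Proposition \ref{P:PPP2} is $u\notin\mu(k)_\r k^{\times\r}$ for $\r\mid n$, which for $\r\mid\mu'$ means $\r\mid\mu$ or $\r\in P$; the case $\r\in P$ is assumed, and I will need to handle $\r\mid\mu$, $\r\notin P$ separately or absorb it — see the obstacle below). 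Granting the hypothesis, Proposition \ref{P:PPP2} gives $L\cap k^{\mathrm{ab}}=k(t^m)$ with $m=\mu'/\prod_{\r\mid\mu'}\gcd(\mu',\r^{\lambda(k)_\r})$. The factor $\prod_{\r\mid\mu'}\gcd(\mu',\r^{\lambda(k)_\r})$ is precisely $\prod_\r \r^{\lambda(k)_\r}=\mu$ once one checks that for every prime $\r\mid\mu'$ the $\r$-primary part of $\mu'$ is at least $\r^{\lambda(k)_\r}$: for $\r\in P\setminus(\text{primes dividing }\mu)$ we have $\lambda(k)_\r=0$ and the gcd is $1$; for $\r\mid\mu$ the $\r$-primary part of $\mu'$ equals that of $\mu$ (multiplying by distinct primes in $P$ changes nothing, or bumps a prime already dividing $\mu$ by one power, which still dominates $\r^{\lambda(k)_\r}$ since $\r^{\lambda(k)_\r}\mid\mu$). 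Hence $m=\mu'/\mu=\prod_{p\in P}p$, wait — that gives $t^m=u^{1/\mu}$ up to roots of unity, so $k(t^m)=k(\sqrt[\mu]{u})$, and therefore $L\cap k^{\mathrm{ab}}\subseteq k(\sqrt[\mu]{u})$. Combining with the first paragraph, $E\subseteq (L\cap k^{\mathrm{ab}})(\zeta_{\mu'})\subseteq k(\sqrt[\mu]{u},\zeta_{\mu'})$, which is the claim.

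The main obstacle is bookkeeping with the primes: Proposition \ref{P:PPP2} requires $u\notin\mu(k)_\r k^{\times\r}$ for \emph{all} primes $\r\mid\mu'$, but the corollary only assumes it for $\r\in P$. For a prime $\r\mid\mu$ with $\r\notin P$ this can genuinely fail. The fix I expect to use is that such primes contribute trivially: if $u\in\mu(k)_\r k^{\times\r}$ then the $\r$-part of the radical tower collapses into an abelian (in fact Kummer, since $\zeta_\r\in k$ when $\r\mid\mu$) extension, so it is harmless — concretely one applies Proposition \ref{P:PPP2} not to $\mu'$ but to the largest divisor $n'\mid\mu'$ built only from primes satisfying the hypothesis, absorbs the remaining prime-power factors (which generate abelian extensions of $k$) into the ``$\zeta_{\mu'}$'' and ``$\sqrt[\mu]{u}$'' parts, and checks the arithmetic of exponents still yields containment in $k(\sqrt[\mu]{u},\zeta_{\mu'})$. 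Making this reduction clean — deciding whether to reprove a variant of Proposition \ref{P:PPP2} or to factor $u$'s radical extension by hand — is the one place requiring genuine care; everything else is the compositum-with-cyclotomic argument of the first paragraph plus the gcd computation already carried out in the proof of Proposition \ref{P:PPP2}.
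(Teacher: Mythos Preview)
Your plan can be made to work, and you have correctly isolated its one genuine difficulty; the paper, however, takes a shorter route that sidesteps exactly that difficulty.

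Rather than compute $k(\sqrt[\mu']{u})\cap k^{\mathrm{ab}}$, the paper argues by contradiction directly on a tower. Assuming without loss of generality that $\zeta_{\mu'}\in F$ (so $k(\zeta_{\mu'})\subseteq E$), one has
\[
k\bigl(\sqrt[\mu]{u},\zeta_{\mu'}\bigr)\ \subseteq\ E\bigl(\sqrt[\mu]{u}\bigr)\ \subseteq\ k\bigl(\sqrt[\mu']{u},\zeta_{\mu'}\bigr),
\]
with total degree dividing $\prod_{p\in P}p$. If the conclusion fails, some $p\in P$ divides the lower degree and hence not the upper one, which forces $\sqrt[p\mu]{u}\in E(\sqrt[\mu]{u})$. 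But $E(\sqrt[\mu]{u})=E\cdot k(\sqrt[\mu]{u})$ is abelian over $k$ (using only that $\zeta_\mu\in k$), while Proposition~\ref{P:PPP1} applied to this single $p\in P$ --- where the hypothesis $u\notin\mu(k)_p(k^\times)^p$ is assumed --- gives $\sqrt[p^{\lambda(k)_p+1}]{u}\notin k^{\mathrm{ab}}$, hence $\sqrt[p\mu]{u}\notin k^{\mathrm{ab}}$. Contradiction.

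The point of comparison is that the paper only ever invokes the radical hypothesis for primes in $P$, so your obstacle (primes $p\mid\mu$ with $p\notin P$, for which $u$ may well lie in $\mu(k)_p(k^\times)^p$) simply never arises. In your approach via Proposition~\ref{P:PPP2} with $n=\mu'$ it does, and your sketched fix (strip off the ``bad'' prime factors and absorb them into the abelian part) would need to be carried out in full. Your first reduction $E\subseteq (L\cap k^{\mathrm{ab}})(\zeta_{\mu'})$ is correct but also deserves one more line: the identity $L(\zeta_{\mu'})\cap k^{\mathrm{ab}}=(L\cap k^{\mathrm{ab}})\cdot k(\zeta_{\mu'})$ follows from Dedekind's modular law in the ambient Galois group, since the subgroup fixing $k(\zeta_{\mu'})$ contains the commutator subgroup; the argument you actually wrote (intersect $E\cdot k(\zeta_{\mu'})$ with $L$) establishes that this intersection lies in $L\cap k^{\mathrm{ab}}$ but does not by itself give $E\subseteq(L\cap k^{\mathrm{ab}})(\zeta_{\mu'})$.
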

\begin{proof}
Without loss of generality we may assume that $\zeta_{\mu'} \in F$, and then we have the following tower of field extensions
$$
k\left(\sqrt[\mu]{u} \, , \, \zeta_{\mu'}  \right) \subset E\left(\sqrt[\mu]{u}\right) \subset k\big( \sqrt[\mu']{u} \, , \, \zeta_{\mu'} \big).
$$
We note that the degree $\left[k\left( \sqrt[\mu']{u} \, , \, \zeta_{\mu'} \right) :    k\left(\sqrt[\mu]{u} \, , \, \zeta_{\mu'}  \right) \right]$ divides $\prod_{p \in P} p$. So, if we assume that the assertion of the lemma is false, then we should be able to find to find a prime $p \in P$ that divides the degree $\left[E\left(\sqrt[\mu]{u}\right) :   k\left(\sqrt[\mu]{u} \, , \, \zeta_{\mu'}  \right)\right]$, and therefore does {\it not} divide the degree
$\left[k\left( \sqrt[\mu']{u} \, , \, \zeta_{\mu'} \right) :   E\left(\sqrt[\mu]{u}\right)\right]$. The latter implies that $\sqrt[p\mu]{u} \in E\left(\sqrt[\mu]{u}\right)$. But this contradicts Proposition 2.1 since $E\left(\sqrt[\mu]{u}\right) = E \cdot k\left(\sqrt[\mu]{u}\right)$ is an abelian extension of $k$.

\end{proof}
\vskip5mm
\section{Results from Algebraic Number Theory}\label{S:ANT}

\noindent {\bf 1. $\Bbb Q$-split primes.} Our proof of Theorem \ref{T:Main} heavily relies on properties of so-called $\Bbb Q$-split primes in $\mathscr{O}$.

\vskip1mm

\noindent {\bf Definition.} Let $\mathfrak{p}$ be a nonzero prime ideal of $\mathscr{O}$, and
let $p$ be the corresponding rational prime.  We say that $\mathfrak{p}$ is {\it $\Bbb Q$-split}
if $p > 2$, and for the valuation $v = v_{\mathfrak{p}}$ we have $k_v = \Bbb Q_p$.

\vskip1mm

For the convenience of further references, we list some simple properties of $\Bbb Q$-split primes.
\begin{lem}\label{prim}
Let $\mathfrak{p}$ be a $\Bbb Q$-split prime in $\mathscr{O}$, and for $n \geq 1$ let
$\rho_n\colon\mathscr{O}\to\mathscr{O}/\mathfrak{p}^n$ be the corresponding quotient map. Then:

\vskip2mm

{\rm (a)}
the group of invertible elements $(\mathscr{O}/\mathfrak{p}^n)^{\times}$ is cyclic for any $n$;

\vskip1mm

{\rm (b)} if $c \in \mathscr{O}$ is such that $\rho_2(c)$ generates
$(\mathscr{O}/\mathfrak{p}^2)^{\times}$ then $\rho_n(c)$ generates
$(\mathscr{O}/\mathfrak{p}^n)^{\times}$ for any $n \geq 2$.
\end{lem}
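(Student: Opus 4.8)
The statement is a standard fact about the local ring $\mathscr{O}/\mathfrak{p}^n$ once one unwinds the definition of a $\mathbb{Q}$-split prime. The plan is to reduce everything to the structure of $(\mathbb{Z}/p^n\mathbb{Z})^{\times}$.

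First I would observe that since $\mathfrak{p}$ is $\mathbb{Q}$-split, the local degree of $k_{v_{\mathfrak{p}}}$ over $\mathbb{Q}_p$ is $1$, so the residue field $\mathscr{O}/\mathfrak{p}$ is $\mathbb{F}_p$ and the completion $\mathscr{O}_{v_{\mathfrak{p}}}$ is (isomorphic to) $\mathbb{Z}_p$. The key point is that $\mathscr{O}/\mathfrak{p}^n \cong \mathscr{O}_{v_{\mathfrak{p}}}/\hat{\mathfrak{p}}_{v_{\mathfrak{p}}}^n \cong \mathbb{Z}_p/p^n\mathbb{Z}_p = \mathbb{Z}/p^n\mathbb{Z}$, the first isomorphism being the standard fact that $S$-integers are dense in each completion (so reduction mod $\mathfrak{p}^n$ is already surjective from $\mathscr{O}$, and the kernel is $\mathfrak{p}^n$). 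Granting this ring isomorphism, part (a) is immediate: $(\mathbb{Z}/p^n\mathbb{Z})^{\times}$ is cyclic for every $n$ when $p$ is an odd prime, which is why the definition of $\mathbb{Q}$-split builds in $p > 2$ — this is the classical primitive-root theorem, e.g.\ from any standard number theory text.

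For part (b), I would argue as follows. Write $G_n = (\mathscr{O}/\mathfrak{p}^n)^{\times}$, a cyclic group of order $\phi(\mathfrak{p}^n) = p^{n-1}(p-1)$, and let $\pi_n\colon G_n \twoheadrightarrow G_2$ be the reduction map for $n \geq 2$ (induced by $\mathscr{O}/\mathfrak{p}^n \to \mathscr{O}/\mathfrak{p}^2$). Suppose $\rho_2(c)$ generates $G_2$. Let $H = \langle \rho_n(c)\rangle \leq G_n$ be the cyclic subgroup generated by $\rho_n(c)$; we must show $H = G_n$. Since $\pi_n(\rho_n(c)) = \rho_2(c)$ generates $G_2$, the order of $\rho_n(c)$ is divisible by $|G_2| = p(p-1)$. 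On the other hand, $|H|$ divides $|G_n| = p^{n-1}(p-1)$, so to conclude $|H| = |G_n|$ it suffices to rule out the possibility that the $p$-primary part of $|H|$ is strictly smaller than $p^{n-1}$. The kernel of $\pi_n$ is $1 + \mathfrak{p}/\mathfrak{p}^n$ (the principal units mod $\mathfrak{p}^n$), which under the identification with $\mathbb{Z}/p^n\mathbb{Z}$ is the cyclic $p$-group $1 + p\mathbb{Z}/p^n\mathbb{Z}$ of order $p^{n-1}$, generated by $1+p$ (here again $p$ odd is what makes $1+p$ of exact order $p^{n-1}$). Since $\rho_2(c)$ generates $G_2$, its order is divisible by $p$, so some power $\rho_n(c)^{j}$ lands in $\ker\pi_n$ and is a nontrivial element there — more precisely, I would take $j = p-1$ (or the prime-to-$p$ part of $|G_2|$) so that $\rho_2(c)^j$ has order exactly $p$ in $G_2$, hence $\rho_n(c)^j \in \ker\pi_n$ but $\rho_n(c)^j \notin \ker\pi_2$, i.e.\ $\rho_n(c)^j$ is a principal unit that is $\not\equiv 1 \pmod{\mathfrak{p}^2}$. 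An element of the cyclic $p$-group $1+\mathfrak{p}/\mathfrak{p}^n \cong \mathbb{Z}/p^{n-1}\mathbb{Z}$ that is nontrivial modulo the index-$p$ subgroup $1+\mathfrak{p}^2/\mathfrak{p}^n$ is necessarily a generator of this $p$-group. Therefore $\langle \rho_n(c)^j\rangle = \ker\pi_n$, which combined with surjectivity of $\pi_n|_H$ onto $G_2$ forces $H = G_n$.

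The only mild subtlety — and the step I would be most careful about — is the claim that $\mathscr{O}/\mathfrak{p}^n \cong \mathbb{Z}/p^n\mathbb{Z}$, i.e.\ that $\mathbb{Q}$-splitness (a condition on the completion) really does pin down the finite quotient ring; this rests on the density of $\mathscr{O}$ in $\mathscr{O}_{v_{\mathfrak{p}}}$ and the equality $k_{v_{\mathfrak{p}}} = \mathbb{Q}_p$, and I expect it to be recorded or used without fuss in \S3.1. Everything after that is the elementary structure theory of $(\mathbb{Z}/p^n\mathbb{Z})^{\times}$ for odd $p$: it is cyclic, and a unit reducing to a generator mod $p^2$ is automatically a generator mod $p^n$ because the principal-unit filtration is a single cyclic $p$-group detected at the level $\mathfrak{p}^2/\mathfrak{p}^{n}$. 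I would present (b) via this ``generator of the $p$-Sylow is detected mod $\mathfrak{p}^2$'' principle rather than by an explicit $(1+p)^{p^{k}}$ computation, to keep the argument short.
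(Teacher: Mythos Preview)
Your approach is correct and matches the paper's: both reduce to the isomorphism $\mathscr{O}/\mathfrak{p}^n \cong \mathscr{O}_v/\hat{\mathfrak{p}}_v^n \cong \mathbb{Z}_p/p^n\mathbb{Z}_p \cong \mathbb{Z}/p^n\mathbb{Z}$ and then invoke the classical structure of $(\mathbb{Z}/p^n\mathbb{Z})^\times$ for odd $p$. For (b) the paper packages your ``generator of the $p$-Sylow is detected mod $\mathfrak{p}^2$'' principle as the single observation that the kernel of $(\mathscr{O}/\mathfrak{p}^n)^\times \to (\mathscr{O}/\mathfrak{p}^2)^\times$ lies in the Frattini subgroup of $(\mathscr{O}/\mathfrak{p}^n)^\times$ for $n\geq 2$; your explicit argument via $\rho_n(c)^{p-1}$ is an unwinding of exactly this.

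One labeling slip to fix: you write $\ker\pi_n = 1+\mathfrak{p}/\mathfrak{p}^n$, but since $\pi_n$ is reduction to $G_2$ its kernel is $1+\mathfrak{p}^2/\mathfrak{p}^n$ (order $p^{n-2}$). What you actually use --- and correctly --- is that $\rho_n(c)^{p-1}$ lies in the full principal-unit group $1+\mathfrak{p}/\mathfrak{p}^n$ (the kernel of $G_n\to G_1$) but not in its index-$p$ subgroup $1+\mathfrak{p}^2/\mathfrak{p}^n$, hence generates the $p$-Sylow of $G_n$; together with $(p-1)\mid |H|$ (from surjectivity onto $G_2$) this gives $H=G_n$. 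Once the two kernels are relabeled the argument is fine.
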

\begin{proof}
Let $p > 2$ be the rational prime corresponding to $\mathfrak{p}$, and
$v = v_{\mathfrak{p}}$ be the associated valuation of $k$.  By definition, $k_v = \Bbb Q_p$,
hence $\mathscr{O}_v = \Bbb Z_p$.  So, for any $n \geq 1$ we will have canonical ring isomorphisms
\begin{equation}\label{E:Iso}
\mathscr{O}/\mathfrak{p}^n \simeq \mathscr{O}_v /\hat{\mathfrak{p}}_v^n \simeq \Bbb Z_p/p^n\Bbb Z_p \simeq \Bbb Z/p^n\Bbb Z.
\end{equation}
Then (a) follows from the well-known
fact that the group $(\Bbb Z/p^n\Bbb Z)^\times$ is cyclic. Furthermore, the isomorphisms in (\ref{E:Iso})
are compatible for different $n$'s.  Since the kernel of the group homomorphism
$(\Bbb Z/p^n\Bbb Z)^{\times} \to (\Bbb Z/p^2\Bbb Z)^{\times}$ is contained in the
Frattini subgroup of $(\Bbb Z/p^n\Bbb Z)^{\times}$ for $n\geq 2$, the same is true for
the homomorphism $(\mathscr{O}/\mathfrak{p}^n)^\times\to(\mathscr{O}/\mathfrak{p}^2)^{\times}$.
This easily implies (b).
\end{proof}

Let $\mathfrak{p}$ be a $\Bbb Q$-split prime, let $v = v_{\mathfrak{p}}$ be the corresponding
valuation.  We will now define the {\it level} $\ell_{\mathfrak{p}}(u)$ of an element
$u\in\mathscr{O}_v^{\times}$ and establish some properties of this notion that we will need later.

Let $p > 2$ be the corresponding rational prime. The group of $p$-adic units $\mathbb{U}_p = \mathbb{Z}_p^{\times}$ has the natural filtration by the congruence subgroups
$$
\mathbb{U}_p^{(i)} = 1 + p^{i}\mathbb{Z}_p \ \ \text{for} \ \ i \in \mathbb{N}.
$$
It is well-known that
$$
\mathbb{U}_p = C \times \mathbb{U}_p^{(1)}
$$
where $C$ is the cyclic group of order $(p-1)$ consisting of all roots of unity in $\mathbb{Q}_p$. Furthermore, the logarithmic map yields a continuous isomorphism $\mathbb{U}_p^{(i)} \to p^i\mathbb{Z}_p$, which implies that for any $u \in \mathbb{U}_p \setminus C$, the closure of the cyclic group
generated by $u$ has a decomposition of the form
$$
\overline{\langle u \rangle} = C' \times \mathbb{U}_p^{(\ell)}
$$
for some subgroup  $C' \subset C$ and some integer $\ell = \ell_p(u) \geq 1$ which we will refer to as the $p$-{\it level} of $u$. We also set $\ell_p(u) = \infty$ for $u \in C$.

Returning now to a $\mathbb{Q}$-split prime $\mathfrak{p}$ of $k$ and keeping the above notations, we define the $\mathfrak{p}$-{\it level} $\ell_{\mathfrak{p}}(u)$ of $u \in \mathscr{O}_v^{\times}$ as the the $p$-level of the element in $\mathbb{U}_p$ that corresponds to $u$ under the natural identification $\mathscr{O}_v = \Bbb Z_p$. We will need the following.
\begin{lem}\label{c9}
Let $\mathfrak{p}$ be a $\Bbb Q$-split prime in $\mathscr{O}$, let $p$ be the corresponding
rational prime, and $v = v_{\frak p}$ the corresponding valuation.  Suppose we are given
an integer $d\geq 1$ not divisible by $p$, a unit $u \in \mathscr{O}_v^{\times}$ of
infinite order having $\mathfrak{p}$-level $s = \ell_{\mathfrak{p}}(u)$, an integer $n_s$, and
an element $c \in \mathscr{O}_v$ such that $u^{n_s} \equiv c\ (\mathrm{mod}\: \mathfrak{p}^s)$.
Then for any $t \geq s$ there exists an integer $n_t \equiv n_s\ (\mathrm{mod}\: d)$ for which
$u^{n_t} \equiv c\ (\mathrm{mod}\: \mathfrak{p}^t)$.
\end{lem}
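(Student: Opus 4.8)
The plan is to translate everything into the group of $p$-adic units and argue by induction on $t$. Since $\mathfrak{p}$ is $\mathbb{Q}$-split, Lemma~\ref{prim} lets us identify $\mathscr{O}_v$ with $\mathbb{Z}_p$ and $\mathscr{O}/\mathfrak{p}^n$ with $\mathbb{Z}/p^n\mathbb{Z}$; note that $c$ is automatically a unit of $\mathscr{O}_v$, because $u^{n_s}$ is one and $s\geq 1$. As $u$ has infinite order, its $\mathfrak{p}$-level $s$ is finite and $\overline{\langle u\rangle}=C'\times\mathbb{U}_p^{(s)}$ for some $C'\subseteq C$. Writing $u=\omega u_1$ with $\omega\in C$ (so $\omega^{p-1}=1$) and $u_1\in\mathbb{U}_p^{(1)}$, and projecting $\mathbb{U}_p=C\times\mathbb{U}_p^{(1)}$ onto the second factor, we get $\overline{\langle u_1\rangle}=\mathbb{U}_p^{(s)}$; equivalently $u_1\in\mathbb{U}_p^{(s)}\setminus\mathbb{U}_p^{(s+1)}$, i.e. the logarithm $\log u_1$ has $p$-adic valuation exactly $s$.

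I will show by induction on $t\geq s$ that there is an integer $n_t\equiv n_s\ (\mathrm{mod}\ d)$ with $u^{n_t}\equiv c\ (\mathrm{mod}\ \mathfrak{p}^t)$, which is precisely the assertion of the lemma. For $t=s$ take $n_t=n_s$. Suppose $n_t$ has been found for some $t\geq s$, and set $L=\mathrm{lcm}\!\left(d,\,(p-1)p^{\,t-s}\right)$. Since $p\nmid d$, the $p$-adic valuation of $L$ equals $t-s$; as $(p-1)\mid L$ we have $u^{L}=u_1^{L}$, so $\log u_1^{L}=L\log u_1$ has $p$-adic valuation $(t-s)+s=t$. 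Hence $u^{L}\in\mathbb{U}_p^{(t)}\setminus\mathbb{U}_p^{(t+1)}$, and therefore the image of $u^{L}$ generates the cyclic group $\mathbb{U}_p^{(t)}/\mathbb{U}_p^{(t+1)}\cong\mathbb{Z}/p\mathbb{Z}$.

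By the inductive hypothesis $u^{n_t}c^{-1}\in\mathbb{U}_p^{(t)}$, so I can pick an integer $j$ with $(u^{L})^{j}\equiv (u^{n_t}c^{-1})^{-1}\ (\mathrm{mod}\ \mathbb{U}_p^{(t+1)})$. Put $n_{t+1}=n_t+Lj$. Then $u^{n_{t+1}}c^{-1}=(u^{L})^{j}\,(u^{n_t}c^{-1})\in\mathbb{U}_p^{(t+1)}$, that is, $u^{n_{t+1}}\equiv c\ (\mathrm{mod}\ \mathfrak{p}^{t+1})$, while $d\mid L$ gives $n_{t+1}\equiv n_t\equiv n_s\ (\mathrm{mod}\ d)$. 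This completes the induction.

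I expect the only real obstacle to be the choice of the increment $m=Lj$, which must meet three competing demands: it has to be a multiple of $d$ (to preserve the residue mod $d$), it has to push $u^{m}$ into $\mathbb{U}_p^{(t)}$ (which forces the factor $p-1$, to kill the root-of-unity part $\omega$, together with the factor $p^{t-s}$), and it must still allow $u^{m}$ to hit an arbitrary class in the one-dimensional quotient $\mathbb{U}_p^{(t)}/\mathbb{U}_p^{(t+1)}$. The choice $L=\mathrm{lcm}(d,(p-1)p^{t-s})$ handles the first two; the third works precisely because $p\nmid d$, which pins the $p$-adic valuation of $L$ at $t-s$ and hence makes $u^{L}$ have level exactly $t$ rather than larger, so that it still surjects onto $\mathbb{U}_p^{(t)}/\mathbb{U}_p^{(t+1)}$.
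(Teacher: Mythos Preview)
Your proof is correct, but it takes a different and more constructive route than the paper's.  The paper does not induct on $t$: after reducing to $\mathbb{Z}_p$, it observes that the desired conclusion is equivalent to
\[
u^{n_s}\,\langle u^d\rangle \cap c\,\mathbb{U}_p^{(t)} \neq \emptyset,
\]
and since $c\,\mathbb{U}_p^{(t)}$ is open it suffices to show that the closure $u^{n_s}\,\overline{\langle u^d\rangle}$ meets $c\,\mathbb{U}_p^{(t)}$.  Because $\gcd(d,p)=1$, the element $u^d$ still has level $s$, so $\overline{\langle u^d\rangle}\supset\mathbb{U}_p^{(s)}$; hence $c=u^{n_s}\cdot(u^{-n_s}c)$ with $u^{-n_s}c\in\mathbb{U}_p^{(s)}$ already lies in $u^{n_s}\,\overline{\langle u^d\rangle}$, and the intersection is nonempty.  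Your argument trades this two-line topological step for an explicit lift through each layer $\mathbb{U}_p^{(t)}/\mathbb{U}_p^{(t+1)}$ using the increment $L=\mathrm{lcm}(d,(p-1)p^{t-s})$.  What you gain is an effective recipe for $n_t$ (one can read off the size of $n_t-n_s$); what the paper gains is brevity and a cleaner explanation of where the hypothesis $p\nmid d$ enters, namely exactly to keep the level of $u^d$ equal to $s$.
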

\begin{proof}
In view of the identification $\mathscr{O}_v = \Bbb Z_p$, it is enough to prove the corresponding
statement for $\Bbb Z_p$. More precisely, we need to show the following: {\it Let $u\in\Bbb U_p$
be a unit of infinite order and $p$-level $s = \ell_p(u)$. If $c \in \mathbb{U}_p$ and
$n_s \in \mathbb{Z}$ are such that $u^{n_s} \equiv c$ $(\mathrm{mod}\: p^s)$, then
for any $t \geq s$ there exists $n_t \equiv n_s (\mathrm{mod}\: d)$ such that $u^{n_t} \equiv c$
$(\mathrm{mod}\: p^t)$.}  Thus, we have that $u^{n_s} \in c \mathbb{U}_p^{(s)}$, and
we wish to show that $$
u^{n_s} \cdot \langle u^d \rangle \bigcap c \mathbb{U}_p^{(t)} \neq \emptyset.
$$
Since $c \mathbb{U}_p^{(t)}$ is open, it is enough to show that
\begin{equation}\label{E:nonempty}
u^{n_s} \cdot \overline{\langle u^d \rangle} \bigcap c \mathbb{U}_p^{(t)} \neq \emptyset.
\end{equation}
But since $\ell_p(u) = s$ and $d$ is prime to $p$, we have the inclusion $\overline{\langle u^d \rangle} \supset \mathbb{U}_p^{(s)}$, and (\ref{E:nonempty}) is obvious.
\end{proof}

\noindent {\bf 2. Dirichlet's Theorem for $\Bbb Q$-split primes.} The following known (see the remark below)
result gives the existence of $\Bbb Q$-split primes in arithmetic progressions.
\begin{thm}\label{dit}
Let $\mathscr{O}$ be the ring of $S$-integers in a number field $k$ for some finite
$S \subset V^k$ containing $V_{\infty}^k$. If nonzero $a, b \in \mathscr{O}$ are
relatively prime (i.e., $a\mathscr{O} + b\mathscr{O} = \mathscr{O}$) then there exist
infinitely many principal $\Bbb Q$-split prime ideals $\mathfrak{p}$ of $\mathscr{O}$
with a generator $\pi$ such that $\pi \equiv a$ $(\mathrm{mod}\: b\mathscr{O})$ and
$\pi > 0$ in all real completions of $k$.
\end{thm}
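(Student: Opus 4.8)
The plan is to reduce the statement, by class field theory, to the existence of infinitely many primes of $\mathscr{O}$ in a single ray class, and then to observe that $\mathbb{Q}$-splitness holds for all but a set of primes of Dirichlet density zero. First, using weak approximation, fix an element $\alpha_0 \in \mathscr{O}$ with $\alpha_0 \equiv a \pmod{b\mathscr{O}}$ and $\alpha_0 > 0$ in every real completion of $k$; such an $\alpha_0$ exists because the congruence involves only the finitely many primes dividing $b$, positivity is imposed at the disjoint archimedean places, and $a$ is a unit modulo $b\mathscr{O}$ as $a\mathscr{O}+b\mathscr{O}=\mathscr{O}$. Let $\mathfrak{m}$ be the modulus consisting of the ideal $b\mathscr{O}$ together with all real places of $k$, and let $C = C_{\mathfrak{m}}$ be the corresponding ray class group of $\mathscr{O}$ (a finite group, a quotient of the ray class group of $\mathscr{O}_k$). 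By Artin reciprocity, $C \cong \mathrm{Gal}(K/k)$ for a finite abelian extension $K/k$, the ray class field of $\mathscr{O}$ modulo $\mathfrak{m}$, with the Artin map sending a prime $\mathfrak{p}$ of $\mathscr{O}$ coprime to $b$ to $\mathrm{Frob}_{\mathfrak{p}}$; put $c = [(\alpha_0)] \in C$ and let $\sigma_c \in \mathrm{Gal}(K/k)$ be the matching element.

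The crucial elementary observation is that lying in the single class $c$ already forces the desired generator. If $\mathfrak{p}$ is a prime ideal of $\mathscr{O}$ coprime to $b$ with $[\mathfrak{p}] = c$, then $\mathfrak{p}\cdot(\alpha_0)^{-1} = (\delta)$ for some $\delta \in k^{\times}$ that is $\equiv 1$ modulo $\mathfrak{m}$ (that is, a unit congruent to $1$ at the primes dividing $b$, and positive at every real place). Then $\pi := \alpha_0\delta$ generates $\mathfrak{p}$, it lies in $\mathscr{O}$ because $\mathfrak{p}$ is an integral ideal, it is positive in all real completions, and a short valuation check shows $\alpha_0(\delta - 1) \in b\mathscr{O}$, whence $\pi - a = \alpha_0(\delta - 1) + (\alpha_0 - a) \in b\mathscr{O}$. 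Thus it suffices to produce infinitely many $\mathbb{Q}$-split prime ideals $\mathfrak{p}$ of $\mathscr{O}$ with $\mathrm{Frob}_{\mathfrak{p}} = \sigma_c$ in $K/k$.

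For this, the Chebotarev density theorem shows that the set of primes $\mathfrak{p}$ of $\mathscr{O}$ with $\mathrm{Frob}_{\mathfrak{p}} = \sigma_c$ has Dirichlet density $1/[K:k] > 0$, whereas the primes of $k$ that fail to be $\mathbb{Q}$-split---those over the rational prime $2$, those ramified over $\mathbb{Q}$, and those of residue degree $\geq 2$ over $\mathbb{Q}$---form a set of density $0$: the first two families are finite, and a prime of residue degree $\geq 2$ above a rational prime $p$ has norm $\geq p^2$, so these contribute only a convergent tail to $\sum N\mathfrak{p}^{-s}$ as $s\to 1^+$, while $\sum_{\mathfrak{p}}N\mathfrak{p}^{-s}\to\infty$. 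Hence $\{\mathfrak{p} : \mathrm{Frob}_{\mathfrak{p}} = \sigma_c \text{ and } \mathfrak{p}\ \mathbb{Q}\text{-split}\}$ still has density $1/[K:k]$, so it is infinite, and by the previous paragraph each of its members furnishes a generator $\pi$ with the required properties.

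The one point that needs genuine care is the bookkeeping in the first two paragraphs---producing $\alpha_0$ and, above all, verifying that the ray-class condition $[\mathfrak{p}] = c$ really does supply a totally positive generator $\equiv a \pmod{b\mathscr{O}}$; once that translation is in place, the rest is a routine combination of Artin reciprocity, the Chebotarev density theorem, and the sparseness of primes of residue degree $> 1$. (One could instead force $\mathbb{Q}$-splitness by intersecting the Chebotarev condition for $K/k$ with ``$p$ splits completely in the Galois closure of $k/\mathbb{Q}$'', but that route requires a separate linear-disjointness check and is less clean than the density-zero argument above.)
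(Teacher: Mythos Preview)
Your argument is correct, but it proceeds along a genuinely different route from the paper's. You apply Chebotarev to the \emph{abelian} ray class field $K/k$ and then discard the non-$\mathbb{Q}$-split primes by a density-zero argument (primes of residue degree $\geq 2$ over $\mathbb{Q}$ contribute only a convergent sum at $s=1$). The paper instead lifts the ray-class automorphism $\theta_{\mathfrak{b}}(a)\in\mathrm{Gal}(K(\mathfrak{b})/k)$ to an element $\tau$ of $\mathrm{Gal}(F/\mathbb{Q})$, where $F$ is the Galois closure of $K(\mathfrak{b})$ over $\mathbb{Q}$, and applies Chebotarev once to this (generally \emph{non-abelian}) extension of $\mathbb{Q}$; the condition $\mathrm{Fr}_{F/\mathbb{Q}}(w\vert v_p)=\tau$ simultaneously forces $k_v=\mathbb{Q}_p$ (because $\tau\vert k=\mathrm{id}$) and the correct ray class. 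Your approach is more elementary and entirely adequate for this theorem; the paper's approach is chosen because the same framework (Proposition~3.5 and Chebotarev over a Galois closure over $\mathbb{Q}$) is what drives the much more delicate Theorem~3.7, where one must impose several further Frobenius constraints at once and a density-subtraction trick would no longer suffice. One minor point: the phrase ``weak approximation'' for producing $\alpha_0$ is slightly loose---what you actually use is that $b\mathscr{O}$ (which contains the full lattice $b\mathscr{O}_k$) meets every open cone in $\prod_{v\ \mathrm{real}}k_v$, so $a+b\mathscr{O}$ contains totally positive elements---but the conclusion is correct.
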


The proof follows the same general strategy as the proof of Dirichlet's Theorem in [BMS] - see Theorem A.10 in the Appendix on Number Theory. First, we will quickly review some basic facts from global class field theory (cf., for example, [CF], Ch. VII) and fix some notations. Let $J_k$ denote the {\it group of ideles} of $k$ with the natural topology; as usual, we identify $k^{\times}$ with the (discrete) {\it subgroup of principal ideles} in $J_k$. Then for every open subgroup $\mathscr{U} \subset J_k$ of finite index containing $k^{\times}$ there exists a finite abelian Galois extension $L/k$ and a continuous surjective homomorphism $\alpha_{L/k} \colon J_k \to \mathrm{Gal}(L/k)$ (known as the {\it norm residue map}) such that

\vskip2mm

$\bullet$ $\mathscr{U} = \mathrm{Ker}\: \alpha_{L/k} = N_{L/k}(J_L)k^{\times}$;

\vskip1mm

$\bullet$ \parbox[t]{15.5cm}{for every nonarchimedean $v \in V^k$ which is unramified in $L$
we let $\mathrm{Fr}_{L/k}(v)$ denote the Frobenius automorphism of $L/k$ at $v$ (i.e.,
the Frobenius automorphism $\mathrm{Fr}_{L/k}(w \vert v)$ associated to some (equivalently, any)
extension $w \vert v$) and let $\mathbf{i}(v) \in J_k$ be an idele with the components $$
\mathbf{i}(v)_{v'} = \left\{ \begin{array}{lcl} 1 & , & v' \neq v \\ \pi_v & , & v' = v
\end{array}  \right.,
$$
where $\pi_v\in k_v$ is a uniformizer; then $\alpha_{L/k}(\mathbf{i}(v)) = \mathrm{Fr}_{L/k}(v)$.}

\vskip2mm

\noindent For our fixed finite subset $S \subset V^k$ containing $V^k_{\infty}$, we define the following open subgroup of $J_k$:
$$
U_S := \prod_{v \in S} k_v^{\times} \times \prod_{v \in V^k \setminus S} U_v.
$$
Then the abelian extension of $k$ corresponding to the subgroup $\mathscr{U}_S := U_S k^{\times}$ will be called the {\it Hilbert $S$-class field} of $k$ and denoted $K$ throughout the rest of the paper.

Next, we will introduce the idelic $S$-analogs of {\it ray groups}.  Let $\mathfrak{b}$
be a nonzero ideal of $\mathscr{O} = \mathscr{O}_{k , S}$ with the prime factorization
\begin{equation}\label{E:X003}
\mathfrak{b} = \mathfrak{p}_1^{n_1} \cdots \mathfrak{p}_t^{n_t},
\end{equation}
let $v_i = v_{\mathfrak{p}_i}$ be the valuation in $V^k \setminus S$ associated with
$\mathfrak{p}_i$, and let $V(\mathfrak{b}) = \{v_1, \ldots , v_t\}$.  We then define an open subgroup
$$
R_S(\mathfrak{b}) = \prod_{v\in V^k} R_v
$$
where the open subgroups $R_v\subseteq k_v^\times$ are defined as follows.  For $v$ real, we let
$R_v$ be the subgroup of positive elements, letting $R_v=k_v^\times$ for all other $v\in S$, and
setting $R_v=U_v$ for all $v\notin S\cup V(\frak b)$.  It remains to define $R_v$ for
$v=v_i\in V(\frak b)$, in which case we set it to be the congruence subgroup $U_{v_i}^{(n_i)}$ of
$U_{v_i}$ modulo $\hat{\frak p}^{n_i}_{v_i}$.
We then let $K(\mathfrak{b})$ denote the abelian extension of $k$ corresponding to
$\mathbf{R}_S(\mathfrak{b}):= R_S(\mathfrak{b}) k^{\times}$ (``ray class field").
(Obviously, $K(\mathfrak{b})$ contains $K$ for any nonzero ideal $\mathfrak{b}$ of $\mathscr{O}$.)
Furthermore, given $c \in k^{\times}$, we let $\mathbf{j}_{\mathfrak{b}}(c)$ denote
the idele with the following components:
$$
\mathbf{j}_{\mathfrak{b}}(c)_v = \left\{ \begin{array}{lcl}
c & , & v \in V(\mathfrak{b}), \\
1 & , & v \notin V(\mathfrak{b}).
\end{array}    \right.
$$
Then $\theta_{\mathfrak{b}} \colon k^{\times} \to \mathrm{Gal}(K(\mathfrak{b})/k)$ defined by $c \mapsto \alpha_{K(\mathfrak{b})/k}(\mathbf{j}_{\mathfrak{b}}(c))^{-1}$ is a group homomorphism.

\vskip1mm

The following lemma summarizes some simple properties of these definitions.

\begin{lem}\label{dis}
Let $\mathfrak{b} \subset \mathscr{O}$ be a nonzero ideal.

\vskip2mm

\noindent {\rm (a)} \parbox[t]{15.5cm}{If a nonzero $c \in \mathscr{O}$ is relatively prime to $\mathfrak{b}$ (i.e. $c\mathscr{O} + \mathfrak{b} =
\mathscr{O}$) then $\theta_{\mathfrak{b}}(c)$ restricts to the Hilbert $S$-class field $K$ trivially.}

\vskip2mm

\noindent {\rm (b)} \parbox[t]{15.5cm}{If nonzero $c_1 , c_2 \in \mathscr{O}$ are both relatively prime to $\mathfrak{b}$ then $c_1 \equiv c_2\ (\mathrm{mod}\: \mathfrak{b})$ is equivalent to \begin{equation}\label{E:X002}\mathrm{pr}_{\mathfrak{b}}\left(\mathbf{j}_{\mathfrak{b}}(c_1) R_S(\mathfrak{b})\right) = \mathrm{pr}_{\mathfrak{b}}\left(\mathbf{j}_{\mathfrak{b}}(c_2) R_S(\mathfrak{b})\right)\end{equation} where $\displaystyle \mathrm{pr}_{\mathfrak{b}} \colon J_k \to \prod_{v \in V(\mathfrak{b})} k_v^{\times}$ is the natural projection.}

\end{lem}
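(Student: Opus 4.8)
The plan is to unwind the definitions and reduce both statements to the behavior of the norm residue map on the local components at $V(\mathfrak{b})$. First I would prove (a). Since $c\in\mathscr{O}$ is relatively prime to $\mathfrak{b}$, we have $v(c)=0$ for every $v\in V(\mathfrak{b})$, so $\mathbf{j}_{\mathfrak{b}}(c)$ has components lying in $U_v$ at every $v\in V(\mathfrak{b})$, trivial components outside, and in particular lies in $U_S$. Hence $\mathbf{j}_{\mathfrak{b}}(c)\in U_S\subseteq\mathscr{U}_S=U_Sk^{\times}$, which is exactly $\mathrm{Ker}\,\alpha_{K/k}$. Therefore $\alpha_{K(\mathfrak{b})/k}(\mathbf{j}_{\mathfrak{b}}(c))$ restricts trivially to $K$ by functoriality of the norm residue map (the restriction $\mathrm{Gal}(K(\mathfrak{b})/k)\to\mathrm{Gal}(K/k)$ intertwines $\alpha_{K(\mathfrak{b})/k}$ and $\alpha_{K/k}$, because $K\subseteq K(\mathfrak{b})$), and the same holds for the inverse, giving $\theta_{\mathfrak{b}}(c)\vert_K=\mathrm{id}$.

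Next I would prove (b). The key point is that for $c\in\mathscr{O}$ relatively prime to $\mathfrak{b}$, the projection $\mathrm{pr}_{\mathfrak{b}}(\mathbf{j}_{\mathfrak{b}}(c))$ is simply the vector $(c,\ldots,c)\in\prod_{v\in V(\mathfrak{b})}k_v^{\times}$ with all components units. Thus the image $\mathrm{pr}_{\mathfrak{b}}(\mathbf{j}_{\mathfrak{b}}(c)R_S(\mathfrak{b}))$ depends only on the class of $(c,\ldots,c)$ modulo $\mathrm{pr}_{\mathfrak{b}}(R_S(\mathfrak{b}))=\prod_{i=1}^t U_{v_i}^{(n_i)}$. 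Now I would observe that by the isomorphism $\mathscr{O}/\mathfrak{b}\simeq\prod_{i=1}^t\mathscr{O}/\mathfrak{p}_i^{n_i}\simeq\prod_{i=1}^t U_{v_i}/U_{v_i}^{(n_i)}$ (the last step using $\mathscr{O}_{v_i}/\hat{\mathfrak{p}}_{v_i}^{n_i}\simeq\mathscr{O}/\mathfrak{p}_i^{n_i}$ and that a unit mod $\mathfrak{p}_i^{n_i}$ lifts to a local unit), the condition $c_1\equiv c_2\ (\mathrm{mod}\:\mathfrak{b})$ is equivalent to $c_1 c_2^{-1}\in U_{v_i}^{(n_i)}$ for every $i$, which is exactly equality (\ref{E:X002}). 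I should take a little care that $\mathfrak{b}\subset\mathscr{O}$ being a genuine ideal means only finitely many $\mathfrak{p}_i$ occur and none of them lies in $S$, so the factorization (\ref{E:X003}) and the identification above are valid.

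The main obstacle, such as it is, will be bookkeeping rather than mathematics: being careful about which ideles lie in which subgroups, and matching the adelic congruence condition with the ring-theoretic one via the chain of isomorphisms. In particular, for (b) one must note that although $\mathbf{j}_{\mathfrak{b}}(c)$ itself need not lie in $R_S(\mathfrak{b})$ (its components at $v\in V(\mathfrak{b})$ are units but not necessarily in $U_{v_i}^{(n_i)}$, and at $v\in V(\mathfrak{b})$ with $v$ archimedean there is nothing to worry about since $V(\mathfrak{b})\subseteq V^k_f$), the only thing that matters is the image under $\mathrm{pr}_{\mathfrak{b}}$ modulo $\mathrm{pr}_{\mathfrak{b}}(R_S(\mathfrak{b}))$. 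Everything else is a direct translation through the definitions, so the lemma follows without difficulty.
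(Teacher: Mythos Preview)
Your proposal is correct and follows essentially the same route as the paper: for (a) you observe that $\mathbf{j}_{\mathfrak{b}}(c)\in U_S\subset\mathscr{U}_S=\mathrm{Ker}\,\alpha_{K/k}$ and invoke functoriality of the norm residue map, and for (b) you reduce the coset equality under $\mathrm{pr}_{\mathfrak{b}}$ to the local congruences $c_1\equiv c_2\ (\mathrm{mod}\ \hat{\mathfrak{p}}_{v_i}^{n_i})$ via $\mathrm{pr}_{\mathfrak{b}}(R_S(\mathfrak{b}))=\prod_i U_{v_i}^{(n_i)}$, exactly as the paper does. One small notational slip: your chain of isomorphisms should read $(\mathscr{O}/\mathfrak{p}_i^{n_i})^{\times}\simeq U_{v_i}/U_{v_i}^{(n_i)}$ rather than $\mathscr{O}/\mathfrak{p}_i^{n_i}\simeq U_{v_i}/U_{v_i}^{(n_i)}$, but since $c_1,c_2$ are units modulo $\mathfrak{b}$ this does not affect the argument.
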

\begin{proof}
(a): Since $c$ is relatively prime to $\mathfrak{b}$, we have $\mathbf{j}_{\mathfrak{b}}(c) \in U_S$. So, using the functoriality properties of the norm residue map, we obtain
$$
\theta_{\mathfrak{b}}(c) \vert K = \alpha_{K(\mathfrak{b})/k}(\mathbf{j}_{\mathfrak{b}}(c))^{-1} \vert K = \alpha_{K/k}(\mathbf{j}_{\mathfrak{b}}(c))^{-1} = \mathrm{id}_K
$$
because $\mathbf{j}_{\mathfrak{b}}(c) \in U_S \subset \mathscr{U}_S = \mathrm{Ker}\: \alpha_{K/k}$, as required.

\vskip2mm

(b): As above, let (\ref{E:X003}) be the prime factorization of $\mathfrak{b}$, let
$v_i = v_{\mathfrak{p}_i} \in V^k \setminus S$ be the valuation associated with $\mathfrak{p}_i$.
Then for any $c_1 , c_2 \in \mathscr{O}$, the congruence
$c_1 \equiv c_2\ (\mathrm{mod}\: \mathfrak{b})$ is equivalent to
\begin{equation}\label{E:X004}
c_1 \equiv c_2\ (\mathrm{mod}\: \hat{\mathfrak{p}}_{v_i}^{n_i}) \ \ \text{for all} \ \ i = 1, \ldots , t.
\end{equation}
On the other hand, for any $v \in V_f^k$ and any $u_1 , u_2 \in U_v$, the congruence $u_1 \equiv u_2\ (\mathrm{mod}\: \hat{\mathfrak{p}}_v^n)$ for $n \geq 1$ is equivalent to
$$
u_1 U_v^{(n)} = u_2 U_v^{(n)},
$$
where $U_v^{(n)}$ is the congruence subgroup of $U_v$ modulo $\hat{\mathfrak{p}}_v^n$. Thus, for (nonzero) $c_1 , c_2 \in \mathscr{O}$ prime to $\mathfrak{b}$, the conditions (\ref{E:X002}) and (\ref{E:X004}) are equivalent, and our assertion follows.
\end{proof}

We will now establish a result needed for the proof of Theorem \ref{dit} and its refinements.

\begin{prop}\label{dir}
Let $\mathfrak{b}$ be a nonzero ideal of $\mathscr{O}$,
let $a \in \mathscr{O}$ be relatively prime to $\mathfrak{b}$, and
let $F$ be a finite Galois extension of $\mathbb{Q}$ that contains $K(\mathfrak{b})$.
Assume that a rational prime $p$ is unramified in $F$ and there exists an extension $w$ of
the $p$-adic valuation $v_p$ to $F$ such that
$\mathrm{Fr}_{F/\Bbb Q}(w \vert v_p) \vert K(\mathfrak{b}) = \theta_{\mathfrak{b}}(a)$.
If the restriction $v$ of $w$ to $k$ does not belong to $S \cup V(\mathfrak{b})$ then:

\vskip2mm

\noindent {\rm (a)} $k_v = \mathbb{Q}_p$;

\vskip1mm

\noindent {\rm (b)} \parbox[t]{14.5cm}{the prime ideal $\mathfrak{p} = \mathfrak{p}_v$ of
$\mathscr{O}$ corresponding to $v$ is principal with a generator $\pi$ satisfying
$\pi \equiv a\ (\mathrm{mod}\: \mathfrak{b})$ and $\pi > 0$ in every real completion of $k$.}
\end{prop}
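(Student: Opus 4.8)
The plan is to deduce both assertions from the explicit description of the norm residue map $\alpha_{K(\mathfrak{b})/k}$ recalled above, by comparing the Frobenius of $w$ over $\mathbb{Q}$ with the Frobenius of $v$ over $k$. Note first that $F/k$ is automatically Galois (since $k \subseteq K(\mathfrak{b}) \subseteq F$ with $F/\mathbb{Q}$ Galois) and that $v$ is unramified in $F$, hence in $K(\mathfrak{b})$, because $p$ is unramified in $F$, so $e(w \vert v_p) = 1$ and a fortiori $e(w \vert v) = 1$. For (a): since $k \subseteq K(\mathfrak{b})$ and $\theta_{\mathfrak{b}}(a) \in \mathrm{Gal}(K(\mathfrak{b})/k)$ fixes $k$ pointwise, the hypothesis $\mathrm{Fr}_{F/\mathbb{Q}}(w \vert v_p) \vert K(\mathfrak{b}) = \theta_{\mathfrak{b}}(a)$ forces $\sigma := \mathrm{Fr}_{F/\mathbb{Q}}(w \vert v_p)$ itself to fix $k$. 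Since $p$ is unramified in $F$, the decomposition group $D(w \vert v_p) \subseteq \mathrm{Gal}(F/\mathbb{Q})$ is cyclic, generated by $\sigma$, so $D(w \vert v_p) \subseteq \mathrm{Gal}(F/k)$; hence the decomposition group of $w$ over $k$ coincides with $D(w \vert v_p)$, which gives $[F_w : k_v] = [F_w : \mathbb{Q}_p]$, i.e.\ $k_v = \mathbb{Q}_p$.

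For (b): by (a) the residue degree $f(v \vert v_p) = [k_v : \mathbb{Q}_p]$ equals $1$, so the standard restriction formula $\mathrm{Fr}_{F/k}(w \vert v) = \sigma^{f(v \vert v_p)}$ yields $\mathrm{Fr}_{F/k}(w \vert v) = \sigma$; restricting to $K(\mathfrak{b})$ and invoking the hypothesis, $\mathrm{Fr}_{K(\mathfrak{b})/k}(v) = \theta_{\mathfrak{b}}(a) = \alpha_{K(\mathfrak{b})/k}(\mathbf{j}_{\mathfrak{b}}(a))^{-1}$. On the other hand, as $v$ is unramified in $K(\mathfrak{b})$, the description of $\alpha_{K(\mathfrak{b})/k}$ gives $\alpha_{K(\mathfrak{b})/k}(\mathbf{i}(v)) = \mathrm{Fr}_{K(\mathfrak{b})/k}(v)$, so $\alpha_{K(\mathfrak{b})/k}(\mathbf{i}(v)\,\mathbf{j}_{\mathfrak{b}}(a)) = \mathrm{id}$ and therefore $\mathbf{i}(v)\,\mathbf{j}_{\mathfrak{b}}(a) \in \mathrm{Ker}\,\alpha_{K(\mathfrak{b})/k} = R_S(\mathfrak{b})\,k^{\times}$.

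Writing $\mathbf{i}(v)\,\mathbf{j}_{\mathfrak{b}}(a) = \pi\, r$ with $\pi \in k^{\times}$ and $r = (r_{v'}) \in R_S(\mathfrak{b})$, I would then read off the properties of $\pi$ from the components of this identity, using $v \notin S \cup V(\mathfrak{b})$. At $v$: $\pi_v = \pi\, r_v$ with $r_v \in U_v$, so $v(\pi) = 1$. At $v' \in V^k_f \setminus (S \cup V(\mathfrak{b}) \cup \{v\})$: $1 = \pi\, r_{v'}$ with $r_{v'} \in U_{v'}$, so $v'(\pi) = 0$. At $v_i \in V(\mathfrak{b})$: $a = \pi\, r_{v_i}$ with $r_{v_i} \in U_{v_i}^{(n_i)}$, so $v_i(\pi) = v_i(a) = 0$ and $\pi - a = a(r_{v_i}^{-1} - 1) \in \hat{\mathfrak{p}}_{v_i}^{n_i}$. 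At a real place $v'$: $1 = \pi\, r_{v'}$ with $r_{v'} > 0$. The first two facts give $\pi \in \mathscr{O}$ and $\pi\mathscr{O} = \mathfrak{p}_v = \mathfrak{p}$; since $\mathfrak{b} = \prod_i \mathfrak{p}_i^{n_i}$ with the $\mathfrak{p}_i$ distinct, the relations $\pi - a \in \hat{\mathfrak{p}}_{v_i}^{n_i} \cap \mathscr{O} = \mathfrak{p}_i^{n_i}$ give $\pi \equiv a\ (\mathrm{mod}\: \mathfrak{b})$; and the last relation gives $\pi > 0$ in every real completion of $k$. This is precisely (b).

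I expect the only delicate point to be the bookkeeping with Frobenius elements — pinning down the exponent $f(v \vert v_p)$ in the restriction formula and keeping straight the directions of the maps $\alpha_{K(\mathfrak{b})/k}$ and $\theta_{\mathfrak{b}}$ — after which both parts come down to the routine componentwise inspection of the idele identity indicated above.
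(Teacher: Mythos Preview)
Your proposal is correct and follows essentially the same route as the paper: both deduce (a) from the fact that $\mathrm{Fr}_{F/\mathbb{Q}}(w\vert v_p)$ fixes $k$ pointwise, then use $f(v\vert v_p)=1$ to identify $\mathrm{Fr}_{F/k}(w\vert v)$ with $\mathrm{Fr}_{F/\mathbb{Q}}(w\vert v_p)$, conclude $\mathbf{i}(v)\mathbf{j}_{\mathfrak{b}}(a)\in R_S(\mathfrak{b})k^{\times}$, and read off the properties of $\pi$ componentwise. The only cosmetic difference is that the paper packages the congruence $\pi\equiv a\ (\mathrm{mod}\:\mathfrak{b})$ as an appeal to Lemma~\ref{dis}(b), whereas you verify it directly at each $v_i\in V(\mathfrak{b})$.
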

(We note since $v$ is unramified in $F$ which contains $K(\mathfrak{b})$, we in fact
{\it automatically} have that $v \notin V(\mathfrak{b})$.)
\begin{proof}
(a): Since the Frobenius $\mathrm{Fr}(w \vert v_p)$ generates $\mathrm{Gal}(F_w/\mathbb{Q}_p)$, our claim immediately follows from the fact that it acts trivially on $k$.

\vskip1mm

(b): According to (a), the local degree $[k_v:\Bbb Q_p]$ is 1, hence the residual degree
$f(v|v_p)$ is also 1, and therefore
$$
\mathrm{Fr}(w \vert v) = \mathrm{Fr}(w \vert v_p)^{f(v \vert v_p)} = \mathrm{Fr}(w \vert v_p).
$$
Thus,
$$
\alpha_{K(\mathfrak{b})/k}(\mathbf{i}(v)) = \mathrm{Fr}(w \vert v) \vert K(\mathfrak{b}) =
\theta_{\mathfrak{b}}(a) = \alpha_{K(\mathfrak{b})/k}(\mathbf{j}_{\mathfrak{b}}(a))^{-1},
$$
and therefore
$$
\mathbf{i}(v) \mathbf{j}_{\mathfrak{b}}(a) \in \mathrm{Ker}\: \alpha_{K(\mathfrak{b})/K} =
\mathbf{R}_S(\mathfrak{b}) = R_S(\mathfrak{b}) k^{\times}.
$$
So, we can write
\begin{equation}\label{E:X007}
\mathbf{i}(v) \mathbf{j}_{\mathfrak{b}}(a) = \mathbf{r} \pi \ \ \text{with} \ \ \mathbf{r} \in
R_S(\mathfrak{b}), \ \pi \in k^{\times}.
\end{equation}
Then
$$
\pi = \mathbf{i}(v) (\mathbf{j}_{\mathfrak{b}}(a) \mathbf{r}^{-1}).
$$
Since $a$ is prime to $\mathfrak{b}$, the idele $\mathbf{j}_{\mathfrak{b}}(a) \in U_S$, and then
$\mathbf{j}_{\mathfrak{b}}(a)\mathbf{r}^{-1}\in U_S$.  For any $v' \in V^k \setminus(S\cup\{v\})$,
the $v'$-component of $\mathbf{i}(v)$ is trivial, so we obtain that $\pi \in U_{v'}$.
On the other hand, the $v$-component of $\mathbf{i}(v)$ is a uniformizer $\pi_v$ of $k_v$ implying
that $\pi$ is also a uniformizer.  Thus, $\mathfrak{p} = \pi\mathscr{O}$ is precisely the prime
ideal associated with $v$.  For any real $v'$, the $v'$-components of $\mathbf{i}(v)$ and
$\mathbf{j}_{\mathfrak{b}}(a)$ are trivial, so $\pi$ equals the inverse of the $v'$-component of
$\mathbf{r}$, hence positive in $k_{v'}$.  Finally, it follows from (\ref{E:X007}) that
$$
\mathrm{pr}_{\mathfrak{b}}(\mathbf{j}_{\mathfrak{b}}(a)) = \mathrm{pr}_{\mathfrak{b}}(\mathbf{j}_{\mathfrak{b}}(\pi) \mathbf{r}),
$$
so $\pi\equiv\ a$ $(\mathrm{mod}\: \mathfrak{b})$ by Lemma \ref{dis}(b), as required.
\end{proof}

\renewcommand{\B}{\mathit{b}{\Cal O}}
$Proof$ $of$ $Theorem$ $\ref{dit}$.  Set $\mathfrak{b} = b\mathscr{O}$ and
$\sigma = \theta_{\mathfrak{b}}(a) \in \mathrm{Gal}(K(\mathfrak{b})/k)$.
Let $F$ be the Galois closure of $K(\mathfrak{b})$ over $\Bbb Q$, and
let $\tau \in \mathrm{Gal}(F/\mathbb{Q})$ be such that $\tau \vert K(\mathfrak{b}) = \sigma$.
Applying Chebotarev's Density Theorem (see [CF, Ch. VII, 2.4] or [BMS, A.6]), we find
infinitely many rational primes $p > 2$ for which the $p$-adic valuation $v_p$ is
unramified in $F$, does not lie below any valuations in $S \cup V(\mathfrak{b})$, and has an
extension $w$ to $F$ such that $\mathrm{Fr}_{F/\Bbb Q}(w \vert v_p) = \tau$.  Let $v = w \vert k$,
and let $\mathfrak{p} = \mathfrak{p}_v$ be the corresponding prime ideal of $\mathscr{O}$.
Since $p>2$, part (a) of Proposition \ref{dir} implies that $\mathfrak{p}$ is $\mathbb{Q}$-split.
Furthermore, part (b) of it asserts that $\mathfrak{p}$ has a generator $\pi$ such
that $\pi \equiv a$ $(\mathrm{mod}\: \mathfrak{b})$ and $\pi > 0$ in every real completion of $k$,
as required. \hfill $\Box$

\begin{rem}
Dong Quan Ngoc Nguyen pointed out to us that Theorem \ref{dit}, hence the essential part of Dirichlet's Theorem from \cite{BMS} (in particular, (A.11)), was known already to Hasse \cite[Satz 13]{Hasse}. In the current paper, however, we use the approach described in \cite{BMS} to establish the key Theorem \ref{T:Key}; the outline of the constructions from \cite{BMS} as well as the technical Lemma \ref{dis} and Proposition \ref{dir} are included for this purpose.
We note that in contrast to the argument in \cite{BMS}, our proofs of Theorems \ref{dit} and  \ref{T:Key} involve the application of Chebotarev's Density Theorem to {\it noncommutative} Galois extensions.
\end{rem}

We will now prove a statement from Galois theory that we will need in the next subsection.

\begin{lem}\label{L:Root}
Let $F/\mathbb{Q}$ be a finite Galois extension, and let $\kappa$ be an integer for which $F \cap \mathbb{Q}^{\mathrm{ab}} \subseteq
\mathbb{Q}(\zeta_{\kappa})$. Then $F(\zeta_{\kappa}) \cap \mathbb{Q}^{\mathrm{ab}} = \mathbb{Q}(\zeta_{\kappa})$.
\end{lem}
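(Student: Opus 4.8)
The plan is to exploit the fact that $\mathbb{Q}(\zeta_\kappa)$ is the maximal abelian subextension that we should expect inside $F(\zeta_\kappa)$, and that adjoining $\zeta_\kappa$ cannot create new abelian pieces beyond what was already forced by $F\cap\mathbb{Q}^{\mathrm{ab}}$. Write $L = F(\zeta_\kappa)$ and $M = L\cap\mathbb{Q}^{\mathrm{ab}}$. Since $\zeta_\kappa\in L$ we certainly have $\mathbb{Q}(\zeta_\kappa)\subseteq M$, so the content is the reverse inclusion $M\subseteq\mathbb{Q}(\zeta_\kappa)$. Both $F/\mathbb{Q}$ and $\mathbb{Q}(\zeta_\kappa)/\mathbb{Q}$ are Galois, hence so is $L/\mathbb{Q}$, and $M/\mathbb{Q}$ is Galois as an intersection of Galois extensions; this lets me work purely with the Galois group $G = \mathrm{Gal}(L/\mathbb{Q})$.

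The key step is to identify $\mathrm{Gal}(L/M)$ as the commutator subgroup $[G,G]$, since $M$ is by definition the maximal abelian subextension of $L/\mathbb{Q}$, i.e. the fixed field of $[G,G]$. So it suffices to show $\mathrm{Gal}(L/\mathbb{Q}(\zeta_\kappa))\subseteq [G,G]$, equivalently that $\mathrm{Gal}(L/\mathbb{Q}(\zeta_\kappa))$ is contained in every normal subgroup $N\trianglelefteq G$ with $G/N$ abelian. Consider the two normal subgroups $H_1 = \mathrm{Gal}(L/F)$ and $H_2 = \mathrm{Gal}(L/\mathbb{Q}(\zeta_\kappa))$. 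Their intersection $H_1\cap H_2 = \mathrm{Gal}(L/F(\zeta_\kappa)) = \mathrm{Gal}(L/L)$ is trivial because $L = F(\zeta_\kappa)$, so $G$ embeds into $G/H_1 \times G/H_2 = \mathrm{Gal}(F/\mathbb{Q})\times\mathrm{Gal}(\mathbb{Q}(\zeta_\kappa)/\mathbb{Q})$. Now take any $N\trianglelefteq G$ with $G/N$ abelian. Then the composite map $H_1 \hookrightarrow G \twoheadrightarrow G/N$ has abelian image, so $H_1$ maps into... more usefully, I want to show $H_2\subseteq N$. Since $G/N$ is abelian, $[G,G]\subseteq N$, so it is enough to show $H_2\subseteq[G,G]$ — but that is circular. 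The cleaner route: the subgroup $H_1 H_2$ — actually since $H_1\cap H_2 = 1$ and both are normal, $G \cong$ (subgroup of) $G/H_1\times G/H_2$, and the image of $H_2$ in $G/H_1\times G/H_2$ is $\{(\bar h, 1)\}$ for $h$ ranging over... no: under $G\hookrightarrow G/H_1\times G/H_2$, an element of $H_2$ maps to $(\text{its image in }G/H_1, 1)$. So $H_2$ maps isomorphically onto a subgroup of $G/H_1 = \mathrm{Gal}(F/\mathbb{Q})$, and this subgroup is normal there (it's the image of a normal subgroup). Its fixed field is $F\cap\mathbb{Q}^{\mathrm{ab}}\cdot(\text{something})$ — concretely, the fixed field in $F$ of the image of $H_2$ is $F\cap\mathbb{Q}(\zeta_\kappa)$, which by hypothesis equals $F\cap\mathbb{Q}^{\mathrm{ab}}$ if I can show $F\cap\mathbb{Q}(\zeta_\kappa)=F\cap\mathbb{Q}^{\mathrm{ab}}$; but the hypothesis only gives $F\cap\mathbb{Q}^{\mathrm{ab}}\subseteq\mathbb{Q}(\zeta_\kappa)$, so indeed $F\cap\mathbb{Q}^{\mathrm{ab}} = F\cap\mathbb{Q}^{\mathrm{ab}}\cap\mathbb{Q}(\zeta_\kappa)\subseteq F\cap\mathbb{Q}(\zeta_\kappa)\subseteq F\cap\mathbb{Q}^{\mathrm{ab}}$, giving equality. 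So the image of $H_2$ in $\mathrm{Gal}(F/\mathbb{Q})$ equals $\mathrm{Gal}(F/F\cap\mathbb{Q}^{\mathrm{ab}}) = [\mathrm{Gal}(F/\mathbb{Q}),\mathrm{Gal}(F/\mathbb{Q})]$.

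To finish: for any $N\trianglelefteq G$ with $G/N$ abelian, we have $[G,G]\subseteq N$; and $H_2$, under $G\twoheadrightarrow G/H_1 = \mathrm{Gal}(F/\mathbb{Q})$, maps onto $[\mathrm{Gal}(F/\mathbb{Q}),\mathrm{Gal}(F/\mathbb{Q})]$, which is the image of $[G,G]$. Combined with $H_1\cap H_2 = 1$: an element $h\in H_2$ agrees (in $G\hookrightarrow G/H_1\times G/H_2$) with an element of $[G,G]$ in the first coordinate and is trivial in the second; an element of $[G,G]\cap H_2$ also... Actually the slick finish is: $H_2 \subseteq H_1\cdot[G,G]$ since the image of $H_2$ in $G/H_1$ lies in the image of $[G,G]$; then $H_2 = H_2\cap(H_1\cdot[G,G])$, and because $[G,G]$ normalizes things appropriately and $H_1\cap H_2=1$... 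I will instead argue directly that $\mathbb{Q}(\zeta_\kappa)\supseteq M$ by Galois correspondence: $M = L^{[G,G]}$, and $\mathbb{Q}(\zeta_\kappa) = L^{H_2}$, so I need $H_2\subseteq [G,G]$. Take $h\in H_2$. Its image $\bar h$ in $G/H_1\cong\mathrm{Gal}(F/\mathbb{Q})$ lies in $[\mathrm{Gal}(F/\mathbb{Q}),\mathrm{Gal}(F/\mathbb{Q})]$, so $\bar h = \overline{c}$ for some $c\in[G,G]$, i.e. $h c^{-1}\in H_1$. Also $h\in H_2$ and $c\in[G,G]\subseteq$ ? — I need $c\in H_2$ too. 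This is where I expect the main obstacle: controlling the second coordinate. The resolution is that $[G,G]\subseteq H_2$ automatically, because $G/H_2 = \mathrm{Gal}(\mathbb{Q}(\zeta_\kappa)/\mathbb{Q})$ is abelian! Therefore $c\in H_2$, so $hc^{-1}\in H_1\cap H_2 = 1$, giving $h = c\in[G,G]$. Hence $H_2\subseteq[G,G]$, so $M = L^{[G,G]}\subseteq L^{H_2} = \mathbb{Q}(\zeta_\kappa)$, completing the proof. The one genuinely substantive input beyond formal group theory is the equality $F\cap\mathbb{Q}(\zeta_\kappa) = F\cap\mathbb{Q}^{\mathrm{ab}}$ extracted from the hypothesis, and the standard fact that the image of a normal subgroup under a quotient has fixed field equal to the corresponding intersection.
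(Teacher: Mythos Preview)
Your argument is correct and is essentially the paper's own: both use the embedding $G \hookrightarrow \mathrm{Gal}(F/\mathbb{Q}) \times \mathrm{Gal}(\mathbb{Q}(\zeta_\kappa)/\mathbb{Q})$ together with the observation (from the hypothesis) that $F\cap\mathbb{Q}(\zeta_\kappa)=F\cap\mathbb{Q}^{\mathrm{ab}}$ to identify $[G,G]$ with $[\mathrm{Gal}(F/\mathbb{Q}),\mathrm{Gal}(F/\mathbb{Q})]\times\{1\}$, which is exactly $H_2=\mathrm{Gal}(L/\mathbb{Q}(\zeta_\kappa))$. The paper phrases the conclusion as the degree equality $|[G,G]|=|[H,H]|$, while you give the equivalent element-chase $H_2\subseteq[G,G]$ (using $[G,G]\subseteq H_2$ and $H_1\cap H_2=1$); the substance is identical, though your write-up would benefit from pruning the exploratory dead ends.
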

\begin{proof}
We need to show that
\begin{equation}\label{E:XXX1}
[F(\zeta_{\kappa}) : F(\zeta_{\kappa}) \cap \mathbb{Q}^{\mathrm{ab}}] = [F(\zeta_{\kappa}) : \mathbb{Q}(\zeta_{\kappa})].
\end{equation}
Let
$$
G = \mathrm{Gal}(F(\zeta_{\kappa})/\mathbb{Q}) \ \ \text{and} \ \ H = \mathrm{Gal}(F/\mathbb{Q}).
$$
Then the left-hand side of (\ref{E:XXX1}) is equal to the order of the commutator subgroup $[G , G]$, while the right-hand side equals
$$
[F : F \cap \mathbb{Q}(\zeta_{\kappa})] = [F : F \cap \mathbb{Q}^{\mathrm{ab}}] = \vert [H , H] \vert.
$$
Now, the restriction gives an {\it injective} group homomorphism
$$
\psi \colon G \to H \times \mathrm{Gal}(\mathbb{Q}(\zeta_{\kappa}) / \mathbb{Q}).
$$
Since the restriction $G \to H$ is surjective, we obtain that $\psi$ implements an isomorphism between $[G , G]$ and $[H , H] \times \{1\}$. Thus,
$[G , G]$ and $[H , H]$ have the same order, and (\ref{E:XXX1}) follows.

\end{proof}

\noindent {\bf 3. Key statement.} In this subsection we will establish another number-theoretic statement which plays a crucial role in the proof of Theorem 1.1. To formulate it, we need to introduce some additional notations. As above, let $\mu = \vert \mu(k)\vert$ be the number of roots of unity in $k$, let $K$ be the Hilbert $S$-class field of $k$, and let $\tilde{K}$ be the Galois closure of $K$ over $\mathbb{Q}$.  Suppose we are given two finite sets $P$ and $Q$ of rational primes.  Let
$$
\mu' = \mu \cdot \prod_{p \in P} p,
$$
pick an integer $\lambda \geq 1$ which is divisible by $\mu$ and for which $\tilde{K} \cap \mathbb{Q}^{\mathrm{ab}} \subseteq \mathbb{Q}(\zeta_{\lambda})$, and set
$$
\lambda' = \lambda \cdot \prod_{q \in Q} q.
$$

\begin{thm}\label{T:Key}
Let $u \in \mathscr{O}^{\times}$ be a unit of infinite order such that $u \notin \mu(k)_\r(k^{\times})^p$
for every prime $p \in P$, and let $\mathfrak{q}$ be a $\mathbb{Q}$-split prime of $\mathscr{O}$ which is
relatively prime to $\lambda'$.  Then there exist infinitely many principal $\mathbb{Q}$-split primes
$\mathfrak{p} = \pi \mathscr{O}$ of $\mathscr{O}$ with a generator $\pi$ such that

\vskip2mm

\noindent {\rm (1)} \parbox[t]{15cm}{for each $p \in P$, the $p$-primary component of $\phi(\mathfrak{p})/\mu$ divides the $p$-primary component of the order
of $u$ $(\mathrm{mod}\: \mathfrak{p})$;}

\vskip2mm

\noindent {\rm (2)} \parbox[t]{15cm}{$\pi (\mathrm{mod}\: \mathfrak{q}^2)$ generates $(\mathscr{O}/\mathfrak{q}^2)^{\times}$;}

\vskip2mm

\noindent {\rm (3)} \parbox[t]{15cm}{$\mathrm{gcd}(\phi(\mathfrak{p}) , \lambda') = \lambda$.}
\end{thm}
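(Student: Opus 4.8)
The plan is to construct $\pi$ via Proposition \ref{dir}, so that the bulk of the work is to arrange a single Frobenius condition in a suitable large Galois extension of $\mathbb Q$ that forces conditions (1), (2), (3) simultaneously, and then apply Chebotarev. First I would set up the relevant field. Since $\mathfrak q$ is $\mathbb Q$-split, Lemma \ref{prim} tells us $(\mathscr O/\mathfrak q^2)^\times$ is cyclic; pick $c\in\mathscr O$ whose image generates it, and note (by Lemma \ref{prim}(b)) that then $c$ generates $(\mathscr O/\mathfrak q^n)^\times$ for all $n\geq2$. Set $\mathfrak b=\mathfrak q^2$. Fix a generator-datum as in the discussion preceding the theorem: $\mu'=\mu\prod_{p\in P}p$ and $\lambda'=\lambda\prod_{q\in Q}q$ with $\mu\mid\lambda$ and $\tilde K\cap\mathbb Q^{\mathrm{ab}}\subseteq\mathbb Q(\zeta_\lambda)$. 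The compositum I want to work in is
$$
F = \tilde K \cdot K(\mathfrak b) \cdot \mathbb Q\!\left(\zeta_{\lambda'},\ \sqrt[\mu']{u}\right),
$$
a finite Galois extension of $\mathbb Q$ (Galois because it is a compositum of the Galois closures over $\mathbb Q$ of each piece; note $u\in k\subseteq\tilde K\subseteq F$ already, so adjoining one $\mu'$-th root together with $\zeta_{\mu'}$, and then all $\mathbb Q$-conjugates, keeps it finite Galois). I would then choose one element $\tau\in\mathrm{Gal}(F/\mathbb Q)$ carefully and let $\mathfrak p$ range over the (infinitely many, by Chebotarev) primes $p>2$ unramified in $F$, not below $S\cup V(\mathfrak b)\cup V(\lambda')$, with a chosen extension $w$ having $\mathrm{Fr}_{F/\mathbb Q}(w\vert v_p)=\tau$.

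The three conditions translate into requirements on $\tau$ as follows. For (2): I need $\tau\vert K(\mathfrak b)=\theta_{\mathfrak b}(c)$ with the $c$ above; by Proposition \ref{dir}(b) the resulting $\pi$ satisfies $\pi\equiv c\ (\mathrm{mod}\ \mathfrak q^2)$, hence generates $(\mathscr O/\mathfrak q^2)^\times$. For (3): $\phi(\mathfrak p)=N(\mathfrak p)-1=p-1$ since $k_v=\mathbb Q_p$ (Proposition \ref{dir}(a)); I want $\gcd(p-1,\lambda')=\lambda$, which amounts to $p\equiv1\ (\mathrm{mod}\ \lambda)$ together with $p\not\equiv1\ (\mathrm{mod}\ \lambda q)$ for each $q\in Q$ — equivalently a prescribed nontrivial behavior of $\mathrm{Fr}(w\vert v_p)$ on $\mathbb Q(\zeta_{\lambda'})/\mathbb Q$, i.e.\ $\tau$ fixes $\zeta_\lambda$ but not $\zeta_{\lambda q}$ for any $q\in Q$. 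For (1): writing $p^{e_p}$ for the $p$-primary component of $\phi(\mathfrak p)/\mu=(p-1)/\mu$, I must force $u\ (\mathrm{mod}\ \mathfrak p)$ to have order divisible by $p^{e_p}$; since the order of $u\bmod\mathfrak p$ divides $p-1$ and the multiplicative group mod $\mathfrak p$ is cyclic, this is equivalent to saying $u$ is \emph{not} a $p$-th power in $(\mathscr O/\mathfrak p)^\times$ after removing the part of $p-1$ already absorbed in $\mu$ — concretely, $u\notin(\mathscr O/\mathfrak p)^{\times\,p}$ whenever $p^{e_p}>1$. In Frobenius terms over $\mathbb Q(\zeta_{\mu'},\sqrt[\mu']{u})$: choosing $\tau$ so that it moves $\sqrt[p\mu]{u}$ nontrivially for each relevant $p\in P$ guarantees that $p\cdot(p\text{-part of }\mu)$ does not divide the order of $u$'s residue only in the wrong direction — I must be careful here and pick $\tau$ so that the $p$-part of the order of $u\bmod\mathfrak p$ is \emph{exactly} the $p$-part of $\mu$ forced from below is impossible in general, so instead I arrange that it is at least $p^{e_p}$, which is precisely a statement that $\sqrt[p]{u}\notin k_v=\mathbb Q_p$, i.e.\ a condition on $\tau\vert\mathbb Q(\zeta_p,\sqrt[p]{u})$.

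The heart of the argument — and the step I expect to be the main obstacle — is showing that these constraints on $\tau$ are \emph{mutually consistent}, i.e.\ that the subgroup of $\mathrm{Gal}(F/\mathbb Q)$ cut out by the $K(\mathfrak b)$-condition, the $\mathbb Q(\zeta_{\lambda'})$-conditions, and the $\sqrt[\mu']{u}$-conditions is nonempty. Consistency between the $K(\mathfrak b)$-part and the cyclotomic part is handled by Lemma \ref{L:Root}: because $\lambda$ was chosen so that $\tilde K\cap\mathbb Q^{\mathrm{ab}}\subseteq\mathbb Q(\zeta_\lambda)$, the lemma gives $\tilde K(\zeta_\lambda)\cap\mathbb Q^{\mathrm{ab}}=\mathbb Q(\zeta_\lambda)$, so the prescribed action on $K(\mathfrak b)\subseteq\tilde K(\zeta_\lambda)$ does not over-determine the action on $\zeta_{\lambda'}$ beyond $\mathbb Q(\zeta_\lambda)$ — and on $\mathbb Q(\zeta_\lambda)$ both conditions demand "trivial", which is compatible with $\theta_{\mathfrak b}(c)$ being, a priori, nontrivial there only through the Hilbert $S$-class field part, handled by Lemma \ref{dis}(a) (the relevant $c$ is prime to $\mathfrak b$). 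The genuinely new consistency input is between the radical extension $\mathbb Q(\zeta_{\mu'},\sqrt[\mu']{u})$ and the abelian extension $\tilde K(\zeta_{\lambda'})$: this is exactly what Corollary \ref{L:max-ab} provides. Applying it with "$F$" $=\tilde K(\zeta_{\lambda'})$ and the given $u$ (which by hypothesis lies outside $\mu(k)_p(k^\times)^p$ for every $p\in P$) shows
$$
\tilde K(\zeta_{\lambda'})\cap k\!\left(\sqrt[\mu']{u},\zeta_{\mu'}\right)\ \subseteq\ k\!\left(\sqrt[\mu]{u},\zeta_{\mu'}\right),
$$
which means the Galois group of $F/\mathbb Q$ surjects onto the product of $\mathrm{Gal}\big(\tilde K(\zeta_{\lambda'})/\mathbb Q\big)$ and $\mathrm{Gal}\big(k(\sqrt[\mu']{u},\zeta_{\mu'})/k(\sqrt[\mu]{u},\zeta_{\mu'})\big)\cong\prod_{p\in P}\mathbb Z/p$ — so I am free to prescribe the $p$-power-root behavior of $\tau$ independently of everything else. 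Granting all this, I fix $\tau$ with: restriction to $K(\mathfrak b)$ equal to $\theta_{\mathfrak b}(c)$; restriction to $\mathbb Q(\zeta_{\lambda'})$ trivial on $\zeta_\lambda$ and of exact order $q$ on each $\zeta_{\lambda q}$-layer ($q\in Q$); and restriction to each $k(\sqrt[p\mu]{u},\zeta_{p\mu})$ chosen to make $\sqrt[p]{u}\notin\mathbb Q_p$ for the primes $p\in P$ with $e_p\geq1$. Chebotarev then yields infinitely many $\mathfrak p$, and Proposition \ref{dir} converts each into a principal $\mathbb Q$-split $\pi\mathscr O$; unwinding the Frobenius data as above gives (1), (2), (3). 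I would end by remarking that, as in the Remark after Theorem \ref{dit}, the novelty is that $\tau$ lives in the noncommutative group $\mathrm{Gal}(F/\mathbb Q)$, so it is essential that all conditions are phrased as "$\tau$ lies in a prescribed \emph{conjugacy class}" rather than fixing $\tau$ on the nose — each of the conditions above is conjugation-invariant because it only refers to the restriction of $\tau$ to \emph{Galois-over-$\mathbb Q$} subextensions, so this poses no difficulty.
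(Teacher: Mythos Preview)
Your overall architecture matches the paper's, and you correctly identify Corollary~\ref{L:max-ab} as the key to separating the radical extension $k(\sqrt[\mu']{u},\zeta_{\mu'})$ from the abelian data, yielding a free factor $\mathrm{Gal}\big(k(\sqrt[\mu']{u},\zeta_{\mu'})/k(\sqrt[\mu]{u},\zeta_{\mu'})\big)\cong\prod_{p\in P}\mathbb Z/p$. But your handling of condition~(1) has a genuine gap. Your translation ``$\sqrt[p]{u}\notin k_v$'' (equivalently $u\notin(\mathscr O/\mathfrak p)^{\times p}$) is too strong and cannot always be imposed: when $p\mid\mu$, the field $k(\sqrt[p]{u})$ is abelian over $k$ and sits inside $k(\sqrt[\mu]{u},\zeta_{\mu'})$, hence is \emph{not} in your free factor; if, say, $k(\sqrt[p]{u})\subseteq K$, then the requirement $\sigma\vert_K=\mathrm{id}$ (forced by Lemma~\ref{dis}(a)) makes $\sigma$ fix $\sqrt[p]{u}$ and your condition fails. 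The qualifier ``for $p$ with $e_p\geq 1$'' is also circular, since $e_p$ depends on the prime $\mathfrak p$ you have not yet produced. The paper's correct move (Lemma~\ref{L:sigma2}) is to demand instead that $\sigma$ move the specific root $u^{1/p^{d(p)+1}}$, where $p^{d(p)}$ is the $p$-part of $\mu$; by Proposition~\ref{P:PPP2} this root lies outside $k^{\mathrm{ab}}$, so the demand lives entirely in the free factor and is compatible with any choice on $L_1$. Verifying (1) then requires a dichotomy you omit: either $\sigma(\zeta_{p^{d(p)+1}})\neq\zeta_{p^{d(p)+1}}$, whence the $p$-part of $\phi(\mathfrak p)/\mu$ is trivial and (1) is vacuous; or $\sigma$ fixes $\zeta_{p^{d(p)+1}}$, and then moving one $p^{d(p)+1}$-th root of $u$ moves them all, so $u$ is not a $p^{d(p)+1}$-th power mod~$\mathfrak p$ --- which is exactly the condition equivalent to (1).

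A smaller error: the inclusion $K(\mathfrak b)\subseteq\tilde K(\zeta_\lambda)$ you invoke is false, since $K(\mathfrak q^2)/K$ is ramified above $\mathfrak q$ while $\tilde K(\zeta_\lambda)/k$ is not (as $\mathfrak q\nmid\lambda$). The independence of the $K(\mathfrak q^2)$-condition from the cyclotomic one comes instead directly from this ramification observation: $K(\mathfrak q^2)\cap K(\zeta_{\lambda'})=K$ (the paper's Lemma~\ref{L:Isom}(1)). Lemma~\ref{L:Root} is used for a different purpose, namely Lemma~\ref{L:Isom}(2), to guarantee that over $K(\zeta_\lambda)$ one can choose $\sigma_1''$ moving each $\zeta_{q^{e(q)+1}}$ for $q\in Q$ independently.
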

\begin{proof}
As in the proof of Theorem \ref{dit}, we will derive the required assertion by applying Chebotarev's Density Theorem to a specific automorphism of an appropriate finite Galois extension.

Let $K(\mathfrak{q}^2)$ be the abelian extension $K(\mathfrak{b})$ of $k$ introduced in subsection 3.2 for the ideal $\mathfrak{b} = \mathfrak{q}^2$. Set
$$
L_1 = K(\mathfrak{q}^2)(\zeta_{\lambda'}), \ \ \ L_2 = k\left(\zeta_{\mu'} , \sqrt[\mu']{u}\right), \ \ \ L = L_1 L_2 \ \ \ \text{and} \ \ \
\ell = L_1 \cap L_2.
$$
Then
\begin{equation}\label{E:Gal}
\mathrm{Gal}(L/k) = \{\, \sigma = (\sigma_1 , \sigma_2) \in \mathrm{Gal}(L_1/k) \times \mathrm{Gal}(L_2/k) \ \, \vert \ \, \sigma_1 \vert \ell = \sigma_2
\vert \ell \,\}.
\end{equation}
So, to construct $\sigma \in \mathrm{Gal}(L/k)$ that we will need in the argument it is enough to
construct appropriate $\sigma_i \in \mathrm{Gal}(L_i/k)$ for $i = 1, 2$ that have the same
restriction to $\ell$.

\begin{lem}\label{L:Isom}
The restriction maps define the following isomorphisms:

\vskip2mm

\noindent {\rm (1)} $\mathrm{Gal}(L_1/K) \simeq \mathrm{Gal}(K(\mathfrak{q}^2)/K) \times \mathrm{Gal}(K(\zeta_{\lambda'})/K)$;

\vskip2mm

\noindent {\rm (2)} $\displaystyle \mathrm{Gal}(K(\zeta_{\lambda'})/K(\zeta_{\lambda})) \simeq \mathrm{Gal}(\mathbb{Q}(\zeta_{\lambda'})/\mathbb{Q}(\zeta_{\lambda})) \simeq \prod_{q \in Q} \mathrm{Gal}(\mathbb{Q}(\zeta_{q\lambda})/\mathbb{Q}(\zeta_{\lambda}))$.
\end{lem}
\begin{proof}
(1): We need to show that $K(\mathfrak{q}^2) \cap K(\zeta_{\lambda}) = K$. But the Galois extensions $K(\mathfrak{q}^2)/K$ and $K(\zeta_{\lambda})/K$ are respectively totally and unramified at the extensions of $v_{\mathfrak{q}}$ to $K$ (since $\mathfrak{q}$ is prime to $\lambda$), so the required fact is immediate.

\vskip1mm

(2): Since $K(\zeta_{\lambda'}) = K(\zeta_{\lambda}) \cdot \mathbb{Q}(\zeta_{\lambda'})$, we only need to show that
\begin{equation}\label{E:Root}
K(\zeta_{\lambda}) \cap \mathbb{Q}(\zeta_{\lambda'}) = \mathbb{Q}(\zeta_{\lambda}).
\end{equation}
We have
$$
K(\zeta_{\lambda}) \cap \mathbb{Q}(\zeta_{\lambda'}) \subseteq \tilde{K}(\zeta_{\lambda}) \cap \mathbb{Q}^{\mathrm{ab}} = \mathbb{Q}(\zeta_{\lambda})
$$
by Lemma \ref{L:Root}. This proves one inclusion in (\ref{E:Root}); the other inclusion is obvious.
\end{proof}

Since $\mathfrak{q}$ is $\mathbb{Q}$-split, the group $(\mathscr{O}/\mathfrak{q}^2)^{\times}$ is cyclic (Lemma 3.1(a)), and we pick $c \in \mathscr{O}$ so that
$c$ $(\mathrm{mod}\: \mathfrak{q}^2)$ is a generator of this group. We then set
$$
\sigma'_1 = \theta_{\mathfrak{q}^2}(c) \in \mathrm{Gal}(K(\mathfrak{q}^2)/K)
$$
in the notations of subsection 3.2 (cf. Lemma \ref{dis}(a)).
Next, for $q \in Q$, we let $q^{e(q)}$ be the $q$-primary component of  $\lambda$.  Then using the
isomorphism from Lemma \ref{L:Isom}(2), we can find $\sigma''_1 \in \mathrm{Gal}(K(\zeta_{\lambda'})/K)$
such that
\begin{equation}\label{E:Action}
\sigma''_1(\zeta_{\lambda}) = \zeta_{\lambda} \ \ \text{but} \ \ \sigma''_1(\zeta_{q^{e(q) + 1}}) \neq \zeta_{q^{e(q) + 1}} \ \ \text{for all} \ \ q \in Q.
\end{equation}
We then define $\sigma_1 \in \mathrm{Gal}(L_1/K)$ to be the automorphism corresponding to the pair $(\sigma'_1 , \sigma''_1)$ in terms of the isomorphism from Lemma \ref{L:Isom}(1) (in other words, the restrictions of $\sigma_1$ to $K(\mathfrak{q}^2)$ and $K(\zeta_{\lambda'})$ are $\sigma'_1$ and $\sigma''_1$, respectively).

\vskip1mm

We fix a $\mu'$-th root $\sqrt[\mu']{u}$, and for $\nu \vert \mu'$ set $\sqrt[\nu]{u} =
\left( \sqrt[\mu']{u}  \right)^{\mu'/\nu}$ (also denoted $u^{\nu^{-1}}$).
To construct $\sigma_2 \in \mathrm{Gal}(L_2/k)$, we need the following.
\begin{lem}\label{L:sigma2}
Let 
$\sigma_0 \in \mathrm{Gal}(\ell/k)$.
Then there exists $\sigma_2 \in \mathrm{Gal}(L_2/k)$ such that

\vskip2mm

\noindent {\rm (1)} $\sigma_2 \vert \ell = \sigma_0$;

\vskip2mm

\noindent {\rm (2)} \parbox[t]{15cm}{for any $p \in P$, if $p^{d(p)}$ is the $p$-primary component
of
$\mu$ then $$\sigma_2\left( u^{p^{-(d(p) +1)}}\right) \neq u^{p^{-(d(p) +1)}},$$ and consequently
either $\sigma_2(\zeta_{p^{d(p)+1}}) \neq \zeta_{p^{d(p)+1}}$ or $\sigma_2$ acts nontrivially on
\hskip.5pt \underline{all} \hskip.8pt $p^{d(p)+1}$-th roots of $u$.}
\end{lem}

\begin{proof}
Since $L_1/k$ is an abelian extension, we conclude from Corollary \ref{L:max-ab} that
\begin{equation}\label{E:incl}
\ell \subseteq k\left(\sqrt[\mu]{u} , \zeta_{\mu'}\right)\subseteq k^{\rm ab}.
\end{equation}
On the other hand, according to Proposition \ref{P:PPP2}, none of the roots $\sqrt[p\mu]{u}$ for
$p \in P$ lies in $k^{\rm ab}$, and the restriction maps yield an isomorphism $$
\mathrm{Gal}\left(k\big(\sqrt[\mu']{u},\zeta_{\mu'}\big)/k\left(\sqrt[\mu]{u},\zeta_{\mu'}\right)\right)
\rightarrow\prod_{p\in P}\mathrm{Gal}\Big(k\left(\sqrt[p\mu]{u},\zeta_{\mu'}\right)/
k\left(\sqrt[\mu]{u},\zeta_{\mu'}\right)\Big).
$$
It follows that for each $p\in P$ we can find $\tau_p \in \mathrm{Gal}\Big(k\left(\sqrt[\mu']{u} , \zeta_{\mu'}\right)/k\left(\sqrt[\mu]{u} , \zeta_{\mu'}\right)\Big)$ such that
$$
\tau_p\left(u^{p^{-(d(p) +1)}}\right) = \zeta_p \cdot u^{p^{-(d(p) +1)}} \ \ \ \text{and} \ \ \ \tau_p\left(u^{q^{-(d(q) +1)}}\right) = u^{q^{-(d(q) +1)}} \ \ \ \text{for all} \ \ \ q \in P \setminus \{p\}.
$$
Now, let $\tilde{\sigma}_0$ be any extension of $\sigma_0$ to $L_2$. For $p \in P$, define
$$
\chi(p) = \left\{ \begin{array}{ccl} 1 & , & \tilde{\sigma}_0\left(u^{p^{-(d(p) +1)}}\right) = u^{p^{-(d(p) +1)}} \\
0 & , & \tilde{\sigma}_0\left(u^{p^{-(d(p) +1)}}\right) \neq u^{p^{-(d(p) +1)}}
\end{array} \right.
$$
Set
$$
\sigma_2 = \tilde{\sigma}_0\cdot\prod_{p \in P} \tau_p^{\chi(p)}.
$$
In view of (\ref{E:incl}), all $\tau_p$'s act trivially on $\ell$, so
$\sigma_2 \vert \ell = \tilde{\sigma}_0 \vert \ell = \sigma_0$ and (1) holds.
Furthermore, the choice of the $\tau_p$'s and the $\chi(p)$'s implies that (2) also holds.
\end{proof}

\vskip1mm

Continuing the proof of Theorem \ref{T:Key}, we now use $\sigma_1 \in \mathrm{Gal}(L_1/k)$ constructed above, set $\sigma_0 = \sigma_1 \vert \ell$, and using Lemma \ref{L:sigma2} construct $\sigma_2 \in \mathrm{Gal}(L_2/k)$ with the properties described therein. In particular, part (1) of this lemma in conjunction with (\ref{E:Gal}) implies that the pair $(\sigma_1 , \sigma_2)$ corresponds to an automorphism $\sigma \in \mathrm{Gal}(L/k)$. As in the proof of Theorem \ref{dit}, we let $F$ denote the Galois closure of $L$ over $\mathbb{Q}$, and let $\tilde{\sigma} \in \mathrm{Gal}(F/\mathrm{Q})$ be such that $\tilde{\sigma} \vert L = \sigma$. By Chebotarev's Density Theorem, there exist infinitely many rational primes $\pi > 2$ that are relatively prime to $\lambda' \cdot \mu'$ and for which the $\pi$-adic valuation $v_{\pi}$ is unramified in $F$, does not lie below any valuation in $S \cup \{ v_{\mathfrak{q}} \}$, and has an extension $w$ to $F$ such that $\mathrm{Fr}_{F/\Bbb Q}(w \vert v_{\pi}) = \tilde{\sigma}$.  Let $v=w \vert k$, and let $\mathfrak{p} = \mathfrak{p}_v$ be the corresponding prime ideal of $\mathscr{O}$. As in the proof of Theorem \ref{dit}, we see that $\mathfrak{p}$ is $\mathbb{Q}$-split. Furthermore, since $\sigma \vert K(\mathfrak{q}^2) = \theta_{\mathfrak{q}^2}(c)$, we conclude that $\mathfrak{p}$ has a generator $\pi$ such that $\pi \equiv c$ $(\mathrm{mod}\: \mathfrak{q}^2)$ (cf. Proposition \ref{dir}(b)). Then by construction $\pi$ $(\mathrm{mod}\: \mathfrak{q}^2)$ generates $(\mathscr{O}/\mathfrak{q}^2)^\times$, verifying condition (2) of Theorem \ref{T:Key}.

\vskip1mm

To verify condition (1), we fix $p \in P$ and consider two cases. First, suppose $\sigma(\zeta_{p^{d(p) + 1}}) \neq \zeta_{p^{d(p) + 1}}$. Since $p$ is prime to $\mathfrak{p}$, this means that the residue field $\mathscr{O}/\mathfrak{p}$ does not contain an element of order $p^{d(p) + 1}$ (although, since $\mu$ is prime to $\mathfrak{p}$, it does contain an element of order $\mu$, hence of order $p^{d(p)}$).  So, in this case $\phi(\mathfrak{p})/\mu$ is prime to $p$, and there is nothing to prove. Now, suppose that  $\sigma(\zeta_{p^{d(p) + 1}}) = \zeta_{p^{d(p) + 1}}$. Then by construction $\sigma$ acts nontrivially on every $p^{d(p)+1}$-th root of $u$, and therefore the polynomial $X^{p^{d(p)+1}} - u$ has no roots in $k_{v}$. Again, since $p$ is prime to $\mathfrak{p}$, we see from Hensel's lemma that $u$ $(\mathrm{mod}\: \mathfrak{p})$ is not a $p^{d(p)+1}$-th power in the residue field. It follows that the $p$-primary component of the order of $u$ $(\mathrm{mod}\: \mathfrak{p})$ is not less than the $p$-primary component of $\phi(\mathfrak{p})/p^{d(p)}$, and (1) follows.

\vskip1mm

Finally, by construction $\sigma$ acts trivially on $\zeta_{\lambda}$ but nontrivially on $\zeta_{q\lambda}$ for any $q \in Q$. Since $\mathfrak{p}$ is prime to $\lambda'$, we see that the residue field $\mathscr{O}/\mathfrak{p}$ contains an element of order $\lambda$, but does not contain an element of order $q\lambda$ for any $q \in Q$. This means that $\lambda$ $\vert$ $\phi(\mathfrak{p})$ but $\phi(\mathfrak{p})/\lambda$ is relatively prime to each $q \in Q$, which is equivalent to condition (3) of Theorem \ref{T:Key}.
\end{proof}

\section{Proof of Theorem 1.1}
\newcommand{\M}{m}
First, we will introduce some additional notations needed to convert the task of factoring
a given matrix $A\in\rm SL_2(\Cal O)$ as a product of elementary matrices into the task of reducing
the first row of $A$ to $(1,0)$.  Let
$$
\mathscr{R}(\mathscr{O}) = \{ (a , b) \in \mathscr{O}^2 \: \vert \: a\mathscr{O} + b\mathscr{O} =
\mathscr{O} \}
$$
(note that $\mathscr{R}(\mathscr{O})$ is precisely the set of all first rows of matrices $A \in \mathrm{SL}_2(\mathscr{O})$). For $\lambda \in \mathscr{O}$, one defines two permutations, $e_{+}(\lambda)$ and $e_{-}(\lambda)$, of $\mathscr{R}(\mathscr{O})$ given respectively by
$$
(a , b) \mapsto (a , b + \lambda a) \ \ \text{and} \ \ (a , b) \mapsto (a + \lambda b , b).
$$
These permutations will be called {\it elementary transformations} of $\mathscr{R}(\mathscr{O})$. For $(a , b)$, $(c , d) \in \mathscr{R}(\mathscr{O})$ we will write $(a , b) \stackrel{n}{\Rightarrow} (c , d)$ to indicate the fact that $(c , d)$ can be obtained from $(a , b)$ by a sequence of $n$ (equivalently, $\leq n$) elementary transformations. For the convenience of further reference, we will record some simple properties of this relation.

\begin{lem}\label{elem}
Let $(a,b)\in\Cal R(\Cal O)$.

\noindent {\rm (1a)} If $(c,d)\in\Cal R(\Cal O)$ and $(a , b) \stackrel{n}{\Rightarrow} (c , d)$,
then $(c , d) \stackrel{n}{\Rightarrow} (a , b)$.

\vskip1mm

\noindent {\rm (1b)} If $(c,d),(e,f)\in\Cal R(\Cal O)$ are such that
$(a , b) \stackrel{m}{\Rightarrow} (c , d)$ and
$(c , d) \stackrel{n}{\Rightarrow} (e , f)$, then $(a , b) \stackrel{m+n}{\Rightarrow} (e , f)$.

\vskip1mm

\noindent {\rm (2a)} If $c\in\Cal O$ such that $c \equiv a(\mathrm{mod}\: b\mathscr{O})$, then
$(c,b)\in\Cal R(\Cal O)$, and $(a , b) \stackrel{1}{\Rightarrow} (c , b)$.

\vskip1mm

\noindent {\rm (2b)} If $d\in\Cal O$ such that $d \equiv b(\mathrm{mod}\: a\mathscr{O})$, then
$(a,d)\in\Cal R(\Cal O)$, and $(a , b) \stackrel{1}{\Rightarrow} (a , d)$.

\vskip1mm

\noindent {\rm (3a)} If $(a , b) \stackrel{n}{\Rightarrow} (1 , 0)$ then any matrix
$A \in \mathrm{SL}_2(\mathscr{O})$ with the first row $(a , b)$ is a product of
$\leq n + 1$ elementary matrices.

\vskip1mm

\noindent {\rm (3b)} If $(a , b) \stackrel{n}{\Rightarrow} (0 , 1)$ then any matrix
$A\in\mathrm{SL}_2(\Cal O)$ with the second row $(a , b)$ is a product of $\leq n+1$ elementary matrices.

\noindent {\rm (4a)} If $a\in\Cal O^\times$ then $(a,b)\stackrel{2}{\Rightarrow}(0,1)$.

\noindent {\rm (4b)} If $b\in\Cal O^\times$ then $(a,b)\stackrel{2}{\Rightarrow}(1,0)$.
\end{lem}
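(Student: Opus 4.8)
The plan is to verify the items essentially by inspection, the organizing observation being that the elementary transformations $e_{\pm}(\lambda)$ of $\mathscr{R}(\mathscr{O})$ are precisely the effect on a row of right multiplication by the elementary matrices $E_{12}(\lambda)$ and $E_{21}(\lambda)$.

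First I would dispose of the formal properties (1a), (1b), (2a), (2b). The maps $e_+(\lambda)$ and $e_-(\lambda)$ visibly preserve the ideal $a\mathscr{O} + b\mathscr{O}$, so they are well-defined self-maps of $\mathscr{R}(\mathscr{O})$, and each is a bijection with inverse $e_+(-\lambda)$, resp. $e_-(-\lambda)$, of the same type. Then (1b) is immediate by concatenating sequences of transformations, and (1a) follows by reversing a realizing sequence and replacing each transformation by its inverse, which does not change the number of steps. For (2a), the hypothesis $c \equiv a \pmod{b\mathscr{O}}$ means $c = a + \lambda b$ for some $\lambda \in \mathscr{O}$, so $(c,b) = e_-(\lambda)(a,b)$, which therefore lies in $\mathscr{R}(\mathscr{O})$ and is reached in one step; (2b) is the mirror statement obtained from $e_+$.

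The substantive part is (3a)/(3b). Here I would record that right multiplication of a $2 \times 2$ matrix by $E_{21}(\lambda)$ (resp. by $E_{12}(\lambda)$) replaces its first row $(a,b)$ by $(a + \lambda b, b)$ (resp. by $(a, b + \lambda a)$), and acts the same way on the second row. Hence a chain $(a,b) \stackrel{n}{\Rightarrow} (1,0)$ produces elementary matrices $E_1, \dots, E_n$ with $A E_1 \cdots E_n$ having first row $(1,0)$; since a matrix in $\mathrm{SL}_2(\mathscr{O})$ with first row $(1,0)$ is forced to equal $E_{21}(c)$ for some $c \in \mathscr{O}$, one obtains $A = E_{21}(c) E_n^{-1} \cdots E_1^{-1}$, a product of at most $n + 1$ elementary matrices, using that inverses of elementaries are elementary. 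Part (3b) is identical, reading the transformations on the second row and using that a matrix in $\mathrm{SL}_2(\mathscr{O})$ with second row $(0,1)$ equals $E_{12}(c)$.

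Finally, for (4a) I would exhibit the two-step reduction explicitly: with $a \in \mathscr{O}^{\times}$, apply $e_+((1 - b)a^{-1})$ to pass from $(a,b)$ to $(a, 1)$, then $e_-(-a)$ to reach $(0,1)$; part (4b) is symmetric, passing $(a,b) \to (1, b) \to (1,0)$ via $e_-((1-a)b^{-1})$ followed by $e_+(-b)$. I do not anticipate any genuine obstacle; the only point that demands a little care is the bookkeeping in (3a)/(3b) — that the elementary transformations correspond to right (not left) multiplications, that the matrix order matches the order of the transformations, and that passing to inverses is exactly what accounts for the extra factor in the bound $n+1$.
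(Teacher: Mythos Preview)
Your proof is correct and follows essentially the same approach as the paper's own proof: both argue that $e_\pm(\lambda)$ has inverse $e_\pm(-\lambda)$ for (1a), write $c=a+\lambda b$ for (2a), translate the chain of row operations into a product of elementary matrices on the right for (3a), and exhibit an explicit two-step reduction for (4a). If anything, your use of $e_-(\lambda)$ in (2a) corrects a small slip in the paper, which writes $e_+(\lambda)$ there.
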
 \begin{proof}
For (1a), we observe that the inverse of an elementary transformation is again an elementary
transformation given by $[e_{\pm}(\lambda)]^{-1} = e_{\pm}(-\lambda)$, so the required fact
follows.  Part (1b) is obvious.

(Note that (1) implies that the relation between $(a,b)$ and $(c,d)\in\Cal R(O)$
defined by $(a , b) \stackrel{n}{\Rightarrow} (c , d)$ for \emph{some} $n \in \mathbb{N}$
is an equivalence relation.)

\vskip2mm

In (2a), we have $c = a + \lambda b$ with $\lambda \in \mathscr{O}$.  Then $$
c{\Cal O}+b{\Cal O}=a{\Cal O}+b{\Cal O}={\Cal O},
$$
so $(c,a)\in\Cal R({\Cal O})$, and $e_{+}(\lambda)$ takes $(a , b)$ to $(c , b)$.
The argument for (2b) is similar.

\vskip2mm

(3a) Suppose $A \in \mathrm{SL}_2(\mathscr{O})$ has the first row $(a , b)$. Then for
$\lambda \in \mathscr{O}$, the first row of the product $AE_{12}(\lambda)$ is
$(a , b + \lambda a) = e_{+}(\lambda)(a , b)$, and similarly the first row of
$AE_{21}(\lambda)$ is $e_{-}(\lambda)(a , b)$.  So, the fact that
$(a , b) \stackrel{n}{\Rightarrow} (1 , 0)$ implies that there exists a matrix
$U \in \mathrm{SL}_2(\mathscr{O})$ which is a product of $n$ elementary matrices and
is such that $AU$ has the first row $(1 , 0)$. This means that $AU = E_{21}(z)$
for some $z\in \mathscr{O}$, and then $A = E_{21}(z)U^{-1}$ is a product of
$\leq n + 1$ elementary matrices.  The argument for (3b) is similar.

Part (4a) follows since $e_-\big(-$$a\big)e_+\big(a^{-1}(1-b)\big)(a,b)=(0,1)$.  The proof of
(4b) is similar.
\end{proof}

\noindent {\it Remark.} All assertions of Lemma \ref{elem} are valid over any commutative ring $\mathscr{O}$.

\begin{cor}\label{s3} Let $\mathfrak{q}$ be a principal $\Bbb Q$-split prime ideal of $\Cal O$
with generator $q$, and let $z \in \mathscr{O}$ be such that $z(\mathrm{mod}\: \frak q^2)$
generates $(\mathscr{O}/\mathfrak{q}^2)^{\times}$. Given an element of $\Cal R(\Cal O)$ of the
form $(b,q^n)$ with $n\geq 2$, and an integer $t_0$, there exists an integer
$t \geq t_0$ such that $(b,q^n) \stackrel{1}{\Rightarrow} (z^t,q^n)$.
\end{cor}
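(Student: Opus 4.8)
The plan is to reduce the statement to a purely congruence-theoretic assertion: it suffices to produce an integer $t \geq t_0$ with $z^t \equiv b \ (\mathrm{mod}\: q^n\mathscr{O})$, for then Lemma \ref{elem}(2a), applied to the pair $(b, q^n) \in \mathscr{R}(\mathscr{O})$ with $c = z^t$, immediately gives $(b, q^n) \stackrel{1}{\Rightarrow} (z^t, q^n)$ (and also records that $(z^t,q^n)\in\mathscr{R}(\mathscr{O})$).

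First I would note that because $(b, q^n) \in \mathscr{R}(\mathscr{O})$, we have $b\mathscr{O} + q^n\mathscr{O} = \mathscr{O}$, so the image of $b$ in $\mathscr{O}/q^n\mathscr{O} = \mathscr{O}/\mathfrak{q}^n$ lies in $(\mathscr{O}/\mathfrak{q}^n)^{\times}$; here we use that $\mathfrak{q}$ is principal with generator $q$, so that $\mathfrak{q}^n = q^n\mathscr{O}$. Next, since $z \ (\mathrm{mod}\: \mathfrak{q}^2)$ generates the cyclic group $(\mathscr{O}/\mathfrak{q}^2)^{\times}$ by hypothesis, Lemma \ref{prim}(b) shows that $z \ (\mathrm{mod}\: \mathfrak{q}^n)$ generates $(\mathscr{O}/\mathfrak{q}^n)^{\times}$, which is itself cyclic by Lemma \ref{prim}(a). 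Consequently there is an integer $s$ with $z^s \equiv b \ (\mathrm{mod}\: \mathfrak{q}^n)$.

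To upgrade $s$ to an exponent $t \geq t_0$, I would add a sufficiently large multiple of the order $\phi(\mathfrak{q}^n) = \vert(\mathscr{O}/\mathfrak{q}^n)^{\times}\vert$ of this group: for every integer $k$ one has $z^{s + k\phi(\mathfrak{q}^n)} \equiv z^s \equiv b \ (\mathrm{mod}\: \mathfrak{q}^n)$, so choosing $k$ large enough yields $t := s + k\phi(\mathfrak{q}^n) \geq t_0$ with $z^t \equiv b \ (\mathrm{mod}\: q^n\mathscr{O})$. Applying Lemma \ref{elem}(2a) as indicated then finishes the proof.

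There is no substantial obstacle in this argument; the only points requiring attention are the identification $\mathfrak{q}^n = q^n\mathscr{O}$, valid since $\mathfrak{q}$ is principal, and the passage from a generator of $(\mathscr{O}/\mathfrak{q}^2)^{\times}$ to a generator of $(\mathscr{O}/\mathfrak{q}^n)^{\times}$, which is precisely what Lemma \ref{prim}(b) supplies.
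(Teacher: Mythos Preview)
Your proof is correct and follows essentially the same approach as the paper: use Lemma~\ref{prim}(b) to see that $z$ generates $(\mathscr{O}/\mathfrak{q}^n)^{\times}$, write $b \equiv z^s \ (\mathrm{mod}\: \mathfrak{q}^n)$, add a multiple of $\phi(\mathfrak{q}^n)$ to make the exponent at least $t_0$, and conclude via Lemma~\ref{elem}(2a). The paper's version is just a terser rendering of the same argument.
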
\begin{proof}
By Lemma \ref{prim}(b), the element $z(\mathrm{mod}\: \mathfrak{q}^n)$
generates $(\mathscr{O}/\mathfrak{q}^n)^{\times}$. Since $b$ is prime to $\mathfrak{q}$,
one can find $t \in \Bbb Z$ such that $b \equiv z^t(\mathrm{mod}\: \mathfrak{q}^n)$. Adding
to $t$ a suitable multiple of $\phi(\mathfrak{q}^n)$ if necessary, we can assume that
$t \geq t_0$. Our assertion then follows from Lemma \ref{elem}(2a).
\end{proof}

\renewcommand{\q}{d}
\begin{lem}\label{s0}
Suppose we are given $(a,b)\in\Cal R(\Cal O)$, a finite subset $T\subseteq V^k_f$, and an integer
$n\neq 0$.  Then there exists $\alpha\in\Cal O$$_k$ and $r\in\Cal O^\times$ such that
$V(\alpha)\cap T=\emptyset$, and $(a,b)\stackrel{1}{\Rightarrow}(\alpha r^n,b)$.
\end{lem}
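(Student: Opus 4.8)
The plan is to realize the required relation by a \emph{single} application of Lemma~\ref{elem}(2a). Thus it suffices to produce an element $c\in\mathscr O$ with $c\equiv a\ (\mathrm{mod}\ b\mathscr O)$ that admits a factorization $c=\alpha r^n$ with $\alpha\in\mathscr O_k$, $r\in\mathscr O^\times$, and $V(\alpha)\cap T=\emptyset$: for such a $c$ we get $(c,b)\in\mathscr R(\mathscr O)$ and $(a,b)\stackrel{1}{\Rightarrow}(c,b)$ by Lemma~\ref{elem}(2a). We may assume $b\neq 0$ (if $b=0$ then $a\in\mathscr O^\times$; this degenerate case is set aside). In fact one can always take $r=1$, so the task reduces to finding
$$
\alpha\in\mathscr O_k\quad\text{with}\quad \alpha\equiv a\ (\mathrm{mod}\ b\mathscr O)\quad\text{and}\quad v(\alpha)=0\ \text{ for every }v\in T .
$$

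Such an $\alpha$ is furnished by the Chinese Remainder Theorem in $\mathscr O_k$. Write $b\mathscr O=\mathfrak p_1^{m_1}\cdots\mathfrak p_t^{m_t}$ for the prime factorization in $\mathscr O$, and let $\mathfrak P_i$ be the prime of $\mathscr O_k$ with $\mathfrak p_i=\mathfrak P_i\mathscr O$; since each $v_{\mathfrak p_i}\notin S$, inverting $S$ does not change the local ring there, so $\mathscr O/\mathfrak p_i^{m_i}\cong\mathscr O_k/\mathfrak P_i^{m_i}$ and hence $\mathscr O/b\mathscr O\cong\mathscr O_k/\mathfrak b_0$ with $\mathfrak b_0=\mathfrak P_1^{m_1}\cdots\mathfrak P_t^{m_t}$; under this isomorphism the residue of $a$ is a well-defined class. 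Because $a\mathscr O+b\mathscr O=\mathscr O$ we have $v_{\mathfrak p_i}(a)=0$ for all $i$. Let $T_0$ be the finite set of primes of $\mathscr O_k$ that correspond to valuations in $T$ and are distinct from all the $\mathfrak P_i$; these are coprime to $\mathfrak b_0$. By CRT in $\mathscr O_k$ choose $\alpha\in\mathscr O_k$ with $\alpha\equiv a\ (\mathrm{mod}\ \mathfrak b_0)$ and $\alpha\equiv 1\ (\mathrm{mod}\ \mathfrak P)$ for every $\mathfrak P\in T_0$. Then $\alpha\equiv a\ (\mathrm{mod}\ b\mathscr O)$, so $(a,b)\stackrel{1}{\Rightarrow}(\alpha,b)$ by Lemma~\ref{elem}(2a), and $\alpha=\alpha\cdot 1^{n}$. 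Finally, for each $v\in T$ we have $v(\alpha)\geq 0$ as $\alpha\in\mathscr O_k$, and $v(\alpha)=0$ because either $v=v_{\mathfrak p_i}$ for some $i$, giving $v(\alpha)=v(a)=0$, or $v$ corresponds to a prime in $T_0$ — this includes every $v\in S$, whose prime never divides $b\mathscr O$ — in which case $\alpha\equiv 1$ modulo that prime forces $v(\alpha)=0$. Hence $V(\alpha)\cap T=\emptyset$, as required.

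The one point that requires care is the interplay of the two rings $\mathscr O$ and $\mathscr O_k$: the congruence $c\equiv a\ (\mathrm{mod}\ b\mathscr O)$ involves only primes outside $S$, so it places no restriction at the places of $S$, and it is precisely this slack that allows $\alpha$ to be chosen genuinely integral in $\mathscr O_k$ and simultaneously a unit at those $v\in T$ lying in $S$. Beyond this bookkeeping there is no real obstacle — no analytic or density input is needed; $r=1$ works, the flexible formulation with a general $r$ being kept only because $r^n$ is harmless (an $n$-th power) in the later applications.
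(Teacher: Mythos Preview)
Your argument is correct and is genuinely simpler than the paper's. The paper first clears the $S$-denominators of $a$ by choosing $r$ in the subgroup $R\subset\mathscr O^\times$ generated by fixed elements $\pi_v$ (principal generators of $\mathfrak m_v^{h_k}$, one for each $v\in S\setminus V^k_\infty$), so that $a':=ar^{-n}\in\mathscr O_k$; it then invokes strong approximation twice, first to manufacture an auxiliary $b'=\gamma' b\in\mathscr O_k$ coprime to $a'$ in $\mathscr O_k$ with controlled valuations on $S\setminus V^k_\infty$, and second to choose $t\in\mathscr O_k$ so that $\alpha:=a'+tb'$ avoids $T$. Your observation that the congruence $c\equiv a\pmod{b\mathscr O}$ constrains only primes \emph{outside} $S$---so that $\mathscr O_k/\mathfrak b_0\cong\mathscr O/b\mathscr O$ and one may solve it directly in $\mathscr O_k$ by CRT while simultaneously forcing $\alpha\equiv 1$ at the remaining primes of $T$---short-circuits all of this and lets you take $r=1$. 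The extra flexibility ``$r\in\mathscr O^\times$'' in the lemma's statement is thus not actually needed; it is harmless downstream since the sequel (Lemma~\ref{s1}) only uses that $r^n$ is a $\mu$-th power.

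One minor remark: the degenerate case $b=0$ that you set aside is likewise not covered by the paper's argument (the step ``find $s\in R$ with $v(\gamma b s^{-1})=0$ for all $v\in S\setminus V^k_\infty$'' cannot succeed when $\gamma b=0$). This is a lacuna in the statement as literally written, but irrelevant to the application, since $b=0$ forces $a\in\mathscr O^\times$ and the row $(a,0)$ is handled directly by Lemma~\ref{elem}(4a).
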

\begin{proof}
Let $h_k$ be the class number of $k$.  If for each $v \in S\setminus V^k_\infty$ we let
$\frak m$$_v$ denote the maximal ideal of $\Cal O$$_k$ corresponding to $v$, then the ideal
$(\frak m_v)^{h_k}$ is principal, and its generator $\pi_v$ satisfies $v(\pi_v)=h_k$ and
$w(\pi_v)=0$ for all $w\in V_f^k\setminus\{v\}$.  Let $R$ be the subgroup of $k^\times$
generated by $\pi_v$ for $v \in S\setminus V^k_\infty$; note that $R\subset\Cal O^\times$.
We can pick $r\in R$ so that $a' := ar^{-n} \in \mathscr{O}_k$.  We note that since
$a$ and $b$ are relatively prime in $\Cal O$, we have $V(a') \cap V(b) \subset S$.

Now, it follows from the strong approximation theorem that
there exists $\gamma \in\mathscr{O}_k$ such that $$
\begin{array}{rl}v(\gamma b)\geq 0{\rm\ and\ }v(\gamma b)\equiv 0(\mathrm{mod}\ nh_k)&
{\rm for\ all\ }v\in S\setminus V^k_\infty,\\
{\rm and\ }v(\gamma b) = 0&{\rm for\ all\ }v \in V(a') \setminus S.
\end{array}
$$
Then, in particular, we can find
$s \in R$ so that $v(\gamma bs^{-1}) = 0$ for all $v \in S\setminus V^k_\infty$.  Set $$
\gamma' := \gamma s^{-1} \in \mathscr{O}{\rm\ and\ }b' := \gamma'b\in\mathscr{O}_k.
$$
By construction,
\begin{equation}\label{E:777}
v(b') = 0 \ \ \text{for all} \ \ v \in V(a') \cup(S\setminus V^k_\infty),
\end{equation}
implying that $V(a')\cap V(b')=\emptyset$, which means that $a'$ and $b'$ are relatively prime in
$\mathscr{O}_k$.

Again, by the strong approximation theorem we can find $t \in \mathscr{O}_k$ such that $$
v(t) = 0{\rm\ for\ }v \in T \cap V(a'){\rm\ and\ }v(t) > 0{\rm\ for\ }v \in T \setminus V(a').
$$
Set $\alpha=a'+tb'\in\Cal O$$_k$.  Then for $v\in T\cap V(a')$ we have $v(a') > 0$ and $v(tb')=0$
(in view of (\ref{E:777})), while for $v\in T\setminus V(a')$ we have $v(a')=0$ and $v(tb')>0$.
In either case, $$
v(\alpha)=v(a' + tb') = 0{\rm\ for\ all\ }v\in T,
$$
i.e.~$V(\alpha)\cap T=\emptyset$.  On the other hand, $$
a + r^{n}t\gamma' b = r^{n} (a' + tb') = r^{n} \alpha,
$$
which means that $(a,b)\stackrel{1}{\rightarrow}(\alpha r^n,b)$, as required.

\end{proof}

Recall that we let $\mu$ denote the number of roots of unity in $k$.

\renewcommand{\P}{\frak m}
\renewcommand{\q}{q}
\newcommand{\m}{n}
\begin{lem}\label{s1}
Let $(a , b) \in \mathscr{R}(\mathscr{O})$ be such that $a = \alpha \cdot r^{\mu}$ for some $\alpha \in \mathcal{O}_k$ and $r \in \mathscr{O}^{\times}$ where $V(\alpha)$ is disjoint from $S \cup V(\mu)$. Then there exist $a' \in \mathscr{O}$ and infinitely many $\Bbb Q$-split prime principal ideals $\mathfrak{q}$ of $\mathscr{O}$ with a generator $q$ such that for any $m \equiv 1(\mathrm{mod}\: \phi(a'\mathscr{O}))$ we have $(a , b) \stackrel{3}{\Rightarrow} (a' , q^{\mu m})$.
\end{lem}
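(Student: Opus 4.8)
The plan is to factor $(a,b)\stackrel{3}{\Rightarrow}(a',q^{\mu m})$ as a composition of three single elementary transformations, producing all the primes involved with Theorem~\ref{dit} (and the Chebotarev argument behind it), turning congruences into transformations via Lemma~\ref{elem}(2a),(2b), and composing with Lemma~\ref{elem}(1b). Concretely: first I would apply Theorem~\ref{dit} to the pair $(b,a)$ — using $a\mathscr{O}=\alpha\mathscr{O}$ since $r\in\mathscr{O}^{\times}$, and $\gcd(a,b)=1$ — to get a $\mathbb{Q}$-split principal prime $\mathfrak{q}_0=q_0\mathscr{O}$ with $q_0\equiv b\ (\mathrm{mod}\ a\mathscr{O})$, $q_0>0$ in all real completions, and lying over a rational prime not dividing $\mu$; moreover I would arrange the \emph{extra} condition that $\alpha$ be a $\mu$-th power modulo $\mathfrak{q}_0$, i.e.\ that $\mathfrak{q}_0$ split completely in $k(\sqrt[\mu]{\alpha})$ (meaningful because $\mu(k)\subset k$). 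Then $(a,b)\stackrel{1}{\Rightarrow}(a,q_0)$ by Lemma~\ref{elem}(2b). Next I would apply Theorem~\ref{dit} to $(a,q_0)$ — here $\gcd(a,q_0)=\gcd(b,\alpha)=1$ — to get a $\mathbb{Q}$-split principal prime $a'\mathscr{O}$ with generator $a'\equiv a\ (\mathrm{mod}\ q_0\mathscr{O})$ and $a'>0$ in all real completions; an $e_-$ then gives $(a,q_0)\stackrel{1}{\Rightarrow}(a',q_0)$, with $\gcd(a',q_0)=1$ since elementary transformations preserve $\mathscr{R}(\mathscr{O})$.

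The purpose of the extra condition on $q_0$ is to force $q_0$ to be a $\mu$-th power modulo $a'$. I would check this by $\mu$-th power reciprocity over $k$ (available since $\mu(k)\subset k$): the power residue symbol $\bigl(q_0/a'\bigr)_{\mu}$ equals $\bigl(a'/q_0\bigr)_{\mu}$ up to local symbols at the archimedean places and the places above $\mu$, which are trivial because $a'$ and $q_0$ are totally positive and are units above $\mu$; and $\bigl(a'/q_0\bigr)_{\mu}=\bigl(a/q_0\bigr)_{\mu}=\bigl(\alpha r^{\mu}/q_0\bigr)_{\mu}=\bigl(\alpha/q_0\bigr)_{\mu}=1$ because $a'\equiv a=\alpha r^{\mu}\ (\mathrm{mod}\ q_0)$, $r^{\mu}$ is a $\mu$-th power, and $\alpha$ is a $\mu$-th power mod $q_0$ by construction. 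Hence $q_0\in\bigl((\mathscr{O}/a'\mathscr{O})^{\times}\bigr)^{\mu}$.

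For the last transformation I would pick $c\in\mathscr{O}$ with $c^{\mu}\equiv q_0\ (\mathrm{mod}\ a'\mathscr{O})$ — possible by the previous paragraph, and then $\gcd(c,a')=\gcd(q_0,a')=1$ — and apply Theorem~\ref{dit} to $(c,a')$ to obtain infinitely many $\mathbb{Q}$-split principal primes $\mathfrak{q}=q\mathscr{O}$ with a generator $q\equiv c\ (\mathrm{mod}\ a'\mathscr{O})$ and $q>0$ in all real completions. Each such $q$ is prime to $a'$ and satisfies $q^{\mu}\equiv q_0\ (\mathrm{mod}\ a'\mathscr{O})$, so whenever $m\equiv 1\ (\mathrm{mod}\ \phi(a'\mathscr{O}))$ one gets $q^{\mu m}\equiv q^{\mu}\equiv q_0\ (\mathrm{mod}\ a'\mathscr{O})$ and hence $(a',q_0)\stackrel{1}{\Rightarrow}(a',q^{\mu m})$ by Lemma~\ref{elem}(2b). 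Composing the three transformations with Lemma~\ref{elem}(1b) yields $(a,b)\stackrel{3}{\Rightarrow}(a',q^{\mu m})$ uniformly in such $m$, and the infinitely many $\mathfrak{q}$ from this last step are the primes claimed in the statement.

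The one genuinely nontrivial point, which I expect to be the main obstacle, is the extra condition in the first step: that one can choose $q_0$ lying in the prescribed class modulo $a\mathscr{O}$ \emph{and} splitting completely in $k(\sqrt[\mu]{\alpha})$. This reduces to checking that the two prescribed pieces of Frobenius data for $q_0$ are compatible on the intersection field $K(a\mathscr{O})\cap k(\sqrt[\mu]{\alpha})$, after which Chebotarev's theorem — applied, as in the proof of Theorem~\ref{dit}, to the Galois closure over $\mathbb{Q}$ of the relevant compositum — provides infinitely many such $q_0$. It is exactly in this compatibility check that I would use the hypothesis $a=\alpha r^{\mu}$ with $V(\alpha)\cap(S\cup V(\mu))=\emptyset$: it keeps $k(\sqrt[\mu]{\alpha})/k$ tamely ramified away from $\alpha$, makes the local contributions of $\alpha$ at the places of $S$ and above $\mu$ vanish, and — together with the positivity built into Theorem~\ref{dit} — lets the archimedean contributions drop out as well.
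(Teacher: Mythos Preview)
Your plan follows the same three-step skeleton as the paper's proof, but the reciprocity step contains a genuine error. You assert that the local symbols at places $v\mid\mu$ are trivial ``because $a'$ and $q_0$ are totally positive and are units above $\mu$.'' This is false: at a place $v$ lying over a prime $p$ dividing $\mu$, the $\mu$-th power Hilbert symbol is \emph{wild}, and it can be nontrivial on pairs of local units. Indeed, the paper's own proof exploits exactly this nontriviality: it invokes [BMS, (A.17)] to the effect that on units the local symbol at a chosen place $v_i\mid p_i$ already surjects onto the $p_i^{e_i}$-th roots of unity. So your reciprocity computation does not force $\bigl(q_0/a'\bigr)_\mu=1$, and you cannot conclude that $q_0$ is a $\mu$-th power modulo $a'$.

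This is precisely the difficulty the paper's more elaborate construction is designed to overcome. Rather than hoping the wild symbols vanish, the paper \emph{prescribes} the residues of the two new primes $\beta'$ and $\alpha'$ modulo high powers $\hat{\mathfrak m}_v^N$ of each place $v\mid\mu$ (with $N$ large enough that $1+\hat{\mathfrak m}_v^N\subset (k_v^\times)^\mu$), choosing those residues so that the product of the wild local symbols exactly cancels the symbol $\bigl(\alpha,\beta'/\mathfrak b\bigr)_\mu$ at the new prime $\mathfrak b$; only after this cancellation does the reciprocity law (\ref{E:Recipr}) collapse to $\bigl(\beta',\alpha'/\mathfrak a\bigr)_\mu=1$. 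Your proposed shortcut---imposing instead the splitting condition $\bigl(\alpha/\mathfrak q_0\bigr)_\mu=1$---controls the tame symbol at $\mathfrak q_0$ but does nothing at the wild places. Incidentally, the separate compatibility check you flag as ``the main obstacle'' is never actually carried out, and the heuristic you offer for it (that $V(\alpha)\cap V(\mu)=\emptyset$ keeps $k(\sqrt[\mu]{\alpha})/k$ tame away from $\alpha$) is also not right: a Kummer extension can be wildly ramified above $\mu$ even when the radicand is a local unit there.
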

\begin{proof}
The argument below is adapted from the proof of Lemma 3 in [CK1]. It relies on the properties of the power residue symbol (in particular, the power reciprocity law) described in the Appendix on Number Theory in [BMS]. We will work with all $v \in V^k$ (and not only $v \in V^k \setminus S$), so to each such $v$ we associate a symbol (``modulus") $\mathfrak{m}_v$. For $v \in V^k_f$ we will identify $\mathfrak{m}_v$ with the corresponding maximal ideal of $\mathscr{O}_k$ (obviously, $\mathfrak{p}_v = \mathfrak{m}_v \mathscr{O}$ for $v \in V^k \setminus S$); the valuation ideal  and the group of units in the valuation ring $\mathscr{O}_v$ (or $\mathscr{O}_{\mathfrak{m}_v}$) in the completion $k_v$ will be denoted $\hat{\mathfrak{m}}_v$ and $U_v$ respectively. For any divisor $\kappa \vert \mu$, we let
$$
\left( \frac{* , *}{\ \mathfrak{m}_v} \right)_{\!\!\kappa}
$$
be the (bi-multiplicative, skew-symmetric) power residue symbol of degree $\kappa$ on $k_v^{\times}$ (cf. [BMS, p. 85]). We recall that $\displaystyle \left( \frac{x , y}{\ \mathfrak{m}_v} \right)_{\!\!\kappa}\!=1$ if one of the elements $x , y$ is a $\kappa$-th power in $k_v^{\times}$ (in particular, if either $v$ is complex or $v$ is real and one of the elements $x , y$ is positive in $k_v$) or if $v$ is nonarchimedean $\notin V(\kappa)$ and $x , y \in U_v$. It follows that for any $x , y \in k^{\times}$, we have $\displaystyle \left( \frac{x , y}{\ \mathfrak{m}_v}  \right)_{\!\!\kappa} = 1$ for almost all $v \in V^k$. Furthermore, we have the {\it reciprocity law}:
\begin{equation}\label{E:Recipr}
\prod_{v \in V^k} \left( \frac{x , y}{\ \mathfrak{m}_v}   \right)_{\!\!\kappa} = 1.
\end{equation}

Now, let $\mu = p_1^{e_1} \cdots p_n^{e_n}$ be a prime factorization of $\mu$. For each $i = 1, \ldots , n$, pick $v_i \in V(p_i)$. According to [BMS, (A.17)], the values
$$
\left( \frac{x , y}{\ \mathfrak{m}_{v_i}}   \right)_{\!p_i^{e_i}} \ \ \text{for} \ \ x , y \in U_{v_i}
$$
cover all $p_i^{e_i}$-th roots of unity. Thus, we can pick units $u_i , u'_i \in U_{v_i}$ for $i = 1, \ldots , n$ so that $\left( \frac{u_i , u'_i}{\ \mathfrak{m}_{v_i}} \right)_{\!p_i^{e_i}}\!\!$ is a primitive $p_i^{e_i}$-th root of unity. On the other hand, since $u_i , u'_i \in U_{v_i}$ and $v_i(\mu/p_i^{e_i}) = 0$, we have
$$
\left( \frac{u_i , u'_i}{\ \mathfrak{m}_{v_i}} \right)^{p_i^{e_i}}_{\!\mu}\!\! = \left( \frac{u_i , u'_i}{\ \mathfrak{m}_{v_i}} \right)_{\!\mu/p_i^{e_i}}\!\! = 1.
$$
Thus,
$$
\zeta_{p_i^{e_i}} := \left( \frac{u_i , u'_i}{\ \mathfrak{m}_{v_i}} \right)_{\!\mu}\!\!
$$
is a primitive $p_i^{e_i}$-th root of unity for each $i = 1, \ldots , n$ , making
\begin{equation}\label{E:zeta}
\zeta_{\mu} := \prod_{i = 1}^n \left( \frac{u_i , u'_i}{\ \mathfrak{m}_{v_i}} \right)_{\!\mu}
\end{equation}
a primitive $\mu$-th root of unity. Furthermore, it follows from the Inverse Function Theorem or
Hensel's Lemma that we can find an integer $N > 0$ such that
\begin{equation}\label{E:mu1}
1 + \hat{\mathfrak{m}}_v^N \subset {k_v^{\times}}^{\mu} \ \ \text{for all} \ \ v \in V(\mu).
\end{equation}

We now write $b = \beta t^{\mu}$ with $\beta \in \mathscr{O}_k$ and $t \in \mathscr{O}^{\times}$. Since $a , b$ are relatively prime in $\mathscr{O}$, so are $\alpha , \beta$, hence $V(\alpha) \cap V(\beta) \subset S$. On the other hand, by our assumption $V(\alpha)$ is disjoint from $S \cup V(\mu)$, so we conclude that $V(\alpha)$ is disjoint from $V(\beta) \cup V(\mu)$. Applying Theorem 3.3 to the ring $\mathscr{O}_k$ we obtain that there exists $\beta' \in \mathscr{O}_k$ having the following properties:

\vskip2mm

$(1)_1$ $\mathfrak{b} := \beta'\mathscr{O}_k$ is a prime ideal of $\mathscr{O}_k$ and the corresponding valuation $v_{\mathfrak{b}} \notin S \cup V(\mu)$;

\vskip1mm

$(2)_1$ $\beta' > 0$ in every real completion of $k$;

\vskip1mm

$(3)_1$ $\beta' \equiv \beta (\mathrm{mod}\: \alpha \mathscr{O}_k)$;

\vskip1mm

$(4)_1$ for each $i = 1, \ldots , n$, we have

\vskip1mm

\hskip10mm \parbox[t]{10cm}{$\beta' \equiv u'_i (\mathrm{mod}\: \hat{\frak m}^N_{v_i})$, and \vskip1mm $\beta' \equiv 1(\mathrm{mod}\: \hat{\frak m}^N_v)$ for all $v \in V(p_i) \setminus \{ v_i \}$.}

\vskip2mm

\noindent Set $b' = \beta' t^{\mu}$. It is a consequence of $(3)_1$ that $b \equiv b'(\mathrm{mod}\: a\mathscr{O})$, so by Lemma 4.1(2) we have $(a , b) \stackrel{1}{\Rightarrow} (a , b')$. Furthermore, it follows from $(4)_1$ and (\ref{E:mu1}) that $\beta'/u'_i \in {k^{\times}_{v_i}}^{\mu}$, so
$$
\left( \frac{u_i , \beta'}{\ \mathfrak{m}_{v_i}}  \right)_{\!\mu} \!\!= \left( \frac{u_i , u'_i}{\ \mathfrak{m}_{v_i}}  \right)_{\mu} \!\!= \zeta_{\!p_i^{e_i}}.
$$
Since $\zeta_{\mu}$ defined by (\ref{E:zeta}) is a primitive $\mu$-th root of unity, we can find an integer $d > 0$ such that
\begin{equation}\label{E:ZZZ1}
1 = \left( \frac{\alpha , \beta'}{\ \mathfrak{b}}  \right)_{\!\mu}\!\! \cdot \zeta_{\mu}^d = \left( \frac{\alpha , \beta'}{\ \mathfrak{b}}  \right)_{\!\mu}\!\cdot\ \prod_{i = 1}^n \left( \frac{u_i^d , \beta'}{\ \mathfrak{m}_{v_i}} \right)_{\!\mu} .
\end{equation}

By construction, $v_{\mathfrak{b}} \notin V(\alpha) \cup V(\mu)$, so applying Theorem 3.3 one more time, we find $\alpha' \in \mathscr{O}_k$ such that

\vskip2mm

$(1)_2$ $\mathfrak{a} := \alpha' \mathscr{O}_k$ is a prime ideal of $\mathscr{O}_k$ and the corresponding valuation $v_{\mathfrak{a}} \notin S \cup V(\mu)$;

\vskip1mm

$(2)_2$ $\alpha' \equiv \alpha (\mathrm{mod}\: \mathfrak{b})$;

\vskip1mm

$(3)_2$ $\alpha' \equiv u_i^d (\mathrm{mod}\: \hat{\mathfrak{m}}_{v_i}^N)$ for $i = 1, \ldots , n$.

\vskip2mm

\noindent Set $a' = \alpha' r^{\mu}$. Then $a'\mathscr{O} = \alpha' \mathscr{O}$ is a prime ideal of $\mathscr{O}$ and $a' \equiv a (\mathrm{mod}\: b' \mathscr{O})$, so $(a , b') \stackrel{1}{\Rightarrow} (a' , b')$.

Now, we note that  $\displaystyle \left( \frac{\alpha' , \beta'}{\ \mathfrak{m}_v}  \right)_{\!\mu}\!\! = 1$ if either $v \in V^k_{\infty}$ (since $\beta' > 0$ in all real completions of $k$) or $v \in V^k_f \setminus (V(\alpha') \cup V(\beta') \cup V(\mu))$. Since the ideals $\mathfrak{a} = \alpha' \mathscr{O}_k$ and $\mathfrak{b} = \beta' \mathscr{O}_k$ are prime by construction, we have $V(\alpha') = \{ v_{\mathfrak{a}} \}$ and $V(\beta') = \{ v_{\mathfrak{b}} \}$. Besides, it follows from (\ref{E:mu1}) and $(4)_1$ that for $v \in V(p_i) \setminus \{ v_i \}$ we have $\beta' \in {k^{\times}_v}^{\mu}$, and therefore again $\displaystyle \left( \frac{\alpha' , \beta'}{\ \mathfrak{m}_v}  \right)_{\!\mu}\!\! = 1$. Thus, the reciprocity law (\ref{E:Recipr}) for $\alpha' , \beta'$ reduces to the relation
\begin{equation}\label{E:Recipr2}
\left( \frac{\alpha' , \beta'}{\ \mathfrak{a}}  \right)_{\!\mu}\!\! \cdot \left( \frac{\alpha' , \beta'}{\ \mathfrak{b}}  \right)_{\!\mu}\!\cdot\ \prod_{i = 1}^n \left( \frac{\alpha' , \beta'}{\ \mathfrak{m}_{v_i}}  \right)_{\!\mu}\!\! = 1.
\end{equation}
It follows from $(2)_2$ and $(3)_2$ that
$$
 \left( \frac{\alpha' , \beta'}{\ \mathfrak{b}}  \right)_{\!\mu}\!\! =  \left( \frac{\alpha , \beta'}{\ \mathfrak{b}}  \right)_{\!\mu} \ \ \text{and} \ \
  \left( \frac{\alpha' , \beta'}{\ \mathfrak{m}_{v_i}}  \right)_{\!\mu}\!\! =  \left( \frac{u_i^d , \beta'}{\ \mathfrak{m}_{v_i}}  \right)_{\!\mu} \ \ \text{for all} \ \ i = 1, \ldots , n.
$$
Comparing now (\ref{E:ZZZ1}) with (\ref{E:Recipr2}), we find that
$$
\left( \frac{\beta' , \alpha'}{\ \mathfrak{a}} \right)_{\!\mu}\!\! = \left( \frac{\alpha' , \beta'}{\ \mathfrak{a}} \right)_{\!\mu}^{\!\!-1}\!\! = 1.
$$
This implies (cf. [BMS, (A.16)]) that $\beta'$ is a $\mu$-th power modulo $\mathfrak{a}$, i.e. $\beta' \equiv \gamma^{\mu} (\mathrm{mod}\: \mathfrak{a})$ for some $\gamma \in \mathscr{O}_k$. Clearly, the elements $a' = \alpha' r^{\mu}$ and $\gamma t$ are relatively prime in $\mathscr{O}$, so applying Theorem 3.3 to this ring, we find infinitely many $\Bbb Q$-split principal prime ideals $\mathfrak{q}$ of $\mathscr{O}$ having a generator $q \equiv \gamma t (\mathrm{mod}\: a'\mathscr{O})$. Then for any $m \equiv 1(\mathrm{mod}\: \phi(a'\mathscr{O}))$ we have
$$
q^{\mu m} \equiv q^{\mu} \equiv \beta' t^\mu \equiv b' (\mathrm{mod}\: a'\mathscr{O}),
$$
so $(a' , b') \stackrel{1}{\Rightarrow} (a' , q^{\mu m})$. Then by Lemma 4.1(1b), we have $(a , b) \stackrel{3}{\Rightarrow} (a' , q^{\mu m})$, as required.
\end{proof}

The final ingredient that we need for the proof of Theorem 1.1 is the following lemma which uses the notion of the {\it level} $\ell_{\mathfrak{p}}(u)$ of a unit $u$ of infinite order with respect to a $\Bbb Q$-split ideal $\mathfrak{p}$  introduced in \S3.1.
\begin{lem}\label{s4}
Let $\mathfrak{p}$ be a principal $\Bbb Q$-split ideal of $\mathscr{O}$ with a generator $\pi$, and let $u \in \mathscr{O}^{\times}$ be a unit of infinite order. Set $s = \ell_{\mathfrak{p}}(u)$, and let $\lambda$ and $m$ be integers satisfying $\lambda \vert \phi(\mathfrak{p})$ and $m \equiv 0(\mathrm{mod}\: \phi(\mathfrak{p}^s)/\lambda)$. Given an integer $\delta > 0$ dividing $\lambda$ and $b \in \mathscr{O}$ prime to $\pi$ such that $b$ is a $\delta$-th power ($\mathrm{mod}\: \mathfrak{p}$) while $\nu := \lambda/\delta$ divides the order of $u(\mathrm{mod}\: \mathfrak{p})$, for any integer $t \geq s$ there exists an integer $n_t$ for which $$(\pi^t , b^m) \stackrel{1}{\Rightarrow} (\pi^t , u^{n_t}).$$
\end{lem}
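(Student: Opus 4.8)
\emph{Reductions.} Since $\mathfrak p=\pi\mathscr O$ we have $\mathfrak p^t=\pi^t\mathscr O$, so by Lemma~\ref{elem}(2b) it suffices to produce an integer $n_t$ with $u^{n_t}\equiv b^m\ (\mathrm{mod}\: \pi^t\mathscr O)$; the pair $(\pi^t,u^{n_t})$ then automatically lies in $\mathscr R(\mathscr O)$ because $u^{n_t}\in\mathscr O^\times$. Next I would observe that $u\in\mathscr O^\times$ forces $v_{\mathfrak p}(u)=0$, so $u$ is a unit of $\mathscr O_v$ of infinite order with $\mathfrak p$-level $s$; hence, applying Lemma~\ref{c9} with the auxiliary integer $d=1$, it is enough to find $n_s$ with $u^{n_s}\equiv b^m\ (\mathrm{mod}\: \mathfrak p^s)$, since Lemma~\ref{c9} will upgrade this to the congruence modulo $\mathfrak p^t$ for every $t\ge s$.

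\emph{The cyclic group and the order of $u$.} Because $\mathfrak p$ is $\mathbb Q$-split, $G:=(\mathscr O/\mathfrak p^s)^\times$ is cyclic of order $N:=\phi(\mathfrak p^s)=(p-1)p^{s-1}$, where $p$ is the rational prime below $\mathfrak p$ (Lemma~\ref{prim}(a)); moreover $G\cong\mathbb U_p/\mathbb U_p^{(s)}$ under the identification $\mathscr O_v=\mathbb Z_p$. From the defining decomposition $\overline{\langle u\rangle}=C'\times\mathbb U_p^{(s)}$ of the $\mathfrak p$-level, reducing modulo $\mathbb U_p^{(s)}$ shows that the image of $u$ in $G$ generates a subgroup isomorphic to $C'$, and reducing modulo $\mathbb U_p^{(1)}$ shows the same for the image of $u$ in $(\mathscr O/\mathfrak p)^\times$; hence, writing $c$ for the order of $u\ (\mathrm{mod}\: \mathfrak p)$, the order of $u$ in $G$ is also $c=|C'|$. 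Fixing a generator $g$ of $G$ and writing $u\equiv g^{\alpha}$, $b\equiv g^{\beta}\ (\mathrm{mod}\: \mathfrak p^s)$, we have $\gcd(\alpha,N)=N/c$, and the congruence $u^{n_s}\equiv b^m$ is solvable in $n_s$ exactly when $\alpha n_s\equiv m\beta\ (\mathrm{mod}\: N)$ is solvable, i.e.\ exactly when $N/c$ divides $m\beta$.

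\emph{The divisibility.} It remains to check $N/c\mid m\beta$ from the hypotheses. The condition $m\equiv 0\ (\mathrm{mod}\: \phi(\mathfrak p^s)/\lambda)$ gives $N/\lambda\mid m$. Since $\delta\mid\lambda\mid\phi(\mathfrak p)=p-1$ and $b$ is a $\delta$-th power $(\mathrm{mod}\: \mathfrak p)$, pushing $g$ to a generator of the cyclic group $(\mathscr O/\mathfrak p)^\times$ of order $p-1$ yields $\delta\mid\beta$. Finally $\nu=\lambda/\delta$ divides $c$ by assumption. Writing $m=(N/\lambda)m'$, $\beta=\delta\beta'$ and $c=\nu c'$ with $m',\beta',c'\in\mathbb Z$, one computes $m\beta=(N\delta/\lambda)m'\beta'=(N/\nu)m'\beta'$, hence $m\beta\big/(N/c)=c'\,m'\beta'\in\mathbb Z$, i.e.\ $N/c\mid m\beta$. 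This produces the required $n_s$, and the reductions above (Lemma~\ref{c9}, then Lemma~\ref{elem}(2b)) complete the proof.

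\emph{The main obstacle.} The proof is essentially group theory in a cyclic group, so nothing is deep, but two points need care. First, one must read off from the definition of the $\mathfrak p$-level that the order of $u$ modulo $\mathfrak p^s$ is exactly its order modulo $\mathfrak p$ — at the level $s$ itself there is no extra $p$-part, unlike modulo $\mathfrak p^t$ for $t>s$ — and it is precisely this that lets Lemma~\ref{c9} be invoked to pass from $\mathfrak p^s$ to $\mathfrak p^t$ cleanly. Second, one must translate the residue-field hypothesis that $b$ is a $\delta$-th power modulo $\mathfrak p$ into the divisibility $\delta\mid\beta$ of discrete logarithms. Once these are in hand, the remainder is just the bookkeeping with $\lambda,\delta,\nu$ and $c$ carried out above.
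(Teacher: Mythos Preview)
Your proof is correct and follows essentially the same route as the paper: reduce via Lemma~\ref{elem}(2b) to a congruence $u^{n_t}\equiv b^m\ (\mathrm{mod}\:\mathfrak p^t)$, use Lemma~\ref{c9} to reduce to modulus $\mathfrak p^s$, and then show $b^m$ lies in the cyclic subgroup generated by $u$ in $(\mathscr O/\mathfrak p^s)^\times$. The only cosmetic difference is that the paper argues structurally---observing that $(b^m)^\nu\equiv 1\ (\mathrm{mod}\:\mathfrak p^s)$ and that the $\nu$-torsion of $(\mathscr O/\mathfrak p^s)^\times$ is contained in $\langle u\rangle$---whereas you compute with discrete logarithms and verify the divisibility $N/c\mid m\beta$ directly; in particular the paper never needs your (correct) observation that the orders of $u$ modulo $\mathfrak p$ and modulo $\mathfrak p^s$ coincide, only the trivial fact that the latter is a multiple of the former.
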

\begin{proof}
Let $p$ be the rational prime corresponding to $\mathfrak{p}$. Being a divisor of $\lambda$, the integer $\delta$ is relatively prime to $p$. So, the fact that $b$ is a $\delta$-th power $\mathrm{mod}\: \mathfrak{p}$ implies that it is also a $\delta$-th power $\mathrm{mod}\: \mathfrak{p}^s$. On the other hand,  it follows from our assumptions that $\lambda m = \delta \nu m$ is divisible by $\phi(\mathfrak{p}^s)$, and therefore $(b^m)^{\nu} \equiv 1(\mathrm{mod}\: \mathfrak{p}^s)$. But since $\nu$ is prime to $p$, the subgroup of elements in $(\mathscr{O}/\mathfrak{p}^s)^{\times}$ of order dividing $\nu$ is isomorphic to a subgroup of $(\mathscr{O}/\mathfrak{p})^{\times}$, hence cyclic. So, the fact that the order of $u(\mathrm{mod}\: \mathfrak{p})$, and consequently the order $u(\mathrm{mod}\: \mathfrak{p}^s)$, is divisible by $\nu$ implies that every element in $(\mathscr{O}/\mathfrak{p}^s)^{\times}$ whose order divides $\nu$ lies in the subgroup generated by $u(\mathrm{mod}\: \mathfrak{p}^s)$. Thus, $b^m \equiv u^{n_s} (\mathrm{mod}\: \mathfrak{p}^s)$ for some integer $n_s$. Since $\mathfrak{p}$ is $\Bbb Q$-split, we can apply Lemma 3.2 to conclude  that for any $t \geq s$ there exists an integer $n_t$ such that $b^m \equiv u^{n_t}(\mathrm{mod}\: \mathfrak{p}^t)$. Then $(\pi^t , b^m) \stackrel{1}{\Rightarrow} (\pi^t , u^{n_t})$ by Lemma 4.1(2).
\end{proof}
\renewcommand{\q}{q}

\vskip3mm

We will call a unit $u \in \mathscr{O}^{\times}$ {\it fundamental} if it has infinite order and the cyclic group $\langle u \rangle$ is a direct factor of
$\mathscr{O}^{\times}$. Since the group $\mathscr{O}^{\times}$ is finitely generated (Dirichlet's Unit Theorem, cf. [CF, \S2.18]) it always contains a fundamental unit once it is infinite. We note that any fundamental unit has the following property:
$$
u \notin \mu(k)_\r(k^{\times})^p \ \ \text{for any prime} \  \ p.
$$
We are now in a position to give

\vskip1.5mm

\begin{proof}[Proof of Theorem 1.1.] We return to the notations of \S 3.3: we let $K$ denote the Hilbert $S$-class field of $k$, let $\tilde{K}$ be its normal closure over $\Bbb Q$, and pick an integer $\lambda \geq 1$ which is divisible by $\mu$ and for which $\tilde{K} \cap \Bbb Q^{\mathrm{ab}} \subset \Bbb Q(\zeta_{\lambda})$. Furthermore, since $\mathscr{O}^{\times}$ is infinite by assumption, we can find a fundamental unit $u \in \mathscr{O}^{\times}$. By Lemma 4.1(3), it suffices to show that for any $(a , b) \in \mathscr{R}(\mathscr{O})$, we have
\begin{equation}\label{E:ZZZ8}
(a , b) \stackrel{8}{\Rightarrow} (1 , 0).
\end{equation}
First, applying Lemma 4.3 with $T = (S \setminus V^k_{\infty}) \cup V(\mu)$ and $n = \mu$, we see that there exist $\alpha \in \mathscr{O}_k$ and $r \in \mathscr{O}^{\times}$ such that
$$
V(\alpha) \cap (S \cup V(\mu)) = \emptyset \ \ \text{and} \ \ (a , b) \stackrel{1}{\Rightarrow} (\alpha r^{\mu} , b).
$$
Next, applying Lemma 4.4 to the last pair, we find $a' \in \mathscr{O}$ and a $\Bbb Q$-split principal prime ideal $\mathfrak{q}$ such that $v_{\mathfrak{q}} \notin S \cup V(\lambda) \cup V(\phi(a'\mathscr{O}))$ and $(\alpha r^{\mu} , b) \stackrel{3}{\Rightarrow} (a' , q^{\mu m})$ for any $m \equiv 1(\mathrm{mod}\: \phi(a'\mathscr{O})$. Then
\begin{equation}\label{E:ZZZ40}
(a , b) \stackrel{4}{\Rightarrow} (a' , q^{\mu m}) \ \ \text{for any} \ \ m \equiv 1(\mathrm{mod}\: \phi(a'\mathscr{O})).
\end{equation}

To proceed with the argument we will now specify $m$. We let $P$ and $Q$ denote the sets of prime divisors of $\lambda / \mu$ and $\phi(a'\mathscr{O})$, respectively, and define $\lambda'$ and $\mu'$ as in \S3.3; we note that by construction $\mathfrak{q}$ is relatively prime to $\lambda'$.  So, we can apply Theorem 3.7 which yields a $\Bbb Q$-split principal prime ideal $\mathfrak{p} = \pi \mathscr{O}$ so that $v_{\mathfrak{p}} \notin V(\phi(a'\mathscr{O}))$ and conditions (1) - (3) are satisfied.  Let $s = \ell_{\mathfrak{p}}(u)$ be the $\mathfrak{p}$-level of $u$.  Condition (3) implies that
$$
\mathrm{gcd}(\phi(\mathfrak{p})/\lambda , \lambda'/\lambda) = 1 = \mathrm{gcd}(\phi(\mathfrak{p})/\lambda , \phi(a'\mathscr{O}))
$$
since $\lambda'/\lambda$ is the product of all prime divisors of $\phi(a'\mathscr{O})$. It follows that the numbers $\phi(\mathfrak{p}^s)/\lambda$ and $\phi(a'\mathscr{O})$ are relatively prime, and therefore one can pick a positive integer $m$ so that
$$
m \equiv 0(\mathrm{mod}\: \phi(\mathfrak{p}^s)/\lambda) \ \ \text{and} \ \ m \equiv 1(\mathrm{mod}\: \phi(a'\mathscr{O})).
$$
Fix this $m$ for the rest of the proof.

Condition (2) of Theorem 3.7 enables us to apply Corollary 4.2 with $z = \pi$ and $t_0 = s$ to find $t \geq s$ so that $(a' , q^{\mu m}) \stackrel{1}{\Rightarrow} (\pi^t , q^{\mu m})$. Since $P$ consists of all prime divisors of $\lambda / \mu$, condition (1) of Theorem 3.7 implies that $\lambda/\mu$ divides the order of $u(\mathrm{mod}\: \mathfrak{p})$. Now, applying Lemma 4.5 with $\delta = \mu$ and $b = q^{\mu}$, we see that $(\pi^t , q^{\mu m}) \stackrel{1}{\Rightarrow} (\pi^t , u^{n_t})$ for some integer $n_t$. Finally, since $u$ is a unit, we have $(\pi^t , u^{n_t}) \stackrel{2}{\Rightarrow} (1 , 0)$. Combining these computations with (\ref{E:ZZZ40}), we obtain (\ref{E:ZZZ8}), completing the proof.
\end{proof}

\begin{cor}\label{T:Ma}
Assume that the group $\mathscr{O}^{\times}$ is infinite. Then for $n \geq 2$, any matrix $A \in \mathrm{SL}_n(\mathscr{O})$ is a product of $\leq \frac{1}{2}(3n^2 - n) + 4$ elementary matrices.
\end{cor}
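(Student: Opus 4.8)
The plan is an induction on $n$, with Theorem~\ref{T:Main} as the base case and the reduction procedure of Carter--Keller \cite{CK} as the inductive step. For $n=2$ the asserted bound is $\tfrac12(3\cdot 4-2)+4=9$, which is exactly Theorem~\ref{T:Main}. So assume $n\geq 3$ and that the estimate holds for $\mathrm{SL}_{n-1}(\mathscr{O})$. I would fix $A\in\mathrm{SL}_n(\mathscr{O})$ and show that $A$ is a product of at most $3n-2$ elementary matrices and a matrix of the block form $\mathrm{diag}(1,A')$ with $A'\in\mathrm{SL}_{n-1}(\mathscr{O})$. Granting this, one notes that under the embedding $\mathrm{SL}_{n-1}(\mathscr{O})\hookrightarrow\mathrm{SL}_n(\mathscr{O})$, $B\mapsto\mathrm{diag}(1,B)$, every elementary matrix of $\mathrm{SL}_{n-1}(\mathscr{O})$ is sent to an elementary matrix of $\mathrm{SL}_n(\mathscr{O})$; hence the inductive hypothesis writes $\mathrm{diag}(1,A')$ as a product of at most $\tfrac12(3(n-1)^2-(n-1))+4$ elementaries, and the total count is $3n-2+\tfrac12(3(n-1)^2-(n-1))+4=\tfrac12(3n^2-n)+4$. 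Equivalently, summing the per-step costs, $N_n\le N_2+\sum_{j=3}^n(3j-2)=9+\bigl(\tfrac12(3n^2-n)-5\bigr)$.

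The core is therefore the reduction step, which I would carry out exactly as in \cite{CK}; it depends only on the fact that $\mathscr{O}$, being a Dedekind domain (hence of Krull dimension $\le 1$), has Bass stable rank at most $2$, so that for $n\geq 3$ the elementary group $\mathrm{E}_n(\mathscr{O})$ acts transitively on unimodular columns. Concretely: starting from the (unimodular) first column of $A$, one uses column operations together with the stable-rank condition to arrange, in a bounded number of moves, that the $(1,1)$-entry becomes a unit; one then clears the remainder of the first column by row operations and the remainder of the first row by column operations, and finally absorbs the resulting diagonal unit $\varepsilon$ using the standard identity expressing $\mathrm{diag}(\varepsilon,\varepsilon^{-1})$ as a short product of elementaries. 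Keeping track of the number of elementary matrices used at each stage yields the bound $3n-2$ for the passage from $\mathrm{SL}_n$ to $\mathrm{SL}_{n-1}$; this bookkeeping is precisely the one in \cite{CK} and is insensitive to the ground ring, so I would invoke it rather than reproduce it.

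The only place where the hypothesis that $\mathscr{O}^\times$ is infinite enters is the base case $n=2$, where it is needed to apply Theorem~\ref{T:Main}; this is also the sole source of the improvement over \cite{CK}, which must instead use the harder, $k$-dependent $\mathrm{SL}_2$ estimate of Carter--Keller--Paige (cf.\ \cite{DWM}). Note that one cannot push the reduction below $\mathrm{SL}_2$: reducing a unimodular pair to $(1,0)$ is exactly the problem solved by Theorem~\ref{T:Main}. The main (and essentially only) obstacle is thus to verify that the $\mathrm{SL}_n\to\mathrm{SL}_{n-1}$ reduction of \cite{CK} genuinely costs at most $3n-2$ elementaries and uses no number-theoretic input beyond $\mathrm{sr}(\mathscr{O})\le 2$ — i.e.\ that all the real difficulty is concentrated in the $\mathrm{SL}_2$ case. (The excluded case $\mathscr{O}=\mathbb{Z}$, where $\mathscr{O}^\times$ is finite, is handled separately in \cite{Z} and is not claimed here.)
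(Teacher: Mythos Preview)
Your proposal is correct and takes essentially the same approach as the paper. The paper states the reduction in one shot---any $A\in\mathrm{SL}_n(\mathscr{O})$ can be brought to $\mathrm{SL}_2(\mathscr{O})$ by at most $\tfrac12(3n^2-n)-5$ elementaries over a Dedekind ring, citing \cite{CK}---whereas you phrase it inductively with per-step cost $3n-2$; since $\sum_{j=3}^n(3j-2)=\tfrac12(3n^2-n)-5$, these are the same argument, and in both cases the conclusion follows by adding the $9$ from Theorem~\ref{T:Main}.
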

\begin{proof}
For $n = 2$, this is equivalent to Theorem 1.1. Now, let $n \geq 3$. Since the ring $\mathscr{O}$ is Dedekind, it is well-known and easy to show that any $A \in \mathrm{SL}_n(\mathscr{O})$ can be reduced to a matrix in $\mathrm{SL}_2(\mathscr{O})$ by at most $\frac{1}{2}(3n^2 - n) - 5$ elementary operations (cf. [CK1, p. 683]). Now, our result immediately follows from Theorem 1.1.
\end{proof}

\noindent{\it Proof of Corollary \ref{c1}}.
Let $$
e_{+} \colon \alpha \mapsto \left( \begin{array}{cc} 1 & \alpha \\ 0 & 1 \end{array} \right) \ \ \text{and} \ \ e_{-} \colon \alpha \mapsto \left( \begin{array}{cc} 1 & 0 \\ \alpha & 1 \end{array} \right)
$$
be the standard 1-parameter subgroups. Set $U^{\pm} = e_{\pm}(\mathscr{O})$. In view of Theorem 1.1, it is enough to show that each of the subgroups $U^+$ and $U^-$ is contained in a product of finitely many cyclic subgroups of $\mathrm{SL}_2(\mathscr{O})$. Let $h_k$ be the class number of $k$. Then there exists $t \in \mathscr{O}^{\times}$ such that $v(t) = h_k$ for all $v \in S \setminus V_{\infty}^k$ and $v(t) = 0$ for all $v \notin S$. Then $\mathscr{O} = \mathscr{O}_k[1/t]$. So, letting $U_0^{\pm} = e_{\pm}(\mathscr{O}_k)$ and $h = \left( \begin{array}{cl} t & 0 \\ 0 & t^{-1} \end{array} \right)$, we will have the inclusion $$
U^{\pm} \subset \langle h \rangle \, U_0^{\pm} \, \langle h \rangle.
$$
On the other hand, if $w_1, \ldots , w_n$ (where $n = [k : \Bbb Q]$) is a $\Bbb Z$-basis of $\mathscr{O}_k$ then $U_0^{\pm} = \langle e_{\pm}(w_1) \rangle \cdots \langle e_{\pm}(w_n) \rangle$, hence
\begin{equation}\label{E:Incl7}
U^{\pm} \subset \langle h \rangle \langle e_{\pm}(w_1) \rangle \cdots \langle e_{\pm}(w_n) \rangle \langle h \rangle,
\end{equation}
as required.  \hfill$\square$

\begin{rem}
1. Quantitatively, it follows from the proof of Theorem 1.1 that $\mathrm{SL}_2(\mathscr{O}) = U^{-} U^{+} \cdots U^-$ (nine factors), so since the right-hand side of (23) involves $n + 2$ cyclic subgroups, with $\langle h \rangle$ at both ends, we obtain that $\mathrm{SL}_2(\mathscr{O})$ is a product of $9[k : \Bbb Q] + 10$ cyclic subgroups. Also, it  follows from \cite{Vs} that $\mathrm{SL}_2(\mathbb{Z}[1/p])$ is a product of 11 cyclic subgroups.

2. If $S = V^k_{\infty}$, then the proof of Corollary \ref{c1} yields a factorization of $\mathrm{SL}_2(\mathscr{O})$ as a finite product $\langle \gamma_1 \rangle \cdots \langle \gamma_d \rangle$ of cyclic subgroups where all generators $\gamma_i$ are elementary matrices, hence {\it unipotent}. On the contrary, when $S \neq V^k_{\infty}$, the factorization we produce involves some diagonal ({\it semisimple}) matrices. So, it is worth pointing out in the latter case there is no factorization with all $\gamma_i$ unipotent. Indeed, let $v \in S \setminus V^k_{\infty}$ and let $\gamma \in \mathrm{SL}_2(\mathscr{O})$ be unipotent. Then there exists $N = N(\gamma)$ such that for any $a = (a_{ij}) \in \langle \gamma \rangle$ we have $v(a_{ij}) \leq N(\gamma)$ for all $i , j \in \{1 , 2\}$. It follows that if $\mathrm{SL}_2(\mathscr{O}) = \langle \gamma_1 \rangle \cdots \langle \gamma_d \rangle$ where all $\gamma_i$ are unipotent, then there exists $N_0$ such that for any $a = (a_{ij}) \in \mathrm{SL}_2(\mathscr{O})$ we have $v(a_{ij}) \leq N_0$ for $i , j \in \{1 , 2\}$, which is absurd.


\end{rem}

\section{Example}
For a ring of $S$-integers $\mathscr{O}$ in a number field $k$ such that the group of units $\mathscr{O}^{\times}$ is infinite, we let $\nu(\mathscr{O})$ denote the smallest positive integer with the property that every matrix in $\mathrm{SL}_2(\mathscr{O})$ is a product of $\leq \nu(\mathscr{O})$ elementary matrices. So, the result of [Vs] implies that $\nu(\Bbb Z[1/p]) \leq 5$ for any prime $p$, and our Theorem 1.1 yields that $\nu(\mathscr{O}) \leq 9$ for any $\mathscr{O}$ as above. It may be of some interest to determine the exact value of $\nu(\mathscr{O})$ in some situations. In Example 2.1 on p. 289, Vsemirnov claims that the matrix $$
M=\left( \begin{array}{cc} 5 & 12 \\ 12 & 29 \end{array} \right)
$$
is not a product of four elementary matrices in $\mathrm{SL}_2(\Bbb Z[1/p])$ for any $p \equiv 1(\mathrm{mod}\: 29)$, and therefore $\nu(\Bbb Z[1/p]) = 5$ in this case. However this example is faulty because for any prime $p$, in $\mathrm{SL}_2(\Bbb Z[1/p])$ we have $$
M=\left( \begin{array}{cc} 5 & 12 \\ 12 & 29 \end{array} \right) =
\left(\ \left( \begin{array}{cc} 1 & 0 \\ 2 & 1 \end{array} \right) \cdot
\ \left( \begin{array}{cc} 1 & 2 \\ 0 & 1 \end{array} \right)\ \right)^2
$$
However, it turns out that the assertion that $\nu(\Bbb Z[1/p]) = 5$ is valid not only for
$p \equiv 1(\mathrm{mod}\: 29)$ but in fact for all $p > 7$. More precisely, we have the following.

\begin{prop}\label{asdf}
Let $\Cal O=\Bbb Z$$[1/p]$, where $p$ is prime $>7$.  Then not every matrix in ${\rm SL}_2(\Cal O)$
is a product of four elementary matrices.
\end{prop}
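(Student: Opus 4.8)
The plan is to show that the specific matrix $M = \left(\begin{smallmatrix} 5 & 12 \\ 12 & 29\end{smallmatrix}\right)$ — or a similar small matrix — cannot be written as a product $E_{21}(x_1)E_{12}(x_2)E_{21}(x_3)E_{12}(x_4)$ (or with the two families swapped) for $x_i \in \mathbb{Z}[1/p]$ when $p > 7$. Since any product of four elementaries is, after absorbing trivial factors, of one of these two shapes, it suffices to analyze these two normal forms. First I would multiply out a general product of four elementaries symbolically: writing $E_{21}(a)E_{12}(b)E_{21}(c)E_{12}(d)$ one gets an explicit $2\times 2$ matrix whose entries are low-degree polynomials in $a,b,c,d$; setting this equal to $M$ gives four polynomial equations over $\mathbb{Z}[1/p]$. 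The key structural observation is that from the equation for, say, the $(1,1)$ and $(2,1)$ entries one can solve for some of the variables rationally in terms of the others, reducing the system to a single Diophantine-type constraint of the form (something) $= $ (a product of the remaining free variables), where all quantities lie in $\mathbb{Z}[1/p]$.

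The heart of the argument is then a local (valuation-theoretic) obstruction. For a prime $\ell \neq p$, the ring $\mathbb{Z}[1/p]$ sits inside $\mathbb{Z}_\ell$, so any putative factorization over $\mathbb{Z}[1/p]$ is in particular a factorization over $\mathbb{Z}_\ell$ (and over $\mathbb{Q}$, and over $\mathbb{R}$). I would exploit this by reducing the derived scalar equation modulo a well-chosen small prime $\ell$ (the natural candidates, given the entries $5,12,29$ and $\det = 1$, are $\ell = 2, 3, 5, 7$), and showing that the equation has no solution in $\mathbb{Z}_\ell$, or equivalently in $\mathbb{Z}/\ell^k$ for suitable $k$ — unless $p$ happens to equal $\ell$, which is why the hypothesis is $p > 7$ rather than $p \geq 2$. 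Concretely: over $\mathbb{Z}_\ell$, $1/p$ is a unit, so the equation becomes a congruence among ordinary $\ell$-adic integers; one checks that the relevant product-of-free-variables can never hit the required residue class (for instance because a sum of two units or a specific quadratic residue condition fails). One must run this for both normal forms and, if a single prime $\ell$ does not kill both, combine the information from $\ell = 3$ and $\ell = 7$ (say) — one prime handling the $E_{21}E_{12}E_{21}E_{12}$ form and the other the transpose form, with the roles of rows and columns swapped.

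The main obstacle I anticipate is bookkeeping rather than conceptual: correctly identifying, after eliminating variables, the precise residual scalar equation, and then finding a modulus that genuinely obstructs it for \emph{every} prime $p > 7$ simultaneously rather than just for $p$ in some congruence class — this is exactly where Vsemirnov's original Example 2.1 went wrong, since his argument only worked for $p \equiv 1 \pmod{29}$. A secondary subtlety is making sure the elimination step is valid, i.e.\ that the variables one solves for are indeed invertible in $\mathbb{Z}[1/p]$ (or handling the degenerate cases where they are not as separate, easily-dispatched subcases). I would also double-check the boundary: for $p \leq 7$ the obstruction should genuinely disappear, consistent with the fact (used implicitly in the paper's correction of Vsemirnov) that $M$ itself \emph{is} a product of four elementaries over $\mathbb{Z}[1/p]$ for small $p$ — indeed $M = \big(E_{21}(2)E_{12}(2)\big)^2$ works over every $\mathbb{Z}[1/p]$, so the witness matrix in Proposition \ref{asdf} whose non-factorizability I prove must be chosen \emph{different} from $M$, and part of the proof is exhibiting that better witness.
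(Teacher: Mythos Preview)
Your proposal is a sketch of a strategy, not a proof, and it is missing the key idea that makes the argument go through. You plan to pick a \emph{fixed} witness matrix with small integer entries and obstruct its $4$-elementary factorization by working in $\mathbb{Z}_\ell$ for some fixed $\ell\le 7$. But any such obstruction is a statement about $\mathrm{SL}_2(\mathbb{Z}_\ell)$ alone and is completely insensitive to which prime $p$ has been inverted; you have not produced a candidate matrix, nor verified that any matrix in $\mathrm{SL}_2(\mathbb{Z})$ actually fails to be a product of four elementaries over a single $\mathbb{Z}_\ell$. Your own diagnosis of Vsemirnov's error --- that a congruence obstruction may only bite for $p$ in a restricted residue class --- applies with full force to your plan, and you have not indicated how to escape it.

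The paper's argument rests on an idea absent from your outline: choose the witness so that its \emph{diagonal} entries are of the form $1-p^{\alpha}$. In the normal form $E_{12}(a)E_{21}(b)E_{12}(c)E_{21}(d)$ the $(2,2)$-entry equals $1+bc$, so this forces $bc=-p^{\beta}$; since $b,c\in\mathbb{Z}[1/p]$ this pins $b$ and $c$ down to signed powers of $p$, an enormously rigid constraint. One then reduces modulo an integer $r$ with $p\equiv 1\pmod r$ (typically $r=p-1$, not a fixed small prime), so that $b,c\equiv\pm 1$ and the off-diagonal entries of the product are forced to be $\equiv\pm 1\pmod r$. Choosing the off-diagonal entries of the witness to violate this (e.g.\ via a nontrivial factorization of $p-2$ when it is composite, and a separate mod-$30$ argument for the twin-prime case) finishes the proof. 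The witness therefore \emph{depends on $p$}, and the modulus is $p-1$ (or $30$), not one of $2,3,5,7$; the restriction $p>7$ arises from these case distinctions, not from avoiding finitely many local obstructions.
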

In the remainder of this section, unless stated otherwise, we will work with congruences
over the ring $\mathscr{O}$ rather than $\Bbb Z$, so the notation $a \equiv b(\mathrm{mod}\: n)$
means that elements $a , b \in \mathscr{O}$ are congruent modulo the ideal $n\mathscr{O}$.
We begin the proof of the proposition with the following lemma.
\begin{Lem}\label{fdsa}
Let $\Cal O=\Bbb Z$$[1/p]$, where $p$ is any prime, and let $r$ be a positive integer
satisfying $p\equiv 1({\rm mod\ }r)$.  Then any matrix $A\in{\rm SL}_2(\Cal O)$ of the form
\begin{equation}\label{ffff}
A=\left( \begin{array}{cc} 1-p^\alpha & * \\ * & 1-p^\beta \end{array} \right),
\ \alpha,\beta\in\Bbb Z
\end{equation}
which is a product of four elementary matrices, satisfies the congruence $$
A\equiv\pm\left(\begin{array}{cc}0&1\\-1&0\end{array}\right)({\rm mod}\:r).
$$
\end{Lem}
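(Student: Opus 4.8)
The plan is to normalize $A$ into one standard shape and then read off the congruence modulo $r$. Since $E_{12}(x)E_{12}(y)=E_{12}(x+y)$, $E_{21}(x)E_{21}(y)=E_{21}(x+y)$ and $E_{12}(0)=E_{21}(0)=I$, any product of four elementary matrices over $\mathscr{O}$ can be rewritten (after merging consecutive factors of the same type and padding with identity factors at the two ends) in one of the two forms
$$
E_{21}(a)E_{12}(b)E_{21}(c)E_{12}(d)\qquad\text{or}\qquad E_{12}(a)E_{21}(b)E_{12}(c)E_{21}(d),\qquad a,b,c,d\in\mathscr{O}.
$$
Transposition interchanges these two forms (since $E_{12}(x)^{T}=E_{21}(x)$ and $E_{21}(x)^{T}=E_{12}(x)$, while the order of the factors reverses), carries a matrix of the shape (\ref{ffff}) to another matrix of that shape, and sends $\pm J$ to $\mp J$, where $J=\left(\begin{smallmatrix}0&1\\-1&0\end{smallmatrix}\right)$. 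Hence it suffices to treat the first form.

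So assume $A=E_{21}(a)E_{12}(b)E_{21}(c)E_{12}(d)$ with $a,b,c,d\in\mathscr{O}$. A direct computation gives
$$
A=\left(\begin{array}{cc} 1+bc & b+d(1+bc) \\ c+a(1+bc) & * \end{array}\right),
$$
and by hypothesis the top-left entry equals $1-p^{\alpha}$ and the bottom-right entry equals $1-p^{\beta}$. The crucial point is that $1+bc=1-p^{\alpha}$ forces $bc=-p^{\alpha}$, which is a \emph{unit} of $\mathscr{O}=\mathbb{Z}[1/p]$; since $\mathscr{O}^{\times}=\{\,\pm p^{n}:n\in\mathbb{Z}\,\}$, it follows that $b$ and $c$ are themselves units, say $b=\varepsilon p^{m}$ and $c=-\varepsilon p^{\alpha-m}$ with $\varepsilon\in\{\pm1\}$ and $m\in\mathbb{Z}$.

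It remains to reduce modulo $r$. Because $p\equiv1\ (\mathrm{mod}\ r)$ and $p$ is a unit of $\mathscr{O}$, we get $p^{k}\equiv1\ (\mathrm{mod}\ r)$ in $\mathscr{O}$ for every $k\in\mathbb{Z}$; in particular $1-p^{\alpha}$ and $1-p^{\beta}$ both lie in $r\mathscr{O}$. Consequently the two diagonal entries of $A$ vanish mod $r$, while $b+d(1+bc)=b+d(1-p^{\alpha})\equiv b\equiv\varepsilon$ and $c+a(1+bc)=c+a(1-p^{\alpha})\equiv c\equiv-\varepsilon\pmod{r}$, so that $A\equiv\left(\begin{smallmatrix}0&\varepsilon\\-\varepsilon&0\end{smallmatrix}\right)=\varepsilon J\pmod{r}$, which is the assertion. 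No step here is deep; the only points that require a little care are the normalization of an arbitrary product of elementaries into the two standard forms, the treatment of possibly negative exponents $\alpha,\beta$ (so that $1-p^{\alpha}$ lies in $r\mathscr{O}$ and not merely in $r\mathbb{Z}$), and the bookkeeping of the sign $\varepsilon$ so that the two off-diagonal entries emerge with opposite signs.
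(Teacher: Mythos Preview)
Your computation for the form $A=E_{21}(a)E_{12}(b)E_{21}(c)E_{12}(d)$ is correct, but the reduction of the other form to this one via transposition is wrong: transposition does \emph{not} interchange the two forms. Indeed, if $A=E_{12}(a)E_{21}(b)E_{12}(c)E_{21}(d)$ then
\[
A^{T}=E_{21}(d)^{T}E_{12}(c)^{T}E_{21}(b)^{T}E_{12}(a)^{T}=E_{12}(d)E_{21}(c)E_{12}(b)E_{21}(a),
\]
which is again a product starting with $E_{12}$. (The point is that transposition both swaps the types $E_{12}\leftrightarrow E_{21}$ \emph{and} reverses the order; the two effects cancel.) So your appeal to symmetry does not cover the second case.

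The fix is immediate. Either repeat the computation for the second form, noting that for $A=E_{12}(a)E_{21}(b)E_{12}(c)E_{21}(d)$ the $(2,2)$ entry equals $1+bc$, whence $bc=-p^{\beta}$ and the same argument goes through; or, as the paper does, pass to the \emph{inverse}: if $A=E_{21}(a)E_{12}(b)E_{21}(c)E_{12}(d)$ then $A^{-1}=E_{12}(-d)E_{21}(-c)E_{12}(-b)E_{21}(-a)$ genuinely lies in the other form, and $A^{-1}$ is again of the shape~(\ref{ffff}) since its diagonal entries are $1-p^{\beta}$ and $1-p^{\alpha}$. Applying the established case to $A^{-1}$ gives $A^{-1}\equiv\pm J\pmod r$, hence $A\equiv\mp J\pmod r$.
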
\begin{proof}
We note right away that the required congruence is obvious for the diagonal entries, so
we only need to establish it for the off-diagonal ones.  Since $A$ is a product of four
elementary matrices, it admits one of the following presentations:
\begin{equation}\label{E:ZZZZ1}
A=E_{12}(a)E_{21}(b)E_{12}(c)E_{21}(d),
\end{equation} or
\begin{equation}\label{E:ZZZZ2}
A=E_{21}(a)E_{12}(b)E_{21}(c)E_{12}(d),
\end{equation}
with $a,b,c,d\in\Cal O$.

First, suppose we have (\ref{E:ZZZZ1}). Then
$$
A = \left( \begin{array}{cc}  * & * \\ * & 1 + bc \end{array}  \right).
$$
Comparing with (\ref{ffff}), we get $bc = -p^{\beta}$, so $b$ and $c$ are powers of $p$ with opposite signs.
Thus, $A$ looks as follows:$$
A=E_{12}(a)E_{21}(\pm p^\gamma)E_{12}(\mp p^\delta)E_{21}(d)=
\left( \begin{array}{cc} * & a(1-p^{\gamma+\delta})\mp p^\delta \\
d(1-p^{\gamma+\delta})\pm p^\gamma & *\end{array} \right).
$$
Consequently, the required congruences for the off-diagonal entries immediately follow
from the fact that $p \equiv 1(\mathrm{mod}\: r)$, proving the lemma in this case.

Now, suppose we have (\ref{E:ZZZZ2}).  Then $$
A^{-1}=E_{12}(-d)E_{21}(-c)E_{12}(-b)E_{21}(-a),
$$
which means that $A^{-1}$ has a presentation of the form (\ref{E:ZZZZ1}).  Since
the required congruence in this case has already been established, we conclude that $$
A^{-1}\equiv\pm\left( \begin{array}{cc} 0 & -1 \\ 1 & 0 \end{array} \right)({\rm mod}\:r).
$$
But then we have $$
A\equiv\pm\left( \begin{array}{cc} 0 & 1 \\ -1 & 0 \end{array} \right)({\rm mod}\:r),
$$
as required.
\end{proof}

To prove the proposition, we will consider two cases.

\vskip1mm

{\sc Case 1.} {\it $p-2$ is composite.}  Write $p-2 = r_1\cdot r_2$, where $r_1,r_2$ are
positive integers $>1$, and set $r=p-1$.  Then
\begin{equation}\label{E:XXX2}
r_i \not\equiv \pm 1 (\mathrm{mod}\: r) \ \ \text{for} \ \ i = 1, 2.
\end{equation}
Indeed, we can assume that $r_2\leq\sqrt{p-2}$.  If $r_2\equiv\pm 1(\mathrm{mod}\: r)$ then because $r$ is prime to $p$, the number
$r_2 \mp 1$ would be a nonzero integral multiple of $r$. Then $r \leq r_2 + 1$, hence
$$p-2\leq\sqrt{p-2} + 1.$$  But this is impossible since $p>3$.  Thus, $r_2\not\equiv\pm 1({\rm mod}\:r)$.
Since $r_1 \cdot r_2 \equiv -1 (\mathrm{mod}\: r)$, condition (\ref{E:XXX2}) follows.

Now, consider the matrix $$
A=\left( \begin{array}{cc} 1-p & r_1\cdot p \\ r_2 & 1-p \end{array} \right)
$$
One immediately checks that $A\in{\rm SL}_2(\Cal O)$.  At the same time, $A$ is of the form
(\ref{ffff}).  Then Lemma \ref{fdsa} in conjunction with (\ref{E:XXX2}) implies that $A$ is
not a product of four elementary matrices.

\vskip1mm

{\sc Case 2.} {\it $p$ and $p-2$ are both primes.}  In the beginning of this paragraph we will use
congruences in $\Bbb Z$.  Clearly, a prime $> 3$ can only be congruent to
$\pm 1(\mathrm{mod}\: 6{\Bbb Z})$.  Since $p > 5$ and $p-2$ is also prime, in our situation
we must have $p \equiv 1(\mathrm{mod}\: 6{\Bbb Z})$.  Furthermore, since $p>7$, the congruence
$p\equiv 0$ or $2(\mathrm{mod} \: 5{\Bbb Z})$ is impossible.  Thus, in the case at hand we have $$
p\equiv 1,13, \, \text{or} \ 19(\mathrm{mod}\: 30{\Bbb Z}).
$$
If $p\equiv 13(\mathrm{mod}\: 30{\Bbb Z})$, then $p^3\equiv 7(\mathrm{mod}\: 30{\Bbb Z})$, and therefore $p^3-2$
is an integral multiple of $5$.  Set $r=p-1$ and $s=(p^3-2)/5$, and consider the matrix$$
A=\left( \begin{array}{cc} 1-p^3 & 5p^3 \\ s & 1-p^3 \end{array} \right)
$$
Then $A$ is a matrix in ${\rm SL}_2(\Cal O)$ having form (\ref{ffff}).  Note that
$5p^3\equiv 5(\mathrm{mod}\: r)$, which is different from $\pm 1(\mathrm{mod}\: r)$ since $r>6$.
Now, it follows from Lemma \ref{fdsa} that $A$ is not a product of four elementary matrices.

It remains to treat the case where $p \equiv 1$ or $19(\mathrm{mod}\: 30{\Bbb Z})$.
Consider the following matrix:$$
A=\left( \begin{array}{cc} 900 & 53\cdot 899 \\ 17 & 900 \end{array} \right),
$$
and note that $A \in \mathrm{SL}_2(\Bbb Z)$ and $$
A^{-1}=\left( \begin{array}{cc} 900 & -53\cdot 899 \\ -17 & 900 \end{array} \right).
$$

It suffices to show that neither $A$ nor $A^{-1}$ can be written in the form
\begin{equation}\label{E:TTT1}
E_{12}(a)E_{21}(b)E_{12}(c)E_{21}(d) = \left( \begin{array}{cc} * & c + a(1+bc) \\
b + d(1+bc) & (1+bc) \end{array} \right),{\rm\ with\ }a,b,c,d\in\Cal O
\end{equation}
Set $$
t = b + d(1 + bc) \ \ \text{and} \ \ u = c + a(1 + bc).
$$
Assume that either $A$ or $A^{-1}$ is written in the form (\ref{E:TTT1}).
Then $1 + bc = 900$, so $$
b,c\in\{\pm p^n,\,\pm 29p^n,\,\pm 31p^n,\,\pm 899p^n\mid n\in\Bbb Z \}.
$$
On the other hand, we have the following congruences in $\mathscr{O} = {\Bbb Z}[1/p]$:
$$
t \equiv b (\mathrm{mod}\: 30) \ \ \text{and} \ \ u \equiv c(\mathrm{mod}\: 30).
$$
Analyzing the above list of possibilities for $b$ and $c$, we conclude that
each of $t$ and $u$ is $\equiv \pm p^n(\mathrm{mod}\: 30)$ for some integer $n$.
Thus, if $p \equiv 1(\mathrm{mod}\: 30)$ then $t , u \equiv \pm 1(\mathrm{mod}\: 30)$,
and if $p \equiv 19(\mathrm{mod}\: 30)$ then $t , u \equiv \pm 1, \pm 19(\mathrm{mod}\: 30)$.
Since $17 \not\equiv \pm 1, \pm 19 (\mathrm{mod}\: 30)$, we obtain a contradiction in either case.
(We observe that the argument in this last case is inspired by Vsemirnov's argument in his Example 2.1.)

\vskip5mm

\noindent {\small {\bf Acknowledgements.} During the preparation of the final version of the paper the second author visited Princeton University
and the Institute for Advanced Study on a Simons Fellowship. The hospitality of both institutions and the generous support of the Simons Foundation are thankfully acknowledged. The third author gratefully acknowledges fruitful conversations with Maxim Vsemirnov. We thank Dong Quan Ngoc Nguyen for his comments regarding Theorem \ref{dit}. Our most sincere thanks are due to the anonymous referee who has read the paper extremely carefully and offered a number of  corrections and valuable suggestions.}

\vskip3mm

\end{document}